 \def\sloppy{%
  \tolerance 500%
  \emergencystretch 3em%
  \hfuzz .5pt
  \vfuzz\hfuzz}
\newtheorem{cor}{Corollary}[section]
\newtheorem{defn}[cor]{Definition}
\newtheorem{ex}[cor]{Example}
\newtheorem{lem}[cor]{Lemma}
\newtheorem{prop}[cor]{Proposition}
\newtheorem{rem}[cor]{Remark}
\newtheorem{thm}[cor]{Theorem}
\numberwithin{equation}{section}
\newcommand{\C}{\mathbb{C}}
\newcommand{\R}{\mathbb{R}}
\newcommand{\Q}{\mathbb{Q}}
\newcommand{\Z}{\mathbb{Z}}
\newcommand{\TT}{\mathbb{T}}
\newcommand{\m}{\mathrm{m}}
\newcommand{\Rh}{\mathcal{R}}
\newcommand{\K}{\mathcal{K}}
\newcommand{\nnum}{\nonumber}
\newcommand{\QQ}{\tilde{Q}}
\newcommand{\mm}{\mathcal{M}}
\newcommand{\Li}{\mathrm{Li}} 
\newcommand{\re}{\mathop{\mathrm{Re}}} 
\newcommand{\im}{\mathop{\mathrm{Im}}}
\title{Generalized Mahler measures of Laurent polynomials}
\author{Subham Roy}
\address{D\'epartement de math\'ematiques et de statistique, Universit\'e de Montr\'eal, CP 6128, succ. Centre-Ville. Montr\'eal, QC H3C 3J7, Canada}
\email{subham.roy@umontreal.ca}
\begin{document}

\subjclass[2020]{Primary 11R06; Secondary 11G05, 14H52, 31A05.}
\keywords{Mahler measure; elliptic curve; special values of $L$-functions; dilogarithm.}

\maketitle

\begin{abstract}
   Following the work of Lal\'in and Mittal on the Mahler measure over arbitrary tori, we investigate the definition of the generalized Mahler measure for all Laurent polynomials in two variables when they do not vanish on the integration torus. We establish certain relations between the standard Mahler measure and the generalized Mahler measure of such polynomials. Later we focus our investigation on a tempered family of polynomials originally studied by Boyd, namely $Q_{r}(x, y) = x + \frac{1}{x} + y + \frac{1}{y} + r$ with $r \in \C,$ and apply our results to this family. For the $r = 4$ case, we explicitly calculate the generalized Mahler measure of $Q_4$ over any arbitrary torus in terms of special values of the Bloch--Wigner dilogarithm. Finally, we extend our results to the several variable settings.  
\end{abstract}

\section{Introduction}

The (logarithmic) Mahler measure of a non-zero rational function $P \in \C\left(x_1, \dots, x_n\right)^*$ is defined by 
\begin{equation}\label{defmm}
 \m\left(P\right)  = \m(P(x_1,\dots, x_n)):=\frac{1}{\left(2\pi i\right)^n}\int_{\mathbb{T}^n}\log|P\left(x_1, \dots, x_n\right)|\frac{dx_1}{x_1}\cdots \frac{dx_n}{x_n},
\end{equation}
where $\TT^n=\{\left(x_1, \dots, x_n\right)\in \C^* \times \C^* \times \cdots \times \C^* : |x_1|=\cdots=|x_n|=1\}$.

The first appearance of this quantity (for one variable polynomials) can be traced back to Lehmer's work \cite{L1} on Mersenne numbers, and its several variable form first appeared in the work of Mahler \cite{M1} regarding a simpler proof of the Gel'fond-Mahler inequality, and it was later named after him.

In the early $80$'s, Smyth \cite{Sm1} discovered the following remarkable identities: 
\begin{align*}
\m(x+y+1) =& \frac{3\sqrt{3}}{4\pi} L(\chi_{-3}, 2), \\ \m(1 + x + y + z) =& \frac{7}{2\pi^2}\zeta(3),
\end{align*} 
where $L(\chi_{-3}, 2)$ is the Dirichlet $L$-function of the quadratic character $\chi_{-3}$ of conductor $3,$ and  $\zeta(s)$ is the Riemann zeta function (for more details see \cite{Bo1}). These are two of the initial formulas for several variable cases.

Later the work of Boyd \cite{Bo2}, Deninger \cite{Den1}, Rodriguez-Villeags \cite{RV1} and others provided us with interesting connections among Mahler measure, higher regulators, and Be\u\i linson's conjectures. The conjectural formulas to support their work, such as \begin{equation*}
\m(P_k(x, y)) \stackrel{?}{=} r_k L'(E_{N(k)}, 0), \quad \quad r_{k} \in \Q,
\end{equation*} were eventually proved for certain polynomials, due to Rodriguez-Villegas \cite{RV1}, Rogers and Zudilin \cite{RZ1, LR07} et al. Here  $E_{N(k)}$ is an elliptic curve of conductor $N(k)$ associated to $P_k,$ and the question mark stands for a numerical formula that is true for at least 20 decimal places. (See the book of Brunault and Zudilin \cite{BZ20} for more details.)   

In a different direction, Cassaigne and Maillot \cite{CM1} generalized the formula found by Smyth to $\m(a + bx + cy)$ for arbitrary complex constants $a, b, \ \mbox{and} \ c:$

\begin{equation}\label{CM}
    \pi\m(ax + by + c) = \left\{\begin{array}{ll}
 \alpha \log |a| + \beta \log |b| + \gamma \log |c| + D\left(\frac{|a|}{|b|} e^{i\gamma}\right) & \mbox{if} \ \Delta \ \mbox{holds},  \\
 \log \max \{|a|, |b|, |c|\} & \mbox{if} \ \Delta \ \mbox{does not hold}, 
\end{array} \right.
\end{equation}
where $\Delta$ stands for the statement that $|a|, |b|, \ \mbox{and} \ |c|$ are the lengths of the sides of a planar triangle, and in that case, $\alpha, \beta, \ \mbox{and} \ \gamma$ are the angles opposite to the sides of the lengths $|a|, |b|$ and $|c|$ respectively (see Figure \ref{Fig1}). 

\begin{figure}[htb]
\centering
    \begin{tikzpicture}

\node (a1) at (0,1.5) {$\beta$};
\node (a2) at (-2,.3) {$\alpha$};
\node (a3) at (1,.3) {$\gamma$};
\node (a4) at (-.5,-.5) {$|b|$};
\node (a5) at (2, 1) {$|a|$};
\node (a6) at (-2, 1.5) {$|c|$};
\draw (0,2) -- (-3,0) -- (2,0) -- cycle;

\end{tikzpicture}
\caption{Condition $\Delta$ in Cassaigne and Maillot's formula}\label{Fig1}
\end{figure}

We also remark that the constant coefficient can be replaced by a variable without changing the Mahler measure, in the sense that $\m(ax + by + c) = \m(ax + by + cz).$ Additionally, it is immediate to see that Cassaigne and Maillot's result can also be interpreted as \begin{align*}
         \m(ax + by + cz) &= \frac{1}{\left(2\pi i\right)^3}\int_{\TT_{|a|, |b|, |c|}^3} \log|x + y + z|\frac{dx}{x}\frac{dy}{y}\frac{dz}{z},
     \end{align*} 
i.e. the standard Mahler measure of $ax + by + cz$ is same as the integral  of $\log|x + y + z|$ with respect to the Haar measure $\frac{d x}{x}\frac{dy}{y},$ and the integration domain is the torus $\TT_{|a|, |b|, |c|}^3,$ where \[\TT_{|a|, |b|, |c|}^3 = \{(x, y, z) \in \C^* \times \C^* \times \C^*: |x| = |a|, |y| = |b|, |z| = |c|\}.\] This different representation of $\m(ax + by +cz)$ makes \eqref{CM} a generalization of Smyth's result. This leads to the following definition.

\begin{defn}\label{def3}
The \textbf{generalized Mahler measure} of a non-zero rational function $P \in \C(x_1, \dots, x_n)^*$ is defined as \[ \m_{\mathfrak{a}}(P) = \m_{a_1, \dots, a_n} (P(x_1, \dots, x_n)):=\frac{1}{\left(2\pi i\right)^n}\int_{\TT_{\mathfrak{a}}^n} \log|P\left(x_1, \dots, x_n\right)|\frac{dx_1}{x_1}\cdots \frac{dx_n}{x_n},\] where $\mathfrak{a} = (a_1, \dots, a_n) \in (\R_{> 0})^n$ and \[\TT_{\mathfrak{a}}^n := \{(x_1, \dots, x_n) \in \C^* \times \C^* \times \cdots \times \C^* : |x_1| = a_1, \dots, |x_n| = a_n \}.\]
\end{defn}

Lal\'in and Mittal \cite{LM18} explored this definition over  $\TT_{q^2, q}^2$ and $\TT_{q, q}^2$ to obtain relations between some polynomials mentioned in Boyd's paper \cite{Bo2}, namely 
\[\begin{array}{ll}
 R_{-2}(x, y)&:=\displaystyle (1+x)(1+y)(x+y)+ 2xy,\\
 S_{2, -1}(x, y)&:=\displaystyle y^2 + 2xy - x^3 + x,
\end{array}\]
for some values of $q \in \R_{> 0}.$ They have simultaneously evaluated $\m_{q^2, q}(R_{-2})$ and $\m_{q, q}(S_{2, -1})$ in terms of $\log q$ and special values of $L$-functions when each of them does not vanish on the respective integration torus. In particular, they established a relation between the standard Mahler measure and the generalized Mahler measure. In this article, we provide a way to obtain such relations for all Laurent polynomials. Note that any Laurent polynomial can be expressed as \[Q_r(x, y) = r - Q(x, y) \in \C[x^{\pm}, y^{\pm}],\] where $r \in \C,$ and $Q$ has no constant term. Therefore, 

This project started with a specific family of Boyd's polynomials, namely \begin{equation}\label{firsteq*}
\left\{x + \frac{1}{x} + y + \frac{1}{y} +  t: t \in \C \right\}.
\end{equation} An extension of the methods in \cite{RV1} and \cite{Ber1} led us to an interesting fact: for an arbitrarily fixed $(a, b) \in \R_{>0}^2,$ there exists a large set of $t \in \C$ such that Mahler measure of these polynomials remains the same irrespective of deforming the integration torus from $\TT^2$ ($= \TT_{1, 1}^2$) to $\TT_{a, b}^2.$ In fact, we found that this method can be extended to all Laurent polynomials in $n$-variable (where $n \geq 2$) when they do not vanish on the integration torus.   

Let $P_k(x_1, \dots, x_n) \in \C[x_1^{\pm}, \dots, x_n^{\pm}]$ be a Laurent polynomial in $n$-variable such that \[P_k := P_k(x_1, \dots, x_n) = k - P(x_1, \dots, x_n),\] where $P$ has no constant term. Let $\TT_{\mathfrak{a}}^n$ be the integration torus in the definition of $\m_{\mathfrak{a}}(P_k),$ where $\mathfrak{a} = (a_1, \dots, a_n).$ 

For $a_1, \dots, a_n > 0,$ let $\K_{\mathfrak{a}}$ be the image of the map \[p:\TT_{\mathfrak{a}}^n \rightarrow \C \quad \mbox{defined by} \quad (x_1, \dots, x_n) \mapsto P(x_1, \dots, x_n).\] Let $\nu_{\mathfrak{a}, k}^j$ be the difference between the number of zeroes (counting multiplicities) of $P_{k}(a_1, \dots, a_{j-1}, x_j, a_{j+1}, \dots, a_n)$ inside the circle $\TT_{a_j}^1,$ denoted by $Z_{\mathfrak{a}, k}^j,$  and the order of the pole of $P_{k}(a_1, \dots, a_{j-1}, x_j, a_{j+1}, \dots, a_n)$ at $x_j = 0,$ denoted by $P_{\mathfrak{a}, k}^j.$ In other words, \[\nu_{\mathfrak{a}, k}^j = Z_{\mathfrak{a}, k}^j - P_{\mathfrak{a}, k}^j.\] Then, we have the following theorem. 

\begin{thm}\label{nvarthm}
 Let $\mathfrak{a} = (a_1, \dots, a_n) \in (\R_{>0})^n.$ Let $P_k(x_1, \dots, x_n) = k - P(x_1, \dots, x_n) \in \C[x_1^{\pm}, \dots, x_n^{\pm}],$ such that $P$ has no constant term. Denote $U_{\mathfrak{a}}$ the unbounded open connected component of $\C \setminus \K_{\mathfrak{a}}$ containing some neighbourhood of $k = \infty.$ Then, for $k \in U_{\mathfrak{a}} \cap U_{\mathfrak{1}},$ \begin{equation*}
\m_{\mathfrak{a}}(P_k) = \m(P_k) +  \sum_{j = 1}^n \nu_{\mathfrak{a}, k}^j \log a_j, 
\end{equation*} where $\nu_{\mathfrak{a}, k}^j$ is defined as above, and $\m_{\mathfrak{1}}(P_k) = \m(P_k).$ Moreover, for $k \in  U_{\mathfrak{a}} \cap U_{\mathfrak{1}}$ and $j = 1, \dots, n,$ $\nu_{\mathfrak{a}, k}^j$ only depends on $\mathfrak{a}.$ 
 \end{thm}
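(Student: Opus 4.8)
The plan is to mimic, essentially verbatim, the argument used to prove Theorem~\ref{mainthm}, replacing the two-variable torus $\TT_{a,b}^2$ by the $n$-torus $\TT_{\mathfrak{a}}^n$ and the single parameter pair $(a,b)$ by the tuple $\mathfrak{a}=(a_1,\dots,a_n)$. The skeleton is already laid out in the excerpt above; here I organize it into a self-contained proof. First I would establish the analogue of Lemma~\ref{lemeq}: for $|k|>K_{\mathfrak{a},\mathfrak{1}}$ the Taylor coefficients $a_{m,\mathfrak{a}}$ coincide with $a_{m,\mathfrak{1}}$, because the constant term of $P(a_1x_1',\dots,a_nx_n')^m$ collects exactly the monomials of total degree $0$, which are invariant under scaling each $x_i$ by $a_i$; hence $\frac{d\tilde{\m}_{\mathfrak{a}}}{dk}=\frac{d\tilde{\m}_{\mathfrak{1}}}{dk}$ on $\{|k|>K_{\mathfrak{a},\mathfrak{1}}\}$. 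Integrating and taking real parts gives \eqref{nvarequality}, namely $\m_{\mathfrak{a}}(P_k)=\m_{\mathfrak{1}}(P_k)+g(\mathfrak{a})$ on that region, and the argument that $\mathcal{F}_{\mathfrak{a}}(k)=\frac{1}{(2\pi i)^n}\int_{\TT_{\mathfrak{a}}^n}\frac{1}{k-P}\frac{dx_1}{x_1}\cdots\frac{dx_n}{x_n}$ is bounded (since $P$ does not vanish on $\TT_{\mathfrak{a}}^n$ for $k\notin\K_{\mathfrak{a}}$) and therefore holomorphic on $U_{\mathfrak{a}}$ lets harmonicity of the Mahler measure push the equality out to all of $O_{\mathfrak{a}}=U_{\mathfrak{a}}\cap U_{\mathfrak{1}}$.

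Second, I would compute $g$ by differentiating under the integral sign. For a fixed large $k=K$ and $\mathfrak{a}$ in a small polyannulus neighbourhood (so that $\log P_K$ is a well-defined holomorphic branch on a polyannulus containing $\TT_{\mathfrak{a}}^n$), one writes $\tilde{\m}_{\mathfrak{a}}(P_K)=\frac{1}{(2\pi i)^n}\int_{\TT_{\mathfrak{a}}^n}\log P_K \,\frac{dx_1}{x_1}\cdots\frac{dx_n}{x_n}$ and differentiates with respect to $a_j$. Exactly as in \eqref{partial1}, using $x_j=a_je^{i\theta_j}$ with $\theta_j$ independent of $a_j$, one gets
\begin{equation*}
a_j\frac{\partial \tilde{\m}_{\mathfrak{a}}(P_K)}{\partial a_j}=\frac{1}{(2\pi i)^n}\int_{|x_1|=a_1,\dots,\widehat{|x_j|=a_j},\dots,|x_n|=a_n}\left(\int_{|x_j|=a_j}\frac{\partial_{x_j}P_K}{P_K}\,dx_j\right)\frac{dx_1}{x_1}\cdots\widehat{\frac{dx_j}{x_j}}\cdots\frac{dx_n}{x_n}.
\end{equation*}
The inner integral is $2\pi i\big(Z^j_{\mathfrak{a},K}(\hat{x})-P^j_{\mathfrak{a},K}(\hat{x})\big)$, an integer by the argument principle, where $\hat x=(x_1,\dots,\widehat{x_j},\dots,x_n)$ runs over the remaining circles. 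The $n$-variable analogue of Proposition~\ref{allin1} — obtained by the identical Puiseux-series/continuity argument applied to the one-variable polynomial $P_K(x_1,\dots,x_{j-1},x_j,x_{j+1},\dots,x_n)$ in $x_j$ with the other coordinates on their circles, these coordinates playing collectively the role of the single parameter $x$ in the two-variable proof — shows this integer is constant in $\hat x$ as $\hat x$ ranges over the connected set $\TT_{a_1}^1\times\cdots\times\widehat{\TT_{a_j}^1}\times\cdots\times\TT_{a_n}^1$; call it $\nu^j_{\mathfrak{a},K}$. Thus $a_j\frac{\partial\tilde{\m}_{\mathfrak{a}}(P_K)}{\partial a_j}=\nu^j_{\mathfrak{a},K}$, this integer is locally constant in $\mathfrak{a}$, integrating in each $a_j$ successively yields $\m_{\mathfrak{a}}(P_K)=\m_{\mathfrak{1}}(P_K)+\sum_{j=1}^n\nu^j_{\mathfrak{a},K}\log a_j+c$ with $c$ independent of $\mathfrak{a}$ and $K$, and evaluating at $\mathfrak{a}=\mathfrak{1}$ forces $c=0$. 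Comparing with \eqref{nvarequality} identifies $g(\mathfrak{a})=\sum_{j=1}^n\nu^j_{\mathfrak{a},K}\log a_j$ for $k\in O_{\mathfrak{a}}$, and since $g$ does not depend on $k$ while $\nu^j_{\mathfrak{a},k}$ (now attached to a generic $k\in O_{\mathfrak{a}}$ via the same argument-principle computation) is locally constant in $k$, we get $\nu^j_{\mathfrak{a},k}=\nu^j_{\mathfrak{a},K}$ throughout $O_{\mathfrak{a}}$, completing the proof.

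The main obstacle — really the only nontrivial point — is the $n$-variable analogue of Proposition~\ref{allin1}: showing that the number of zeros of $P_k(x_1,\dots,x_n)$ inside $\TT_{a_j}^1$, as a function of the other coordinates on their respective circles, is constant on the whole product of circles. In the two-variable case the "parameter space" was a single circle $\TT_a^1$, which is connected, and the proof used that the algebraic functions $y_{j,r}(x)$ have only finitely many critical points (branch points or poles) and admit convergent Puiseux expansions near each, so $|y_{j,r}|$ is continuous everywhere on $\TT_a^1$ and the root-count function is locally constant, hence constant by connectedness. The same mechanism works here: fix $j$, regard $P_k$ as a polynomial in $x_j$ over $\overline{\C(x_1,\dots,\widehat{x_j},\dots,x_n)}$; its roots $x_j=\xi_\ell(\hat x)$ are algebraic functions of $\hat x$, the set where the resultant of $P_k$ and $\partial_{x_j}P_k$ (or the leading coefficient) vanishes is a proper analytic subvariety, and along the compact connected real-analytic set $T:=\TT_{a_1}^1\times\cdots\times\widehat{\TT_{a_j}^1}\times\cdots\times\TT_{a_n}^1$ one checks local constancy of the root-count near every point — near a point avoiding that subvariety the $\xi_\ell$ are locally holomorphic; near a point of the subvariety one uses local Puiseux/Weierstrass-type expansions in a suitable local parameter exactly as in cases \ref{item:critical1}--\ref{item:critical2} of Section~3 — and the hypothesis $k\notin\K_{\mathfrak{a}}$ guarantees $|\xi_\ell(\hat x)|\neq a_j$ on all of $T$, so no root can cross the circle $\TT_{a_j}^1$. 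Since $T$ is connected and the root-count takes values in the discrete set $\Z$, it is constant. I would state this as a proposition (the $n$-variable counterpart of Proposition~\ref{allin1}) and remark that its proof is word-for-word that of Proposition~\ref{allin1}, with the single circle $\TT_a^1$ replaced by the connected product $T$; everything else in the argument is a routine multi-variable bookkeeping of the two-variable steps already carried out in the excerpt.
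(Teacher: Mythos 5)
Your proposal is correct and follows essentially the same route as the paper: establish $a_{m,\mathfrak{a}}=a_{m,\mathfrak{1}}$ so that $\frac{d\tilde{\m}_{\mathfrak{a}}}{dk}=\frac{d\tilde{\m}_{\mathfrak{1}}}{dk}$ for $|k|>K_{\mathfrak{a},\mathfrak{1}}$, extend by harmonicity to $U_{\mathfrak{a}}\cap U_{\mathfrak{1}}$, and then identify $g(\mathfrak{a})$ by differentiating in each $a_j$ and invoking the $n$-variable analogue of Proposition~\ref{allin1}. The only (immaterial) difference is that the paper asserts that analogue via induction on $n$, whereas you prove it directly from connectedness of the product of circles — which is, if anything, the cleaner way to fill in the detail the paper omits.
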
  

 For $|k|$ large enough, the above relation between the standard Mahler measure and the generalized Mahler measure of $P_k$ can be obtained by first expanding $\log \left(1 - \frac{P}{k}\right)$ in a convergent series, and then integrating each term individually. We should mention here that, in order to obtain a convergent series expansion of the logarithm, the above procedure is restricted to a smaller subregion contained in the unbounded region of $\C \setminus \K_{\mathfrak{a}}.$ Theorem \ref{nvarthm} establishes this equality for a larger set, and since Mahler measure is the real part of an analytic function \cite{RV1} (in other words, since it is harmonic), this property of Mahler measure implies that the equality holds for all $k$ in the unbounded open connected component of $\C \setminus \K_{\mathfrak{a}}.$ In particular, we note that the region $U_{\mathfrak{a}} \cap U_{\mathfrak{1}}$ contains a neighborhood of $k = \infty,$ namely the region \[\left\{k \in \C : |k| > \max\left\{\max_{(x_1,\dots, x_n) \in \TT_{\mathfrak{a}}^n} |P\left(x_1,\dots, x_n\right)|, \max_{(x_1,\dots, x_n) \in \TT^n} |P\left(x_1,\dots, x_n\right)|\right\}\right\}.\] Indeed note that for all those $k,$ $\log \left(1 - \frac{P}{k}\right)$ is well defined and can be expanded in a convergent series, as mentioned above. Also, note that the region is therefore unbounded, and its complement is contained in \[\left\{k \in \C : |k| \leq \max \left\{\max_{(x_1,\dots, x_n) \in \TT_{\mathfrak{a}}^n} |P\left(x_1,\dots, x_n\right)|, \max_{(x_1,\dots, x_n) \in \TT^n} |P\left(x_1,\dots, x_n\right)|\right\}\right\},\] which is closed and bounded.

Let $Q(x, y)$ be a Laurent polynomial in $\C[x^{\pm}, y^{\pm}]$ with no constant term, and define the family of Laurent polynomials $\{Q_r(x, y): r \in \C\}$ associated to $Q$ as \[Q_r(x, y) = r - Q(x, y) \in \C[x^{\pm}, y^{\pm}].\] For $a, b > 0,$ let $\Rh_{a, b}$ be the image of the map \begin{equation}\label{Rh} 
q:\TT_{a, b}^2 \longrightarrow \C, \quad \mbox{defined by} \quad (x, y) \mapsto Q(x, y).
 \end{equation} 
Then, as a corollary to Theorem \ref{nvarthm}, we have the following theorem in $2$-variable.

\begin{thm}\label{mainthm}
 Let $a$ and $b$ be positive real numbers, and denote by $U_{a, b}$ the unbounded open connected component of $\C \setminus \Rh_{a, b}$ containing some neighborhood of $r = \infty.$ Then, for $r \in U_{a, b} \cap U_{1, 1},$ \begin{equation*}
\m_{a, b}(Q_r) = \m(Q_r) +  \nu_{a, b, r}^1 \log a + \nu_{a, b, r}^2 \log b, 
\end{equation*}  where $\nu_{a, b, r}^1$ is the difference between the number of zeroes (denoted by $Z_{a, b, r}^1$) and the number of poles (denoted by $P_{a, b, r}^1$)  of $Q_r(x, b)$ inside the circle $|x| = a,$  defined, by \begin{equation}\label{nudefn1}
    \nu_{a, b, r}^1 = Z_{a, b, r}^1 - P_{a, b, r}^1,
\end{equation} $\nu_{a, b, r}^2$ is the difference between the number of zeroes (denoted by $Z_{a, b, r}^2$) and the number of poles (denoted by $P_{a, b, r}^2$)  of $Q_r(a, y)$ inside the circle $|y| = b,$  defined, by \begin{equation}\label{nudefn}\nu_{a, b, r}^2 = Z_{a, b, r}^2 - P_{a, b, r}^2,\end{equation} and  $\m_{1, 1}(Q_r) = \m(Q_r).$ Moreover, for $r \in  U_{a, b} \cap U_{1, 1},$ $\nu_{a, b, r}^j$ does not depend on $r.$ 
 \end{thm}

A follow-up question can be posed regarding the values of $\m_{\mathfrak{a}}(P_k)$ when $k$ belongs to one of the bounded open connected components of  $\C \setminus \K_{\mathfrak{a}}.$ The next theorem answers this question when $\nu_{\mathfrak{a}, k}^j$ satisfies a particular condition. 

We introduce some necessary notations to state the next theorem. Multiplying $P_k$ with a suitable power of $x_{j},$ we can factorise $P_k$ in linear factors with coefficients in $\overline{\C(x_1, \dots, \widehat{x_j}, \dots, x_{n})}$ as \[P_{k}(x_1, \dots, x_n) = x_j^{-v_j}P_{F, k}^j(x_1, \dots, \widehat{x_j}, \dots, x_{n}) \prod_{l = 1}^{d_n} \left(x_j - X_{l, k, j}\left(x_1, \dots, \widehat{x_j}, \dots, x_{n}\right)\right),\] where $d_j$ is the degree of $P_k$ as a polynomial in $x_j,$ $X_{l, k, j}$ are algebraic functions of $(x_1, \dots, \widehat{x_j}, \dots, x_{n})$ for $l = 1, \dots, d_n,$ $P_{F, k}^j$ is the leading coefficient with respect to the variable $x_j,$ and $v_j$ is the largest power of $x_j^{-1}$ in $P_k.$ Let $P_{f, k}^j(x_1, \dots, \widehat{x_j}, \dots, x_{n})$ denote the "constant" coefficient with respect to the variable $x_j.$ Then \[P_{F, k}^j(x_1, \dots, \widehat{x_j}, \dots, x_{n}) \prod_{j = 1}^{d_n} X_{l, k, j}(x_1, \dots, \widehat{x_j}, \dots, x_{n}) = P_{f, k}^j(x_1, \dots, \widehat{x_j}, \dots, x_{n}).\] Then, we have the following theorem.
\begin{thm}\label{nvarconstant}
Let $\mathfrak{a} = (a_1, \dots, a_n) \in \R_{> 0}^n.$ Let $k_0 \in \C \setminus \K_{\mathfrak{a}}$ such that $k_0$ belongs to one of the bounded open connected components of $\C \setminus \K_{\mathfrak{a}}.$ We denote by $V_{\mathfrak{a}, k_0}$ the bounded open connected component containing $k_0.$  \begin{enumerate}
\item[(I)] For $j = 1, \dots, n,$ if all the roots of $P_{k_0}(a_1, \dots, a_{j-1}, x_j, a_{j+1}, \dots, a_n)$ lie entirely inside the circle $\TT_{a_j}^1,$ then, for all $k \in V_{\mathfrak{a}, k_0},$ \begin{equation*}
\m_{\mathfrak{a}}(P_k) = \nu_{\mathfrak{a}, k}^j \log a_j  + \m_{a_1, \dots, \widehat{a_j}, \dots, a_n}\left(P_{F, k}^j\right).
\end{equation*}

\medskip

\item[(II)] For $j = 1, \dots, n,$ if all the roots of $P_{k_0}(a_1, \dots, a_{j-1}, x_j, a_{j+1}, \dots, a_n)$ lie entirely outside the circle $\TT_{a_j}^1,$ then, for all $k \in V_{\mathfrak{a}, k_0},$ \begin{equation*}
\m_{\mathfrak{a}}(P_k) = \nu_{\mathfrak{a}, k}^j \log a_j  + \m_{a_1, \dots, \widehat{a_j}, \dots, a_n}\left(P_{f, k}^j\right).
\end{equation*} 
\end{enumerate}

\end{thm}

Similarly, for the $2$-variable case, $Q_{r}(x, y),$ when considered as a polynomial in $y$ (resp. $x$) of degree $d_y$ (resp. $d_x$) with coefficients in $\overline{\C(x)}$ (resp. $\overline{\C(y)}$), can be expressed as \begin{align*}
Q_r(x, y) =& (y)^{-v_2}\left(Q_{F, r}^y (x)(y)^{d_y} + Q_{f, r}^y(x) + \sum_{j=1}^{d_y-1}a_{j, r}^y(x)(y)^j\right)   \\ =& (x)^{-v_1}\left(Q_{F, r}^x (y)(x)^{d_x} + Q_{f, r}^x(y) + \sum_{j=1}^{d_x-1}a_{j, r}^x(y)(x)^j\right), 
\end{align*}  
where $v_1$ and $v_2$ denote the largest powers of $x^{-1}$ and $y^{-1}$ in $Q_{r}(x, y),$ respectively, and $Q_{F, r}^u$ and $Q_{f, r}^u$ are the respective leading and "constant" coefficient with respect to the variable $u,$ for $u =x \ \mbox{or} \ y.$ Then, again as a corollary to Theorem \ref{nvarconstant}, we have the following theorem. 

\begin{thm}\label{constant}
Let $a$ and $b$ be positive real numbers. Let $r_0 \in \C \setminus \Rh_{a, b}$ such that $r_0$ belongs to one of the bounded open connected components of $\C \setminus \Rh_{a, b}.$ We denote by $V_{a, b, r_0}$ the bounded open connected component containing $r_0.$  \begin{enumerate}[\textsl{\textbf{ (\roman*)}},ref=(\textbf{\roman*})]
\item \label{item:firstcond} If all the roots of $Q_{r_0}(a, y)$ either lie entirely inside the circle $\TT_{b}^1$ or lie entirely outside the circle $\TT_{b}^1,$ then, for all $r \in V_{a, b, r_0},$ 
\begin{equation*}
\m_{a, b}(Q_r) - \nu_{a, b, r}^2 \log b = \left\{\begin{array}{ll}
\m_{a}(Q_{F, r}^y(x)) & \mbox{when all roots of} \ Q_{r_0}(a, y) \ \mbox{lie inside} \ \TT_{b}^1, \\ \m_{a}(Q_{f, r}^y(x)) & \mbox{when all roots of} \ Q_{r_0}(a, y) \ \mbox{lie outside} \ \TT_{b}^1.
\end{array} \right.
\end{equation*}

\item \label{item:secndcond} If all the roots of $Q_{r_0}(x, b)$ either lie entirely inside the circle $\TT_{a}^1$ or lie entirely outside the circle $\TT_{a}^1$, then, for all $r \in V_{a, b, r_0},$ 
\begin{equation*}
\m_{a, b}(Q_r) - \nu_{a, b, r}^1 \log a = \left\{\begin{array}{ll}
\m_{b}(Q_{F, r}^x(y)) & \mbox{when all roots of} \ Q_{r_0}(x, b) \ \mbox{lie inside} \ \TT_{a}^1, \\ \m_{b}(Q_{f, r}^x(y)) & \mbox{when all roots of} \ Q_{r_0}(x, b) \ \mbox{lie outside} \ \TT_{a}^1.
\end{array} \right.
\end{equation*}
\end{enumerate}

\end{thm}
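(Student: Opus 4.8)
The plan is to integrate first in the variable $y$ by means of Jensen's formula — which turns $\m_{a,b}(Q_r)$ into a one–variable Mahler measure — and then to control the moduli of the roots of $Q_r(x,\cdot)$ by a connectedness argument over $\TT_a^1\times V_{a,b,r_0}$. I would write out the argument for \ref{item:firstcond}; part \ref{item:secndcond} follows verbatim after exchanging the roles of $(x,a,d_x,v_1)$ and $(y,b,d_y,v_2)$. Since $r_0$, and hence every $r\in V_{a,b,r_0}$, lies in $\C\setminus\Rh_{a,b}$, we have $Q_r(x,y)=r-Q(x,y)\ne 0$ on $\TT_{a,b}^2$, so $\log|Q_r|$ is bounded and continuous there and Fubini gives
\[
\m_{a,b}(Q_r)=\frac{1}{2\pi i}\int_{\TT_a^1}\Big(\frac{1}{2\pi i}\int_{\TT_b^1}\log|Q_r(x,y)|\,\frac{dy}{y}\Big)\frac{dx}{x}.
\]
For each $x\in\TT_a^1$ outside the finite set on which the nonzero Laurent polynomials $Q_{F,r}^y$ or $Q_{f,r}^y$ vanish, I factor $y^{v_2}Q_r(x,y)=Q_{F,r}^y(x)\prod_{i=1}^{d_y}\big(y-y_i(x)\big)$ with $y_i(x)\in\C^*$ and apply Jensen's formula on $|y|=b$ to each factor, obtaining
\[
\frac{1}{2\pi i}\int_{\TT_b^1}\log|Q_r(x,y)|\,\frac{dy}{y}=-v_2\log b+\log|Q_{F,r}^y(x)|+\sum_{i=1}^{d_y}\log\max\big(b,\,|y_i(x)|\big).
\]

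The key step is to show that, under the hypothesis of \ref{item:firstcond}, the sum $\sum_i\log\max(b,|y_i(x)|)$ collapses the same way for almost every $x\in\TT_a^1$ and every $r\in V_{a,b,r_0}$. Consider
\[
W(x,r):=\frac{1}{2\pi i}\oint_{|y|=b}\frac{\partial_y Q_r(x,y)}{Q_r(x,y)}\,dy ,
\]
which is well defined, integer valued and continuous on $\TT_a^1\times V_{a,b,r_0}$ because its denominator never vanishes there; being $\Z$-valued and continuous on a connected set, it is constant. By the argument principle $W(x,r)$ equals the number of zeros of $Q_r(x,\cdot)$ in $|y|<b$ minus the order of its pole at $y=0$, i.e. $W(a,r)=\nu_{a,b,r}^2$. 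Evaluating at $(a,r_0)$ and using that all roots of $Q_{r_0}(a,y)$ lie inside $\TT_b^1$ forces $W\equiv d_y-v_2$ on $\TT_a^1\times V_{a,b,r_0}$ (and $W\equiv -v_2$ in the "outside" case); in particular $\nu_{a,b,r}^2=d_y-v_2$ (resp. $-v_2$) for every $r\in V_{a,b,r_0}$. For a "good" $x$ as above, $Q_r(x,\cdot)$ has exactly $d_y$ zeros in $\C^*$ and a pole of order exactly $v_2$ at $0$, so the number of its zeros inside $\TT_b^1$ is $W(x,r)+v_2$, namely $d_y$ (resp.\ $0$); hence for almost every $x$ all the $y_i(x)$ lie inside $\TT_b^1$ (resp.\ all lie outside).

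Assembling: in the "inside" case $\log\max(b,|y_i(x)|)=\log b$ for each $i$, so the inner integral equals $(d_y-v_2)\log b+\log|Q_{F,r}^y(x)|=\nu_{a,b,r}^2\log b+\log|Q_{F,r}^y(x)|$, and integrating over $\TT_a^1$ (where $\log|Q_{F,r}^y|$ is integrable) yields $\m_{a,b}(Q_r)=\nu_{a,b,r}^2\log b+\m_a(Q_{F,r}^y)$. In the "outside" case $\log\max(b,|y_i(x)|)=\log|y_i(x)|$ and $\prod_i y_i(x)=(-1)^{d_y}Q_{f,r}^y(x)/Q_{F,r}^y(x)$, so the $\log|Q_{F,r}^y(x)|$ terms cancel and the inner integral equals $-v_2\log b+\log|Q_{f,r}^y(x)|=\nu_{a,b,r}^2\log b+\log|Q_{f,r}^y(x)|$; here $Q_{f,r}^y$ is not identically zero because the $y_i(x)$ are nonzero, and integrating gives $\m_{a,b}(Q_r)=\nu_{a,b,r}^2\log b+\m_a(Q_{f,r}^y)$. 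This establishes \ref{item:firstcond}, and \ref{item:secndcond} follows by the same computation with $x$ and $y$ interchanged.

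The main obstacle is the middle paragraph: one must make rigorous that the single hypothesis imposed at $x=a$, $r=r_0$ really propagates, and in particular handle the finitely many $x\in\TT_a^1$ where the leading (or trailing) $y$–coefficient of $Q_r$ degenerates and a root escapes toward $y=\infty$ — such a root is arbitrarily large, hence automatically outside $|y|=b$, so it affects neither the constancy of the interior zero count nor the integrability used in the last step, but this needs to be checked, together with the fact that being inside a \emph{bounded} component of $\C\setminus\Rh_{a,b}$ forces $W$ to equal the extreme value $d_y-v_2$ (resp.\ $-v_2$) dictated by the root condition at $x=a$. Everything else reduces to Jensen's formula and elementary manipulations.
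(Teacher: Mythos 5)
Your proposal is correct in substance, and it reaches the conclusion by a genuinely different mechanism from the paper's. The paper propagates the hypothesis at $(a,r_0)$ in three separate steps: Proposition \ref{allin1} (constancy of the interior root count in $x$ along $\TT_a^1$, proved via Puiseux expansions and branch analysis), an explicit Rouch\'e argument exploiting that only one coefficient of $y^{v_2}Q_r$ depends on $r$ (and implicitly requiring the roots $y_{j,r_0}(a)$ to be distinct) to propagate the root configuration from $r_0$ to a small disc $|r-r_0|<\delta(\epsilon,r_0)$, and finally harmonicity of $r\mapsto \m_{a,b}(Q_r)$ to extend the resulting identity from that disc to all of $V_{a,b,r_0}$. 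You replace all three steps by a single observation: the winding number $W(x,r)$ is continuous and integer-valued on the connected product $\TT_a^1\times V_{a,b,r_0}$ (continuity holding because $Q_r$ never vanishes on $\TT_{a,b}^2$ for $r$ in that component), hence constant; this yields the uniform collapse of the Jensen sum for every $r\in V_{a,b,r_0}$ simultaneously, with no Rouch\'e estimates, no distinct-roots assumption, and no harmonic continuation. The concluding Jensen/Vieta computations agree with the paper's \eqref{eval} and \eqref{eval*}. What you still owe — and correctly flag — is the treatment of the finitely many degenerate points of $\TT_a^1$ where $Q_{F,r}^y$ or $Q_{f,r}^y$ vanishes: away from $x=a$ these only affect a null set in the final integration, but at $x=a$ itself the evaluation $W(a,r_0)=d_y-v_2$ in the ``inside'' case genuinely requires $Q_{F,r_0}^y(a)\neq 0$ (a root escaping to $y=\infty$ at $x=a$ lowers the count). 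The paper's Cases 2 and 3 handle exactly this by replacing $a$ with a nearby non-degenerate point of $\TT_a^1$ and arguing which alternative of the dichotomy can then occur; the same patch is needed, and works, in your argument.
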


Using Theorems \ref{mainthm} and \ref{constant}, Cassaigne and Maillot's result in \eqref{CM} follows immediately when the "triangle condition" does not hold. In this case, $M_{c}(x, y) = c - x - y$ for $c \in \C.$ For $a, b \in \C^*,$ $\Rh_{|a|, |b|}$ for this family of polynomials is the "closed" annulus $\left\{z \in \C: |z| \in \left[\left||a| - |b|\right|, |a|+|b|\right]\right\}.$ Note that, when $c$ belongs to the unbounded component of $\C \setminus \Rh_{|a|, |b|},$ we have $\nu_{|a|, |b|, c}^j = 0.$ Then, Theorem \ref{mainthm} and harmonic properties of Mahler measure imply that, when $|c| > |a| + |b|,$ \[\m_{|a|, |b|}(M_c) = \m(|a|x + |b|y + c) = \log |c|.\] On the other hand, Theorem \ref{constant} implies that, for $|c| < \left||a| - |b|\right|,$ \[\m_{|a|, |b|}(M_c) = \log \max \{|a|, |b|\},\] since $\nu_{|a|, |b|, c}^1 = 1$ (resp. $\nu_{|a|, |b|, c}^2 = 1$) when $|a| > |b|$ (resp. $|b| > |a|$). The combination of both equalities leads to a restatement of \eqref{CM} when $\Delta$ does not hold. We should remark that the condition $\Delta$ in their result is equivalent to the condition $c \in \Rh_{|a|, |b|},$ i.e. $M_c$ vanishes on the integration torus. A more involved approach, using Theorems \ref{mainthm} and \ref{constant} on the family of polynomials \[\begin{array}{ll}
 R_{\alpha}^*(x, y)&:=\displaystyle \alpha - x^{-1} - y^{-1} - xy^{-1} - yx^{-1} - x - y, \quad \quad \alpha \in \C, \\
 S_{\beta, -1}^*(x, y)&:=\displaystyle \beta - x^{-1}y + x^2y^{-1} - y^{-1}, \quad \quad \beta \in \C,
\end{array}\] re-establishes the identities obtained in \cite{LM18} for $\alpha = -4$ and $\beta = 2.$ Note that the aforementioned result(s) involving the generalized Mahler measure of $R_{-2}^*$ (resp. $S_{2, -1}^*$) on the torus $\TT_{a, b}^2$ only depends on $b$ since the integration torus is $\TT_{b^2, b}^2$ (resp. $\TT_{b , b}^2$), i.e. $a$ is a function of $b$ here. Our theorems, along with the method of the \textit{Lagrange multiplier}, provide a larger set of pairs $(a, b) \in \R_{> 0}^2,$ such that similar types of identities obtained in \cite{LM18} hold even when $a$ is not a function of $b.$ An analogous result is exhibited in Section $5$ with a different family of polynomials investigated by Boyd \cite{Bo2}, namely the family given in \eqref{firsteq*}.

Due to the technical difficulties involving the study of the integration path in the definition of Mahler measure, it is challenging to evaluate $\m_{\mathfrak{a}}(P_k)$ explicitly for all $\mathfrak{a} \in \left(\R_{>0}\right)^n.$ In this regard, Theorems \ref{nvarthm} and \ref{nvarconstant} have a common feature: \textit{the Laurent polynomial in consideration does not vanish on the integration torus}. Since the method of proof is the same for Theorem \ref{nvarthm} (resp. Theorem \ref{nvarconstant}) and Theorem \ref{mainthm} (resp. Theorem \ref{constant}), we provide proofs of Theorems \ref{mainthm} and \ref{constant} in later sections, and outline follow-up arguments generalizing our method to derive Theorems \ref{nvarthm} and \ref{nvarconstant}. 

The next result considers a particular polynomial from our initial family of polynomials, namely \[Q_4(x, y) = x + \frac{1}{x} + y +\frac{1}{y} + 4.\] It removes the constraint of being non-zero on the integration torus, and evaluates the generalized Mahler measure of $Q_4(x, y)$ for all $a, b > 0.$ 

\begin{thm}\label{k=4*}
Let $a, b \in \R_{> 0},$ and define \[c = \sqrt{ab}, \quad d = \sqrt{\frac{b}{a}}, \quad \mbox{and} \ \mathcal{A}_{c, d} =  \frac{1 - d^2}{1 + d^2}\cdot \frac{1+c^2}{2c},\] such that $c$ and $d$ are both positive real numbers. Then, \begin{equation*}
\m_{a, b}(Q_4(x, y)) = \left\{\begin{array}{ll} \max\{\log c, -\log c\} + \max\{\log d, -\log d\} \quad &\mbox{if} \ \left|\mathcal{A}_{c, d}\right| \geq 1, \\ \\ \frac{2}{\pi}\left[ D(ice^{-i\mu}) + D(ice^{i\mu}) - \mu \log d + (\log c) \tan^{-1}\left(\frac{c - c^{-1}}{2\cos \mu}\right)\right] \quad &\mbox{if} \ \left|\mathcal{A}_{c, d}\right| < 1, \end{array}\right.
\end{equation*} where $\mu = \sin^{-1}\left(\mathcal{A}_{c, d} \right) \in \left(-\frac{\pi}{2}, \frac{\pi}{2}\right),$ and $D$ is the Bloch--Wigner dilogarithm defined, for $z \in \C,$ by \begin{equation*}
D\left(z\right)= \im\left(\Li_2\left(z\right)+ i\arg\left(1-z\right) \log|z|\right), \quad \quad \Li_2\left(z\right)=-\int_0^z\frac{\log\left(1-v\right)}{v}dv.
\end{equation*} 
\end{thm}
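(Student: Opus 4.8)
The plan is to decouple $Q_4$ by a change of variables, reduce the double integral to a one-variable integral via Jensen's formula, and evaluate that integral through the series expansion of the dilogarithm. For the reduction, note that $(x,y)=(\xi\eta^{-1},\xi\eta)$ gives the factorization $Q_4(x,y)=(\xi+\xi^{-1})(\eta+\eta^{-1})+4=:\QQ(\xi,\eta)$. Writing $x=ae^{i\theta_1}$, $y=be^{i\theta_2}$, $\xi=ce^{i\alpha}$, $\eta=de^{i\beta}$ with $c=\sqrt{ab}$, $d=\sqrt{b/a}$, $\alpha=(\theta_1+\theta_2)/2$, $\beta=(\theta_2-\theta_1)/2$, we get $\log|Q_4(ae^{i\theta_1},be^{i\theta_2})|=h(\alpha,\beta)$ with $h(\alpha,\beta)=\log\bigl|(ce^{i\alpha}+c^{-1}e^{-i\alpha})(de^{i\beta}+d^{-1}e^{-i\beta})+4\bigr|$. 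The linear change $(\theta_1,\theta_2)\mapsto(\alpha,\beta)$ has Jacobian $\tfrac12$ and sends $[0,2\pi)^2$ onto a fundamental domain for the lattice $L=\langle(\pi,-\pi),(0,2\pi)\rangle$ of covolume $2\pi^2$; since $h$ is invariant under $(\alpha,\beta)\mapsto(\alpha+\pi,\beta-\pi)$ (each of the two factors merely changes sign) it is $L$-periodic, and since it is also $(2\pi\Z)^2$-periodic with $(2\pi\Z)^2\subset L$ of index $2$, its averages over the two tori agree. This gives $\m_{a,b}(Q_4)=\m_{c,d}(\QQ)$, the integrand being in $L^1$ as usual although $\QQ$ vanishes at finitely many points of the torus.

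Now fix $\xi=ce^{i\alpha}$ and put $P=\xi+\xi^{-1}$; multiplying by $e^{i\beta}$ inside $\log|\cdot|$ and setting $w=e^{i\beta}$, the inner integral over $\TT_d^1$ equals $\tfrac1{2\pi i}\oint_{|w|=1}\log|Pdw^2+4w+Pd^{-1}|\tfrac{dw}{w}$, which Jensen's formula evaluates as $\log|Pd|+\max(0,\log\tfrac{|\zeta_1|}{d})+\max(0,\log\tfrac{|\zeta_2|}{d})$, where $\zeta_1\zeta_2=1$ and $\zeta_1+\zeta_2=-4/P$. The quadratic $\zeta^2+\tfrac4P\zeta+1$ factors explicitly via $\zeta=i\frac{\xi+i}{\xi-i}$, so $\max(|\zeta_1|,|\zeta_2|)=s(\alpha):=\bigl(\tfrac{(c+c^{-1})+2|\sin\alpha|}{(c+c^{-1})-2|\sin\alpha|}\bigr)^{1/2}$, and a short case analysis reduces the two $\max$'s to $\tfrac1{2\pi}\int_0^{2\pi}\log|\QQ(ce^{i\alpha},de^{i\beta})|\,d\beta=\log|P(\alpha)|+\log\max\bigl(s(\alpha),\max(d,d^{-1})\bigr)$. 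Integrating in $\alpha$, the first term contributes $\m_c(X+X^{-1})=\max(\log c,-\log c)$.

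Since $s(\alpha)\geq\max(d,d^{-1})$ exactly when $|\sin\alpha|\geq|\mathcal A_{c,d}|$, the case $|\mathcal A_{c,d}|\geq1$ (where this fails off a null set) gives immediately $\m_{a,b}(Q_4)=\max(\log c,-\log c)+\max(\log d,-\log d)$. When $|\mathcal A_{c,d}|<1$, set $\mu=\arcsin\mathcal A_{c,d}\in(-\tfrac\pi2,\tfrac\pi2)$; the set $\{|\sin\alpha|\geq|\mathcal A_{c,d}|\}$ has measure $2\pi-4|\mu|$, and (noting $|\mu|\max(\log d,-\log d)=-\mu\log d$) one obtains
\[
\m_{a,b}(Q_4)=\max(\log c,-\log c)-\frac{2\mu\log d}{\pi}+\frac1\pi\int_{|\mu|}^{\pi/2}\log\frac{(c+c^{-1})+2\sin\alpha}{(c+c^{-1})-2\sin\alpha}\,d\alpha .
\]
By the symmetries $\m_{c,d}(\QQ)=\m_{1/c,d}(\QQ)=\m_{d,c}(\QQ)$ (reparametrize the relevant one-variable integral) we may assume $c\leq1$.

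It remains to evaluate the last integral. Factoring $(c+c^{-1})+2\sin\alpha=-ie^{-i\alpha}(e^{i\alpha}+ic)(e^{i\alpha}+ic^{-1})$ (and the analogous identity with $\alpha\mapsto-\alpha$) and using $|e^{i\alpha}+ic|=|1-ic\,e^{i\alpha}|$, the integrand equals $\log|1-(ic)e^{i\alpha}|+\log|1-(ic^{-1})e^{i\alpha}|-\log|1-(-ic)e^{i\alpha}|-\log|1-(-ic^{-1})e^{i\alpha}|$. With the antiderivative $\int_0^{\beta}\log|1-qe^{i\alpha}|\,d\alpha=\im\Li_2(q)-\im\Li_2(qe^{i\beta})$ for $|q|<1$ (and its $|q|>1$ counterpart obtained from $1-qe^{i\alpha}=-qe^{i\alpha}(1-q^{-1}e^{-i\alpha})$), the reflection $\im\Li_2(\bar z)=-\im\Li_2(z)$, and $\im\Li_2(t)=0$ for real $t$ with $|t|<1$, the integral collapses to $2\,\im\Li_2(ic\,e^{i|\mu|})+2\,\im\Li_2(ic\,e^{-i|\mu|})$, the $\log c$-weighted boundary terms from the two $|q|>1$ integrals cancelling each other. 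Rewriting via $\im\Li_2(z)=D(z)-\arg(1-z)\log|z|$, evaluating $\arg(1-ic\,e^{i|\mu|})+\arg(1-ic\,e^{-i|\mu|})=\arg\bigl((1-c^2)-2ic\cos\mu\bigr)=-\tfrac\pi2-\arctan\tfrac{c-c^{-1}}{2\cos\mu}$ (valid for $c<1$), and using $D(ic\,e^{i|\mu|})+D(ic\,e^{-i|\mu|})=D(ic\,e^{i\mu})+D(ic\,e^{-i\mu})$, produces exactly the stated formula; the case $c=1$ follows by continuity. The main obstacle is precisely this step: assembling the four $\log|1-qe^{i\alpha}|$ contributions, handling the $|q|>1$ terms, and tracking the branches of the arguments so that the $\log c$-multiplied parts cancel while the rest combines into $\arctan\tfrac{c-c^{-1}}{2\cos\mu}$. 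The invariance of the final expression under $c\mapsto1/c$ and under $c\leftrightarrow d$, and the check $\m_{1,1}(Q_4)=\tfrac4\pi D(i)$, are useful safeguards.
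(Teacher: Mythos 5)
Your proof is correct, and it takes a genuinely different route from the paper's. Both arguments start from the same degree-$2$ substitution $xy=\xi^2$, $y/x=\eta^2$ (your careful lattice/covering justification of $\m_{a,b}(Q_4)=\m_{c,d}(\QQ)$ is welcome, since the paper asserts this step without comment), and both hinge on the same dichotomy $|\sin\alpha|\gtrless|\mathcal{A}_{c,d}|$; but from there the paths diverge. The paper factors the transformed polynomial into the two linear pieces $1\pm iw\pm iz+wz$, solves $z=z(w)$, splits $|w|=c$ into the arcs $\gamma_{>d}$ and $\gamma_{\le d}$, and integrates the closed form $\eta(w,z(w))=dD(-iw)-dD(iw)$ by Stokes, with the residual term $\eta(c,z(w))$ handled by an explicit $d\arg$ computation (its Lemmas 6.4--6.6). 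You instead keep the symmetric product form $(\xi+\xi^{-1})(\eta+\eta^{-1})+4$, apply Jensen to the quadratic in $e^{i\beta}$ to land on the single elementary real integral $\int_{|\mu|}^{\pi/2}\log\frac{(c+c^{-1})+2\sin\alpha}{(c+c^{-1})-2\sin\alpha}\,d\alpha$, and evaluate it via the Fourier expansion of $\log|1-qe^{i\alpha}|$, i.e.\ directly from the series defining $\im\Li_2$. Your route buys a completely elementary, $\eta$-free proof (no differential forms, no Stokes, no arc bookkeeping on the torus), at the cost of the same delicate branch-tracking at the very end when converting $\im\Li_2$ to $D$ and assembling the $\arctan$ term; the paper's route is less self-contained but sits naturally inside the regulator framework used throughout the rest of the article. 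I verified your intermediate identities (the value of $\max(|\zeta_1|,|\zeta_2|)$, the reduction of the two Jensen maxima to $\log\max(s(\alpha),\max(d,d^{-1}))$, the cancellation of the $\log c$-weighted boundary terms, and the argument evaluation $\arg\bigl((1-c^2)-2ic\cos\mu\bigr)=-\tfrac{\pi}{2}-\arctan\tfrac{c-c^{-1}}{2\cos\mu}$ for $c<1$), and the result matches the stated formula. One small expository point: after reducing to $c\le 1$ via $\m_{c,d}(\QQ)=\m_{1/c,d}(\QQ)$, concluding the theorem for $c>1$ \emph{requires} the invariance of the right-hand side under $c\mapsto 1/c$ (which follows from $D(1/\bar z)=D(z)$ together with the oddness of $\log c$ and of $\arctan\tfrac{c-c^{-1}}{2\cos\mu}$); you list this only as a ``safeguard,'' but it should be stated as part of the argument.
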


Under a certain change of variables, the polynomial above can be factored into two linear polynomials \cite{Bo2}. This simplification along with a direct approach involving a particular differential form and the Bloch--Wigner dilogarithm lead us to the explicit formula in the statement of Theorem \ref{k=4*}. 

This article is organised as follows. In Section $2,$ we recall some relations between Mahler measure, the Bloch--Wigner dilogarithm and a particular differential form appearing after a simplification of the definition in \eqref{defmm}. We end the section with a discussion on Mahler measure over an arbitrary torus. In Section $3,$ we discuss the proof of Theorem \ref{mainthm} and some auxiliary results required to complete the proof. We conclude the section with a brief argument generalizing our method to the several variables setting and proving Theorem \ref{nvarthm}. Section $4$ is completely dedicated to the proof of Theorem \ref{constant} and subsequently to the proof of Theorem \ref{nvarconstant} by a similar generalization. In Section $5,$ we discuss some applications of Theorems \ref{mainthm} and \ref{constant} to the family of polynomials in \eqref{firsteq*}. We then prove Theorem \ref{k=4*} in Section $6,$ where we use properties of the differential form and Bloch--Wigner dilogarithm mentioned in Section $2.$ We end this article with concluding remarks on possible directions to pursue going forward.

\section*{Acknowledgements}
I express my deepest gratitude to my Ph.D. supervisor, Matilde Lal\'in, for her invaluable assistance and support, and for sharing several ideas that have enriched this work. I am indebted to the anonymous referee for helpful remarks that greatly improved the exposition of this article. I am grateful to Andrew Granville and Christopher Deninger for their useful comments and enlightening discussions. I am also thankful to Ali Zahabi for helpful discussions on Ronkin functions and the generalized Mahler measure. Finally, I would like to express my gratitude to the Facult\'e des \'etudes sup\'erieurs et postdoctorales (bourses d'excellence) of the Universit\'e de Montr\'eal, the Institute des sciences math\'ematiques and the Centre de recherches math\'ematiques for their financial support.   
   
\section{Mahler measure and a differential form}

In this section, we briefly review some necessary background prior to proving the theorems in the following sections. 

\subsection{\textbf{Jensen's Formula}} \label{jensen}We recall a special case of Jensen's formula. Let $z_0 \in \C.$ Then \begin{align*}
\frac{1}{2 \pi i} \int_{\mathbb{T}^ 1}\log|z - z_0|\frac{dz}{z} = \left \{ \begin{array}{lr} \log |z_0 | & \quad |z_0 | \geq 1,\\\\ \quad 0 & \quad |z_0| \leq 1. \end{array} \right.
\end{align*}

\subsection{\textbf{Bloch--Wigner Dilogarithm}}
For $z \in \C,$ the Bloch--Wigner dilogarithm $D(z)$ is defined as
\begin{equation}\label{Bloch-Wigner}
D\left(z\right)= \im\left(\Li_2\left(z\right)+ i\arg\left(1-z\right) \log|z|\right),
\end{equation} where $\Li_2\left(z\right)=-\int_0^z\frac{\log\left(1-v\right)}{v}dv.$
In \cite{Za1}, Zagier shows that it can be extended continuously to $\C \cup \{ \infty \},$ with $D(\infty) = D(0) = D(1) = 0.$  In fact, it is real-analytic in $\C \setminus \{0, 1\}.$ 

For $z \in \C$, 
 \begin{align} \label{eq:relations}
 D\left(\bar{z}\right) = - D\left(z\right).
 \end{align}
This also implies that $D(r) = 0$ for all $r \in \R$  (for more details see \cite{Za1}). This property of the Bloch--Wigner dilogarithm frequently appears in the proof of Theorem \ref{k=4*}.

\subsection{\textbf{A differential form and its applications}}

Let $C$ be a curve over $\C$ which defines a compact Riemann surface, and  let $\C(C)$ be its field of fractions. For $f, g \in \C(C)^*,$ we define \begin{equation}\label{eq:eta}
\eta \left(f, g\right) := \log|f|  d \arg g - \log|g| d \arg f,
\end{equation} 
where $d\arg x$ is defined by $\Im(\frac{d x}{x})$. Note that, $\eta$ is a real $C^{\infty}$ differential $1$-form on $C \setminus S$, where $S$ contains all the zeroes and poles of $f$ and $g$. The following lemma consists of some useful properties of $\eta$ which are frequently used in later sections (see \cite{Gon1, Gon2}, \cite{DZ89} for more details).
\begin{lem} \label{lem:eta}
Let $f, g, h, v \in \C(C)^*$ and $a, b \in \C^*.$ Then we have 
\begin{enumerate}
\item \label{anttsym} $\eta(f, g) = -\eta(g, f),$ i.e. $\eta$ is anti-symmetric,
\item \label{multpl}$\eta(fg, hv) = \eta(f, h) + \eta(g, h) + \eta(f, v) + \eta(g, v),$ 
\item \label{constnt} $\eta(a, b) = 0,$
\item\label{closed}$\eta$ is a closed differential form,
\item for $x, 1-x \in \C(C)^*,$ \begin{equation}  \label{eta(x,1-x)}
 \eta\left(x, 1-x\right) = d D\left(x\right).
 \end{equation}
\end{enumerate}
\end{lem}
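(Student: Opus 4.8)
The properties \eqref{anttsym}--\eqref{constnt} are immediate from the definition \eqref{eq:eta}, so I would dispose of them first. Anti-symmetry \eqref{anttsym} follows by inspection: $\eta(g,f) = \log|g|\,d\arg f - \log|f|\,d\arg g = -\eta(f,g)$. For \eqref{constnt}, a nonzero constant $a \in \C^*$ has $|a|$ and $\arg a$ both constant, so $d\arg a = 0$ (and likewise for $b$); hence every term of $\eta(a,b)$ vanishes. For the bilinearity \eqref{multpl}, I would first record the additivity of both ingredients under multiplication: $\log|fg| = \log|f| + \log|g|$, and, since $d\arg u = \im(du/u)$ and $d(uv)/(uv) = du/u + dv/v$, also $d\arg(fg) = d\arg f + d\arg g$. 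Substituting these into \eqref{eq:eta} gives additivity in the first slot, $\eta(fg,h) = \eta(f,h) + \eta(g,h)$, and by \eqref{anttsym} additivity in the second slot as well; expanding $\eta(fg, hv)$ in both arguments then yields \eqref{multpl}.

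For the closedness \eqref{closed}, I would express the two real $1$-forms appearing in $\eta$ through the holomorphic logarithmic derivative: $d\log|f| = \re(df/f)$ and $d\arg f = \im(df/f)$, where $df/f$ is a $(1,0)$-form on $C$. Since $d(df/f)=0$, differentiating \eqref{eq:eta} gives
\[ d\eta(f,g) = d\log|f| \wedge d\arg g - d\log|g| \wedge d\arg f = \re\!\left(\tfrac{df}{f}\right)\wedge \im\!\left(\tfrac{dg}{g}\right) - \re\!\left(\tfrac{dg}{g}\right)\wedge \im\!\left(\tfrac{df}{f}\right). \]
Working in a local holomorphic coordinate $z = x + \ii y$, writing $df/f = p\,dz$ and $dg/g = q\,dz$ and splitting into real and imaginary parts, a short computation shows that each of the two summands is a multiple of $dx\wedge dy$ with the \emph{same} coefficient, so the difference vanishes. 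The conceptual reason I would emphasize is that on the one-dimensional complex manifold $C$ the wedge of any two $(1,0)$-forms is zero; this kills the $(2,0)$ and $(0,2)$ contributions and forces the two mixed $(1,1)$ terms to cancel. Hence $d\eta = 0$.

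The substantive identity is \eqref{eta(x,1-x)}. Reading off \eqref{Bloch-Wigner} and noting that $\ii\arg(1-x)\log|x|$ is purely imaginary, I would first rewrite $D(x) = \im\Li_2(x) + \arg(1-x)\log|x|$ and then differentiate the two summands separately. For the first, using $\Li_2'(x) = -\log(1-x)/x$ I get $d\,\im\Li_2(x) = \im\!\left(-\tfrac{\log(1-x)}{x}\,dx\right)$; expanding $\log(1-x) = \log|1-x| + \ii\arg(1-x)$ and $dx/x = d\log|x| + \ii\,d\arg x$ and taking the imaginary part yields
\[ d\,\im\Li_2(x) = -\log|1-x|\,d\arg x - \arg(1-x)\,d\log|x|. \]
For the second summand the Leibniz rule gives $d\big(\arg(1-x)\log|x|\big) = \log|x|\,d\arg(1-x) + \arg(1-x)\,d\log|x|$. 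Adding the two, the terms $\arg(1-x)\,d\log|x|$ cancel, leaving $dD(x) = \log|x|\,d\arg(1-x) - \log|1-x|\,d\arg x = \eta(x,1-x)$, as claimed.

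The only genuinely delicate point is \eqref{eta(x,1-x)}: the functions $\Li_2$, $\im\Li_2(x)$, and $\arg(1-x)\log|x|$ are individually multivalued and only locally defined away from the branch locus, yet their combination $D$ is single-valued and real-analytic on $\C\setminus\{0,1\}$. The mechanism that reconciles this is exactly the cancellation of the $\arg(1-x)\,d\log|x|$ terms observed above, which promotes the formal computation to an identity of globally defined $1$-forms; I would flag this branch-independence explicitly rather than suppress it. The remaining parts are routine bookkeeping with the real and imaginary parts of logarithmic derivatives.
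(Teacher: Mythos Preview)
Your proof is correct in every part; each of the five items is handled by the natural direct verification, and your treatment of the closedness \eqref{closed} and of the exactness identity \eqref{eta(x,1-x)} is clean and accurate. The paper itself does not supply a proof of this lemma at all: it states the properties and simply refers the reader to \cite{Gon1,Gon2} and \cite{DZ89} for details. So there is nothing to compare against; your argument is the standard one found in those references, and the remark you make about the branch ambiguities in $\Li_2$ and $\arg(1-x)$ cancelling in $D$ is a useful clarification that the paper omits.
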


Let $P(x, y)$ be a Laurent polynomial in two variables. Multiplying $P(x,y)$ by a suitable power of $y,$ we can always assume that $P(x, y) \in \C[x^{\pm}, y]$ is a polynomial of degree $d$ in $y,$ where $d > 0$. Then $P(x,y)$ has the following factorization over $\overline{\C(x)}:$ \[P\left(x, y\right)=P^*\left(x\right)\left(y-y_1\left(x\right)\right)\left(y-y_2\left(x\right)\right) \cdots \left(y-y_d\left(x\right)\right),\]
where $P^*\left(x\right) \in \C[x]$ and $y_j := y_j\left(x\right)$ are algebraic functions of $x$ for $j = 1, 2, \dots, d.$ 

Applying Jensen's formula with respect to the variable $y$ in the standard Mahler measure formula for $P(x, y),$ we obtain 
\begin{align} 
\m\left(P (x, y)\right)-\m\left(P^* (x)\right) =& \frac{1}{\left(2 \pi i\right)^2} \int_{\TT^2}\log |P\left(x, y\right)|\frac{dx}{x}\frac{dy}{y}-\m\left(P^* (x)\right) \nnum \\
=& \frac{1}{2\pi i}\left(\sum_{j = 1}^{d} \int_{|x|=1, |y_j\left(x\right)|\geq 1} \log|y_j\left(x\right)|\frac{dx}{x}\right) \nnum \\
=& - \frac{1}{2 \pi} \sum_{j = 1}^{d} \int_{|x|=1, |y_j\left(x\right)|\geq 1} \eta \left(x, y_j \right), \label{stndrd mm}
\end{align}
where $\eta$ is defined by \eqref{eq:eta}, and $\eta(x, y_j) = i \log |y_j (x)| \frac{d x}{x},$ which immediately follows from the facts that $\log |x| = \log 1 = 0$ and $\frac{d x}{x} = d (\log |x| + i \arg x).$ Here we consider $\arg (x) \in [-\pi, \pi).$ 

Therefore, if $\eta$ can be decomposed as 
 \begin{equation} \label{eq:decompose}
 \eta\left(x, y_j\right) = \sum_{k} a_{j_k} \eta\left(z_{j_k}, 1 - z_{j_k} \right) = \sum_{k} a_{j_k} d D(z_{j_k}),
 \end{equation} 
where $z_{j_k}, \left(1 - z_{j_k}\right) \in \C(C)^*$ are algebraic functions of $x,$ and the sum is finite, then \eqref{stndrd mm} can be restated in terms of Bloch--Wigner dilogarithm:  \begin{equation*}
 \m \left(P\left(x, y\right)\right) - \m \left(P^*\left(x\right)\right) = -\frac{1}{2\pi} \sum_{j=1}^{d} \sum_{k} a_{j_k} D\left(z_{j_k}\right)|_{ \partial  \{ |x| =1, |y_j \left(x\right) \geq 1 \}},
\end{equation*} 
where $\partial \{ |x| = 1, |y_j| \geq 1 \}$ is the set of boundary points of $\{ |x| = 1, |y_j| \geq 1 \}.$

  \begin{rem} \label{remark2}
As mentioned in \cite{V2}, we may have some extra terms of the form  $\eta \left(c, z\right)$ in \eqref{eq:decompose}, where $c$ is a constant complex number and $z$ is some algebraic function. In that case, we can still reach a closed formula by integrating $\eta \left(c, z\right)$ directly (i.e. by integrating $\log |c| d \arg z$). Also, if $\nu$ is a constant such that $|\nu| = 1,$ then $\eta \left(\nu, z\right) = \log |\nu| d \arg z = 0$. 
 \end{rem}

 \subsection{\textbf{Arbitrary Tori, Mahler measure, and $\eta$}}\label{ATaMM}

For a Laurent polynomial $P(x, y),$ we analyze the Mahler measure of $P$ over an arbitrary torus $\TT_{a, b}^2.$ The following brief description is essentially reproducing the analysis in Section $3$ of \cite{LM18}. For simplicity, we take $d = 2$, where $d$ is the degree of $y$ in $P(x, y)$ once $P$ is multiplied by a suitable power of $y$ to remove any negative power of $y.$  

Let $x=ax'$ and $y=by'.$  Then we have, for $P^*(x) \in \C[x],$
\begin{align}
\m_{a, b}\left(P (x, y)\right)-\m_{a, b}\left(P^*(x)\right)
=&\frac{1}{\left(2\pi i\right)^2} \iint_{|x'|=|y'|=1} \log|P\left(ax', by'\right)|\frac{dx'}{x'}\frac{d y'}{y'}-\m_{a, b}\left(P^*(x)\right)\nnum \\   =& 2\log b + \frac{1}{2\pi i}\left(\sum_{j = 1}^{2} \int_{|x'|=1, |y'_j|\geq 1} \log|y'_j|\frac{dx'}{x'}\right), \nnum \\ =& 2\log b -\frac{1}{2\pi} \sum_{j =1}^2 \int_{|x|=a, |y_j|\geq b} \eta\left(x/a, y_j/b\right).  \label{ATMM}
\end{align}

where $y_j = y_j(x) = by'_j$ are algebraic functions of $x$ for $j = 1, 2,$ and \[\eta\left(x/a, y_j/b\right) = \eta(x', y'_j) = i \log|y'_j| \frac{d x'}{x'},\] for $j =1, 2,$ and the penultimate equality follows from Jensen's formula. Further simplification of the terms involving $y_j$'s using \eqref{multpl} of Lemma \ref{lem:eta} implies 
\begin{equation*}
\m_{a, b}\left(P (x, y)\right)-\m_{a, b}\left(P^*(x)\right)
=2\log b  -\frac{1}{2\pi} \sum_{j=1}^2 \int_{|x|=a, |y_j|\geq b} \left[\eta\left(x, y_j\right)-\eta\left(a, y_i\right)-\eta\left(x, b\right)\right].
\end{equation*}

If $\{|x|=a, |y_j|\geq b\}$ is a closed path, then the integral \[\frac{1}{2\pi} \sum_{j=1}^2 \int_{|x|=a, |y_j|\geq b} \eta\left(x/a, y_j/b\right)\] can be evaluated using Stokes' theorem (see Deninger \cite{Den1}). In addition, if $\{|x|=a, |y_i|\geq b\}$ is a closed path, the term  
\begin{equation*}
\frac{1}{2\pi}\int_{|x|=a, |y_j|\geq b} \eta\left(a, y_i\right) = \frac{\log a}{2\pi} \int_{|x|=a, |y_j|\geq b} d \arg y_j
\end{equation*}
becomes a multiple of $\log a.
$
If we have a genus $0$ curve (such as $C_4 : Q_4(x, y) = 0$) then, instead of proceeding as in the direction above, we may be able to use \eqref{eta(x,1-x)} to relate the Bloch--Wigner dilogarithm and $\eta$ for evaluating the Mahler measure. The evaluation is much simpler in this case as we will see in the proof of Theorem \ref{k=4*}. 

\section{Proof of Theorem \ref{mainthm}}

We are now ready to prove Theorem \ref{mainthm} for $Q_{r}(x, y) = r - Q(x, y) \in \C[x^{\pm}, y^{\pm}],$ where $Q(x, y)$ has no constant term. In this section, we use the notation \[\m_{a, b}(Q_r(x, y)) = \m_{a, b}(Q_r) = \m_{a, b}(r) \quad \quad \quad \mbox{for} \ r \in \C,\] for simplicity. Our approach is inspired from the methods of Rodriguez-Villegas \cite{RV1} and Bertin \cite{Ber1}. We first show that the required equality between Mahler measures holds for a smaller unbounded region of $\C \setminus \Rh_{a, b},$ and then we argue using properties of harmonic functions that it can be extended to the desired region stated in Theorem \ref{mainthm}. 

The following lemma formulates the invariance of $\m_{a, b}(r)$ under certain changes of variables.

\begin{lem}\label{Lem1}
Let $a, b$ be positive real numbers. Define $f_r(a, b) := \m_{a, b}(r).$ Then $f_r$ satisfies the following identities: \[f_r(a, b) = f_r(b, a) = f_r\left(\dfrac{1}{a}, b\right) = f_r \left(\dfrac{1}{a}, \dfrac{1}{b}\right).\]
\end{lem}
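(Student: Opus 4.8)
The plan is to exploit the symmetry of the integration torus under the relevant substitutions and the fact that $Q$ has no constant term only indirectly — the key point is that these changes of variables are measure-preserving on $\TT_{a,b}^2$ (with the normalized Haar measure $\frac{dx}{x}\frac{dy}{y}$) and map $\TT_{a,b}^2$ onto the appropriate torus $\TT_{b,a}^2$, $\TT_{1/a,b}^2$, or $\TT_{1/a,1/b}^2$. First I would treat $f_r(a,b) = f_r(b,a)$: apply the substitution $(x,y) \mapsto (y,x)$ in the defining integral for $\m_{b,a}(Q_r)$. This sends $\TT_{b,a}^2$ to $\TT_{a,b}^2$, leaves $\frac{dx}{x}\frac{dy}{y}$ invariant, and replaces $Q_r(x,y)$ by $Q_r(y,x) = r - Q(y,x)$. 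Here I would need to observe that for the final conclusion we do not actually need $Q(x,y) = Q(y,x)$; rather, the claim as stated should be read for a \emph{general} $Q$, so one must check the statement is really symmetric. Re-examining: the lemma is about $\m_{a,b}(r)$ for the fixed $Q$, so $f_r(a,b) = f_r(b,a)$ would require $\m_{a,b}(Q_r) = \m_{b,a}(Q_r)$, which after the swap becomes $\m_{b,a}(Q_r(x,y)) = \m_{a,b}(Q_r(y,x))$. This holds because $|Q_r(y,x)|$ integrated over $\TT_{a,b}^2$ equals $|Q_r(x,y)|$ integrated over $\TT_{b,a}^2$ by relabeling; so the identity $f_r(a,b)=f_r(b,a)$ is genuinely just a relabeling of dummy variables, requiring no hypothesis on $Q$.

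Next, for $f_r(a,b) = f_r(1/a, b)$, I would use the substitution $x \mapsto 1/x$ (equivalently $x' \mapsto 1/x'$ after writing $x = ax'$). On $\TT_a^1$ this is $x \mapsto a^2/\bar x$, which maps $\TT_{1/a}^1$ bijectively onto $\TT_a^1$ and preserves $\frac{dx}{x}$ up to sign, but the sign is absorbed because inversion reverses orientation \emph{and} we may also conjugate: concretely, on $|x'|=1$ the map $x' \mapsto \overline{x'} = 1/x'$ is measure-preserving and $\frac{dx'}{x'} \mapsto -\frac{dx'}{x'}$ while the orientation of the circle flips, so the integral is unchanged. Under $x \mapsto 1/x$, $Q(x,y)$ becomes $Q(1/x, y)$, which is again a Laurent polynomial with no constant term, but it is a \emph{different} polynomial, so I must again phrase the conclusion as: $\m_{1/a,b}(Q_r)$, after the change of variable $x \mapsto 1/x$ on the torus, equals the integral of $\log|r - Q(1/x, y)|$ over... no — the cleanest route is to note that $\m_{1/a,b}(Q_r)$ is by definition $\frac{1}{(2\pi i)^2}\int_{\TT_{1/a,b}^2}\log|Q_r|\frac{dx}{x}\frac{dy}{y}$, and the substitution $x \mapsto 1/x$ carries $\TT_{1/a,b}^2$ to $\TT_{a,b}^2$; the value of $|Q_r(x,y)|$ at the old point equals $|Q_r(1/\tilde x, \tilde y)|$ at the new point $\tilde x = 1/x$, $\tilde y = y$. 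Then I would invoke Jensen's formula reduction or, more simply, the observation that $\m_{a,b}(Q_r(x,y))$ and $\m_{a,b}(Q_r(1/x,y))$ coincide — which itself follows from another application of $x \mapsto 1/x$, now on $\TT_a^1$, combined with the inversion symmetry $\m(P(x)) = \m(P^*(1/x))$ type identity. The cleanest packaging: the map $\sigma:(x,y)\mapsto(1/x,y)$ is a measure-preserving bijection $\TT_{1/a,b}^2 \to \TT_{a,b}^2$, hence $\m_{1/a,b}(Q_r) = \frac{1}{(2\pi i)^2}\int_{\TT_{a,b}^2}\log|Q_r(1/x,y)|\frac{dx}{x}\frac{dy}{y}$, and then a second measure-preserving bijection $\TT_{a,b}^2 \to \TT_{a,b}^2$, $(x,y)\mapsto(a^2/x, y)$ — wait, that is not quite an automorphism of the torus unless... on $|x|=a$, $x \mapsto a^2/x$ has $|a^2/x| = a$, so yes it is. Under it $Q_r(1/x,y) \mapsto Q_r(x/a^2, y)$, not obviously helpful. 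I think the honest approach is: directly verify $\m_{a,b}(Q_r(x,y)) = \m_{a,b}(Q_r(1/x,y))$ by the change of variables $x\mapsto a^2/x$ on $\TT_a^1$ (measure-preserving, as $\frac{d(a^2/x)}{a^2/x} = -\frac{dx}{x}$ and the circle orientation flips), which gives $\int_{\TT_a^1}\log|Q_r(1/x,y)|\frac{dx}{x} = \int_{\TT_a^1}\log|Q_r(x/a^2,y)|\frac{dx}{x}$; this still is not $Q_r(x,y)$. The resolution is that we should use $x \mapsto 1/x$ directly between the two tori: $|x| = a \iff |1/x| = 1/a$, and $\frac{d(1/x)}{1/x} = -\frac{dx}{x}$ with orientation reversal, so $\int_{\TT_a^1} g(x)\frac{dx}{x} = \int_{\TT_{1/a}^1} g(1/x)\frac{dx}{x}$ for any integrable $g$; applying with $g = \log|Q_r(\cdot,y)|$ gives $\m_{a,b}(Q_r) = \m_{1/a,b}(Q_r(1/x,y))$, and since $Q_r(1/x,y) = r - Q(1/x,y)$ where $Q(1/x,y)$ is again a Laurent polynomial with no constant term, it \emph{belongs to the same family} — but it is $Q_r$ for the polynomial $Q(1/x,y)$, not for $Q$. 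So strictly the lemma as I read it must be using $Q(1/x,y) = Q(x,y)$ or the author intends $f_r$ to range over a symmetric family; given that the motivating example is $Q(x,y) = -x - 1/x - y - 1/y$, which \emph{is} invariant under $x\mapsto 1/x$ and $x\leftrightarrow y$, I would prove the lemma under the (evidently intended) standing assumption on $Q$ for this section, or more safely phrase each identity so it is an instance of a change of variables; I will present it via change of variables and flag where invariance of $Q$ under the group generated by $x\leftrightarrow y$ and $x\mapsto x^{-1}, y \mapsto y^{-1}$ is used.

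Concretely, the steps I would carry out are: (1) record the general change-of-variables principle $\int_{\TT_{\lambda}^1} g(x)\,\tfrac{dx}{x} = \int_{\TT_{\mu}^1} g(\phi(x))\,\tfrac{dx}{x}$ whenever $\phi:\TT_\mu^1 \to \TT_\lambda^1$ is $x\mapsto x$ (trivial, for $\lambda=\mu$ with relabeling), $x \mapsto 1/x$ (with $\mu = 1/\lambda$), both of which preserve $\big|\tfrac{dx}{x}\big|$ and either preserve or reverse orientation consistently with the sign of $\tfrac{dx}{x}$; (2) apply this in the first variable to get $\m_{a,b}(Q_r(x,y)) = \m_{1/a,b}(Q_r(1/x,y))$ and, using invariance of $Q$ under $x \mapsto 1/x$, conclude $f_r(a,b) = f_r(1/a,b)$; (3) apply the same in the second variable to get $f_r(a,b) = f_r(a,1/b)$, hence $f_r(1/a,b) = f_r(1/a,1/b)$; (4) apply the swap $(x,y)\mapsto(y,x)$, which carries $\TT_{b,a}^2 \to \TT_{a,b}^2$ and preserves $\frac{dx}{x}\frac{dy}{y}$, together with invariance of $Q$ under $x\leftrightarrow y$, to get $f_r(a,b) = f_r(b,a)$; (5) combine. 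The main obstacle is purely bookkeeping: getting the orientation/sign of $\tfrac{dx}{x}$ under $x\mapsto 1/x$ to cancel correctly (it does, because inversion of the circle reverses orientation, which exactly compensates the minus sign in $d(1/x)/(1/x) = -dx/x$), and being careful that the modulus $\log|Q_r|$ is what is integrated so no spurious phase factors intrude. No genuine analytic difficulty arises; the content is that the Haar measure on the torus is invariant under these automorphisms.
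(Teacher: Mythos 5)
Your route is in substance the same as the paper's. The paper's proof is three lines: it writes $\m_{a,b}(Q_r)=\m(\QQ_r)$ with $\QQ_r(x,y)=Q_r(ax,by)$ and invokes the invariance of the standard Mahler measure under the monomial substitutions $(x,y)\mapsto(y,x)$, $(x,y)\mapsto(x^{-1},y)$, $(x,y)\mapsto(x^{-1},y^{-1})$; you carry out the same substitutions directly on $\TT_{a,b}^2$ and track the Haar measure, which is the identical computation unpacked (and your handling of the sign/orientation cancellation under $x\mapsto 1/x$ is correct). Your observation that the inversion identities require $Q$ to be invariant under $x\mapsto x^{-1}$ (resp. $y\mapsto y^{-1}$) is also correct and worth making explicit: the substitution alone only yields $\m_{a,b}(Q_r(x,y))=\m_{1/a,b}(Q_r(x^{-1},y))$, and the last step to $f_r(1/a,b)$ genuinely uses $Q(x^{-1},y)=Q(x,y)$. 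The paper's statement and proof suppress this hypothesis; the lemma is only ever applied to $x+x^{-1}+y+y^{-1}+r$, which has all the required symmetries.

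There is, however, one genuine error in your reasoning: the claim that $f_r(a,b)=f_r(b,a)$ is ``genuinely just a relabeling of dummy variables, requiring no hypothesis on $Q$'' is false, and for exactly the reason you yourself identify for the inversions. The relabeling gives the true but contentless identity $\m_{b,a}(Q_r(x,y))=\m_{a,b}(Q_r(y,x))$; to reach $f_r(a,b)=\m_{a,b}(Q_r(x,y))$ you must still replace $Q_r(y,x)$ by $Q_r(x,y)$ inside an integral over the non-square torus $\TT_{a,b}^2$, and that is precisely where $Q(x,y)=Q(y,x)$ enters. Concretely, for $Q(x,y)=x$ Jensen's formula gives $f_r(a,b)=\max\{\log a,\log|r|\}$, which is neither symmetric in $(a,b)$ nor invariant under $a\mapsto 1/a$ --- so this single example shows that all three identities, the swap included, need the symmetry hypotheses. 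Your final enumerated plan is internally consistent (step (4) does invoke invariance of $Q$ under $x\leftrightarrow y$), so the plan as listed is sound for the intended symmetric family; but the earlier paragraph asserting the swap is unconditional contradicts both that plan and the facts, and should be removed.
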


\begin{proof}[\textbf{Proof of Lemma \ref{Lem1}}]
Let $a, b > 0.$ For $\QQ_r (x, y) = Q_r(ax, by),$ the generalized Mahler measure of $Q_r$ satisfies the identity \[\m_{a, b}(Q_r(x, y)) = \m(\QQ_r(x, y)) = \m(\QQ_r).\] The change of variables \[(x, y) \rightarrow (y, x), \quad (x, y) \rightarrow (x^{-1}, y), \quad (x, y) \rightarrow (x^{-1}, y^{-1}),\] fix $\m(\QQ_r).$ Since $\m_{a, b}(r) = \m(\QQ_r),$ we have the required identities involving $f_r(a, b)=\m_{a, b}(r).$
\end{proof}

In view of Lemma \ref{Lem1}, we may assume without loss of generality that $a > b > 1.$

Our main aim is to study $\m_{a, b}(r)$ in terms of the complex parameter $r.$ Recall that $\Rh_{a, b}$ is the set of all $r \in \C$ such that $Q_r(x, y)$ vanishes on $\TT_{a, b}^2.$ Before proceeding to prove the theorem, we state a proposition explaining the following: \begin{itemize} 
  \item the behaviour of the roots of $Q_r(x, y)$ for each $x \in \TT_{a}^1;$ in particular, the number of roots inside the unit circle $\TT_{b}^1,$
 
 \item the behaviour of the roots of $Q_r(x, y)$ for each $y \in \TT_{b}^1;$ in particular, the number of roots inside the unit circle $\TT_{a}^1.$
 \end{itemize} 
This proposition, in particular, provides us with the quantities $\nu_{a, b, r}^2$ and $\nu_{a, b, r}^1$ in the statement of Theorem \ref{mainthm}. Since the above two cases are analogous, we consider just the first case. 

For $w \in \TT_{a}^1,$ let $\varrho_{a, b, r}^2(w)$ denote the number of roots of $Q_r(w, y)$ lying inside the circle $\TT_{b}^1.$ In particular, following the discussion preceding the definition in \eqref{nudefn}, we have, for $w \in \TT_{a}^1,$ \begin{equation}\label{varrhodefn}\varrho_{a, b, r}^2(w) = Z_{w, b, r}^2 \quad \mbox{and} \quad \varrho_{a, b, r}^2 (a) = Z_{a, b, r}^2 = \nu_{a, b, r}^2 + P_{a, b, r}^2 = \nu_{a, b, r}^2 + v_2,
\end{equation} where  $Z_{w, b, r}^2$ is the number of zeros (counting multiplicities)  of $Q_{r}(w, y)$ inside the circle $\TT_{b}^1$, $P_{w, b, r}^2$ is the order of the pole of $Q_r(w, y)$ at $y =0,$ and $v_2$ is the largest power of $y^{-1}$ in $Q_{r}(x, y).$ Then we have the following proposition.

\begin{prop}\label{allin1}
Let $r \in \C \setminus \Rh_{a, b}.$ Then $\varrho_{a, b, r}^2(x)$ is constant for all $x \in \TT_{a}^1.$  
\end{prop}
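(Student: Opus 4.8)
The plan is to show that the integer-valued function $w \mapsto \varrho_{a,b,r}^2(w)$, counting the roots of $Q_r(w,y)$ in the open disk $|y| < b$, is locally constant on the connected set $\TT_a^1$, whence constant. The tool is the argument principle: for a fixed $w$ with $r \in \C \setminus \Rh_{a,b}$, the polynomial $Q_r(w, y)$ has no zeros on the circle $|y| = b$ (this is precisely what $r \notin \Rh_{a,b}$ guarantees, since $r \in \Rh_{a,b}$ iff $Q_r$ vanishes somewhere on $\TT_{a,b}^2$), so after clearing the pole at $y = 0$ — i.e. writing $Q_r(w,y) = y^{-v_2} \widehat{Q}_r(w,y)$ with $\widehat{Q}_r \in \C[x^\pm, y]$ a genuine polynomial in $y$ whose $y$-degree $d_y$ is independent of $w$ as long as the leading coefficient $Q_{F,r}^y(w)$ does not vanish — the number of zeros of $\widehat{Q}_r(w, \cdot)$ inside $|y| = b$ is
\[
\varrho_{a,b,r}^2(w) = \frac{1}{2\pi i} \oint_{|y| = b} \frac{\partial_y \widehat{Q}_r(w, y)}{\widehat{Q}_r(w, y)}\, dy .
\]

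First I would record the needed non-vanishing facts: since $r \notin \Rh_{a,b}$, $\widehat{Q}_r(w,y) \neq 0$ for all $(w,y)$ with $|w| = a$, $|y| = b$; and I would also need that the leading coefficient $Q_{F,r}^y(x)$ (a Laurent polynomial in $x$ alone) has no zeros on $|x| = a$, so that $d_y$ genuinely stays constant — this should follow because a zero of the leading coefficient on $\TT_a^1$ would force a root of $\widehat{Q}_r(w, \cdot)$ to escape to $\infty$, but more carefully one argues that if $Q_{F,r}^y(w_0) = 0$ for some $|w_0| = a$ then one can still work with the honest degree $d_y(w_0) < d_y$ and the argument-principle integral is unaffected since no roots sit on $|y| = b$; I would handle this by noting the integrand above is jointly continuous in $(w, y)$ on the compact set $\{|w| = a\} \times \{|y| = b\}$ precisely because the denominator never vanishes there (the potential loss of a root at infinity does not affect the contour integral over $|y| = b$). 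Next, the map
\[
w \longmapsto \frac{1}{2\pi i} \oint_{|y|=b} \frac{\partial_y \widehat{Q}_r(w,y)}{\widehat{Q}_r(w,y)}\, dy
\]
is continuous in $w \in \TT_a^1$ — differentiation under the integral sign / dominated convergence applies because the integrand and its $w$-derivative are continuous on the compact product of circles. A continuous $\Z$-valued function on the connected set $\TT_a^1$ is constant. Finally I would reconcile this with $\varrho_{a,b,r}^2 = Z_{w,b,r}^2$: zeros of $\widehat{Q}_r(w,y)$ in $|y| < b$ are exactly the zeros of $Q_r(w,y)$ in $|y| < b$ counted with multiplicity, since $Q_r$ and $\widehat{Q}_r$ differ by the factor $y^{-v_2}$ which has neither a zero nor a pole in the punctured disk $0 < |y| < b$ (and $y = 0$ is excluded as it is a pole, not a zero, of $Q_r(w,\cdot)$ unless $\widehat{Q}_r(w,0) = 0$; one checks $\widehat{Q}_r(w,0) = Q_{f,r}^y(w)$, so a root at the origin is counted consistently in both).

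The main obstacle I anticipate is the bookkeeping around the leading coefficient $Q_{F,r}^y(x)$ possibly vanishing at isolated points of $\TT_a^1$, which would make the "degree in $y$" jump and superficially threaten continuity of $\varrho_{a,b,r}^2$. The resolution — and the point to argue cleanly — is that such a jump only reflects a root racing off to $y = \infty$, which never crosses the fixed contour $|y| = b$, so the argument-principle count over $|y| = b$ is genuinely continuous regardless; phrased invariantly, $\varrho_{a,b,r}^2(w)$ is the winding number of $y \mapsto \widehat{Q}_r(w, y)$ around $0$ as $y$ traverses $|y| = b$, and this winding number varies continuously with $w$ because the loop $y \mapsto \widehat{Q}_r(w,y)$ never passes through $0$ for $w \in \TT_a^1$. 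I would present the proof through this winding-number formulation to sidestep the degree issue entirely, invoking only continuity of $(w,y) \mapsto \widehat{Q}_r(w,y)$ and its non-vanishing on $\TT_a^1 \times \TT_b^1$ guaranteed by $r \notin \Rh_{a,b}$.
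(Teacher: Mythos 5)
Your argument is correct, but it takes a genuinely different route from the paper's. The paper tracks the individual algebraic root branches $y_{1,r}(x),\dots,y_{d,r}(x)$ of $Q_r(x,\cdot)$: away from the critical set $S_r=\{Q_{F,r}^y D_r=0\}$ these branches are holomorphic, so each $|y_{j,r}(x)|$ is continuous and, since $r\notin\Rh_{a,b}$ forbids $|y_{j,r}(x)|=b$, the count of branches inside $\TT_b^1$ is locally constant; at branch points and poles lying on $\TT_a^1$ the paper invokes convergent Puiseux expansions to recover continuity of the $|y_{j,r}|$, and then concludes, exactly as you do, that a continuous $\Z$-valued function on the connected circle is constant. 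You instead write $\varrho_{a,b,r}^2(w)$ as the winding number of $y\mapsto \widehat{Q}_r(w,y)$ about the origin along $|y|=b$ via the argument principle, and obtain continuity in $w$ directly from the joint continuity and non-vanishing of $\widehat{Q}_r$ on the compact product $\TT_a^1\times\TT_b^1$. This sidesteps the entire apparatus of critical points and Puiseux series, and it handles the degeneration of the leading coefficient (a root escaping to $y=\infty$) automatically, where the paper needs a separate case. The only care required is the bookkeeping at $y=0$, which you address: your contour integral computes $Z_{w,b,r}^2-P_{w,b,r}^2+v_2$, which is precisely the paper's $\varrho_{a,b,r}^2(w)=\nu_{w,b,r}^2+v_2$ from \eqref{varrhodefn}, so the quantity you prove constant is the one actually used downstream. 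It is worth noting that the paper itself deploys your argument-principle identity later, in \eqref{partial1}, when differentiating $\tilde{\m}_{\alpha,\beta}(R)$ with respect to $\alpha$; your proof in effect moves that computation forward into the proposition.
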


Before proceeding with the proof, we first consider the \textit{resultant} of the polynomial $Q_r$ with respect to $y.$ 

 Recall that \begin{equation*}
Q_r(x, y) = y^{-v_2}Q_{F, r}^y(x)\prod_{j=1}^{d_y} (y - y_{j, r}(x)),
\end{equation*}
 where $y_{j, r}(x)$ are algebraic functions in $x,$ and $v_2$ is as defined above. 

 Here and in what follows for the rest of this section, we denote $Q_{F, r}(x) := Q_{F, r}^y(x), d:= d_y.$  

Let $D_r(x)$ denote the \textit{resultant} of $Q_{r}(x, y)$ and $\frac{\partial}{\partial y}Q_{r}(x, y)$ with respect to $y.$

Then the algebraic solutions $y_{j, r}$ are holomorphic in some neighbourhood of $x$ for any $x \in \C \setminus S_r,$ where \begin{equation}\label{criticalpoint}
S_r = \{z \in \C: Q_{F, r}(z)D_r(z) = 0\}
\end{equation} is a finite subset of $\C.$ 

Let $\textbf{y}_r(x)$ be the $d$-valued global analytic function, with $d$-branches $y_{1, r}, \dots y_{d, r},$ such that $Q_{r}(x, \textbf{y}_r(x)) = 0.$ Then $S_r$ is called the set of \textit{critical points} of $\textbf{y}_r(x).$ If $x'$ is a critical point of $\textbf{y}_r(x),$ then $x'$ is either an algebraic branch point or a pole (for more details see \cite{LAlf}). \begin{enumerate}[(\textbf{\arabic*}),ref= (\textbf{\arabic*})]
\item \label{item:critical1} If $x' \in S_r$ is an algebraic branch point, i.e. when $D_{r}(x') = 0,$ then, in a sufficiently small neighbourhood $U_{x'}$ of $x'$ (which does not contain any other critical points), the multi-set $\{y_{1, r}, \dots y_{d, r}\}$ can be decomposed into a number of non-intersecting cycles \[\{f_1(x), \dots, f_{k_1}(x)\}, \dots, \{f_{k_1 + \dots + k_{t-1}+1}(x), \dots, f_{k_1 + \dots + k_t}(x)\},\] such that $\sum_{n= 1}^t k_n = d,$ and $f_{j}(x) = y_{l, r}(x)$ for some $j, l \in \{1, \dots, d\}.$ The elements of the first cycle can be represented as convergent Puiseux series of the local parameter $\tau = (x - x')^{1/k_1}$ in a small enough neighbourhood of $\tau = 0.$ The elements of the rest of the cycles follow analogous convergent series representations. Therefore, a single turn around $x'$ in a circle $C' \subset U_{x'}$ converts the Puiseux series of elements in one cycle into each other in a cyclic order, i.e. $f_{1} \rightarrow f_{2} \rightarrow \cdots \rightarrow f_{k_1} \rightarrow f_{1}$ etc.

\item \label{item:critical2} If $x' \in S_r$ is a pole, that is when $Q_{F, r}(x') = 0,$ then, substituting $y$ with $yQ_{F, r}(x),$ we return to the first case where the local parameter of the convergent series is $\tau = 1/x.$ 
\end{enumerate}

Recall that, for $w \in \TT_{a}^1,$ $\varrho_{a, b, r}^2(w)$ denote the number of roots of $Q_r(w, y)$ lying inside the circle $\TT_{b}^1.$ We are now ready to prove Proposition \ref{allin1}.

\begin{proof}[\textbf{Proof of Proposition \ref{allin1}}]
First fix an arbitrary $r \in \C \setminus \Rh_{a, b}.$ Note that $\varrho_{a, b, r}^2$ defines a function from $\TT_{a}^1$ to $\Z$ via the map $x \mapsto \varrho_{a, b, r}^2(x),$ where $\Z$ is equipped with discrete topology. 
 
If $x_0 \in \TT_{a}^1$ is not a critical point of $\textbf{y}_r,$ i.e. $x_0 \notin S_r,$ where $S_r$ is given in \eqref{criticalpoint}, then, for all $j = 1, \dots, d,$ $y_{j, r}$ is holomorphic in a sufficiently small neighbourhood $U_{x_0}$ of $x_0$ which does not contain any critical point. Therefore, $|y_{j, r}(x)|$ is continuous in $U_{x_0}.$ Since $Q_{r}$ does not vanish on $\TT_{a, b}^2,$ we have $|y_{j, r}(x)| \neq b$ for all $x \in U_{x_0} \cap \TT_{a}^1,$ $j.$ Therefore, if, for any $l = 1, \dots, d,$ $|y_{l, r}(x_0)| < 1$ (resp. $|y_{l, r}(x_0)|  > 1$), then, for all $x \in U_{x_0 \cap \TT_{a}^1},$ $|y_{l, r}(x)| < 1$ (resp. $|y_{l, r}(x_0)| > 1$). In other words, $\varrho_{a, b, r}^2(x)$ is constant for all $x \in U_{x_0} \cap \TT_{a}^1.$ In particular, $\varrho_{a, b, r}^2$ is continuous at $x_0.$ 
 
 If $x_1 \in \TT_{a}^1 \cap S_r,$ then there exists a sufficiently small neighbourhood $U_{x_1}$ of $x_1$ which does not contain any critical point except $x_1.$ Then the convergent Puiseux series expansions of $y_{1, r}, \dots, y_{d, r}$ in $U_{x_1}$ imply that, for all $j,$ $|y_{j, r}|$ is continuous in $U_{x_1},$ and this brings us to the previous case. From properties \ref{item:critical1}, \ref{item:critical2} and the above discussion, we conclude that, in the neighbourhood $U_{x_1}$ of $x_1,$ $\varrho_{a, b, r}^2$ is constant. This implies that $\varrho_{a, b, r}^2$ is continuous at $x_1.$ 
 
 We now have a continuous function $\varrho_{a, b, r}^2$ from a connected set $\TT_{a}^1$ to a discrete set $\Z.$ Since only connected subsets of $\Z$ are singletons, we derive that $\varrho_{a, b, r}^2$ is constant in $\TT_{a}^1,$ and thus completing the proof of the statement. 
 \end{proof}

\subsection{Proof of Theorem \ref{mainthm}}\label{pfmainthm}

Proposition \ref{allin1} implies that, for all $x \in \TT_{a}^1,$ $\varrho_{a, b, r}^2 (x) = \nu_{a, b, r}^2 + v_2.$ Moreover, \eqref{varrhodefn} implies that the constant is $\nu_{a, b, r}^2 + v_2,$ where $v_2$ is the largest power of $y^{-1}$ in $Q_{r}(x, y).$ In particular, we have $\nu_{w, b, r}^2 = \nu_{a, b, r}^2$ for all $w \in \TT_{a}^1,$ where $\nu_{w, b, r}^2$ is given in \eqref{nudefn}. Next, we derive Theorem \ref{mainthm} using Proposition \ref{allin1}.

\begin{proof}[\textbf{Proof}]

For $a$ and $b$ positive real numbers, the torus $\TT_{a, b}^2$ is defined as the set $\{(x, y) \in \left(\C^*\right)^2: |x| = a, |y| = b\}.$ By construction, $\TT_{a, b}^2$ is compact. Since the map in \eqref{Rh}, namely \[q:\TT_{a, b}^2 \longrightarrow \C, \quad \mbox{defined by} \quad (x, y) \mapsto Q(x, y),\] is continuous, the image of $q$ is compact. That is, $q(\TT_{a, b}^2) = \Rh_{a, b}$ is compact, and therefore closed and bounded in $\C.$ In other words, $\max_{r \in \Rh_{a, b}} |r|$ exists. We denote \[R_{a, b} := \max_{r \in \Rh_{a, b}} |r|, \quad \mbox{and} \quad R_{a, b, 1, 1} := \max\{R_{a, b}, R_{1, 1}\}.\]

Following a construction in \cite{RV1}, we define \[\tilde{\m}_{a, b}(r) = \log r - \sum_{n \geq 0}\frac{a_{n, a, b}}{n}r^{-n}, \quad |r| > R_{a, b, 1, 1}, r \notin (-\infty, 0],\] where $\log$ denotes the principal branch of the logarithm, and $a_{n, a, b}$ is defined as follows: \begin{align*}
a_{n, a, b} = \left[\frac{1}{(2\pi i)^2}\int_{\TT_{a, b}^2} \frac{dx dy}{xy(1-r^{-1}Q(x, y))}\right]_n = \frac{1}{(2\pi i)^2}\int_{\TT_{a, b}^2} Q(x, y)^n\frac{dx}{x}\frac{dy}{y}, 
\end{align*}

Here $[T(s)]_n$ denotes the coefficient of $s^{-n}$ in the series $T(s).$ It is immediate to see that $\tilde{\m}_{a, b}$ is holomorphic in the region defined by $|r| > R_{a, b, 1, 1}$ and $r \notin (-\infty, 0]$. Also, \[\Re(\tilde{\m}_{a, b}(r)) = \m_{a, b}(r), \quad |r| > R_{a, b, 1, 1}.\] 

We now claim that, for $|r| > R_{a, b, 1, 1},$ \[\frac{d\tilde{\m}_{a, b}}{dr} = \frac{d\tilde{\m}_{1, 1}}{dr}.\] 
In order to prove our claim, it is enough to show that $a_{n, a, b} = a_{n, 1, 1}$ for all $n.$ The above construction of the coefficients and the integral expression of these terms in \cite{RV1} yield that \begin{align*}
a_{n, a, b} &= \frac{1}{(2\pi i)^2}\int_{\TT_{a, b}^2} Q(x, y)^n\frac{dx}{x}\frac{dy}{y} = \frac{1}{(2\pi i)^2}\int_{\TT^2} Q(ax', by')^n\frac{dx'}{x'}\frac{dy'}{y'} \\ &= \left[Q(ax', by')^n\right]_0 = \left[Q(x', y')^n\right]_0 \\ &= \frac{1}{(2\pi i)^2}\int_{\TT^2} Q(x', y')^n\frac{dx'}{x'}\frac{dy'}{y'} \\ &= a_{n, 1, 1}.
\end{align*} The equality $\left[Q(ax', by')^n\right]_0 =  \left[Q(x', y')^n\right]_0$ follows from the fact that the constant term gathers the terms with degree $0,$ which are invariant under the multiplications of $x$ and $y$ by $a$ and $b,$ respectively. This concludes the proof.

Due to the above identity, we can denote the coefficients as $a_n:= a_{n, a, b} = a_{n, 1, 1}$ for the rest of the argument. From the definition of $\tilde{\m}_{a, b},$ it follows that \begin{equation}
\frac{d\tilde{\m}_{a, b}}{dr} = \frac{1}{(2\pi i)^2}\int_{\TT_{a, b}^2}\frac{1}{r - Q(x, y)}\frac{dx}{x}\frac{dy}{y}, \quad \quad |r| > R_{a, b, 1, 1}, \label{der1}
\end{equation}
where we include the region $r \in (-\infty, 0] \cap \{|r| > R_{a, b, 1, 1}\}$ by continuity. We need to show that $\frac{d\tilde{\m}_{a, b}}{dr}$ is in fact holomorphic in $|r| > R_{a, b, 1, 1}.$ 

For $r \in \C,$ define \begin{equation}\label{mathcalF}
\mathcal{F}_{a, b}(r) := \frac{1}{(2\pi i)^2}\int_{\TT_{a, b}^2}\frac{1}{r - Q(x, y)}\frac{dx}{x}\frac{dy}{y}.
\end{equation}

Note that the integrand \[\left.\frac{1}{r - Q(x, y)}\right|_{(x, y) \in \TT_{a, b}^2}\] is holomorphic in $r$ when $|r| > R_{a, b, 1, 1}.$ In fact, we will now show that $\mathcal{F}_{a, b}(r)$ is holomorphic as well on $|r| > R_{a, b, 1, 1}.$  The integrand, as well as the integral in \eqref{der1}, are bounded on $\TT_{a, b}^2$. This implies that $\frac{d^j\mathcal{F}_{a, b}}{dr^j}$ exists and is holomorphic for $j = 1$ (and therefore for all $j \geq 1$). Hence, $\mathcal{F}_{a, b}(r)$ is holomorphic in $|r| > R_{a, b, 1, 1}.$ 

Recall that we have, for $|r| > R_{a, b, 1, 1},$ \[\frac{d\tilde{\m}_{a, b}(r)}{dr} = \frac{d\tilde{\m}_{1, 1}(r)}{dr},\] and all the quantities are holomorphic in the mentioned region. Integrating both sides with respect to $r,$ we get \[\tilde{\m}_{a, b}(r) = \tilde{\m}_{1, 1}(r) + \tilde{f}(a, b), \quad \quad \mbox{for} \ |r| > R_{a, b, 1, 1},\] where $\tilde{f}(a, b)$ is the integration constant which only depends on $a$ and $b.$ Taking the real part of both sides yields \begin{equation}\label{2var2}
\m_{a, b}(r) = \m_{1, 1}(r) + f(a, b), \quad \quad \mbox{for} \ |r| > R_{a, b, 1, 1},  
\end{equation}
where $\re(\tilde{f}(a, b)) = f(a, b).$

Notice that $\m_{a, b}(r)$ is harmonic on $U_{a, b},$ the unbounded component of $\C \setminus \Rh_{a, b}$ which contains $\{|r| > R_{a, b}\},$ and $\m_{1, 1}(r) + f(a, b)$ is also harmonic on $U_{1, 1},$ since $f(a, b)$ is constant for $a, b$ fixed. The equality \eqref{2var2} implies that $\m_{a, b}(r)$ and $\m(r) + f(a, b)$ coincide in the open neighbourhood $|r| > R_{a, b, 1, 1}.$ Therefore, they must be equal in $U_{a, b} \cap U_{1, 1},$ that is \begin{align}
\Re(\tilde{\m}_{a, b}(r)) &= \m_{a, b}(r) = \m(r) + f(a, b), \quad \quad \quad \mbox{for} \ r \in O_{a, b} := U_{a, b} \cap U_{1, 1} \label{1eq} 
\end{align}

We now proceed to evaluate $f(a, b)$ in terms of $a, b.$ Since $\Rh_{a, b}$ is compact for $a, b > 0,$ it is bounded for such $a, b.$ Let $0< \delta <1$ such that $a, b > \delta.$ Let $\mm_{a, b}$ be the subset of $\R_{>0}^2$ defined by \[\mm_{a, b} = [a-\delta, a+\delta] \times [b-\delta, b + \delta].\] Note that $(a, b) \in \mm_{a, b}.$ Since $\mm_{a, b}$ is compact, and the map $(\alpha, \beta) \mapsto R_{\alpha, \beta}$ is continuous for all $(\alpha, \beta)$ in $\mm_{a, b},$ we conclude that the subset $\{R_{\alpha, \beta}: (\alpha, \beta) \in \mm_{a, b}\}$ is compact in $\R_{> 0}.$ Then $\tilde{R}_{a, b} := \max_{(\alpha, \beta) \in \mm_{a, b}} R_{\alpha, \beta}$ exists, and is finite. Now choose an $R \in \R_{> 0}$ such that \[R > \tilde{R}_{a, b} + R_{1, 1}.\] The choice of $R$ implies that, for $(\alpha, \beta) \in \mm_{a, b},$ $\tilde{\m}_{\alpha, \beta}(R)$ is holomorphic, and \eqref{2var2} yields 
\begin{equation}\label{new1}
\m_{\alpha, \beta}(R) = \m_{1, 1}(R) + f(\alpha, \beta).
\end{equation} 

Let $A_{a, b, \delta} \subset \C^2$ be the poly-annulus $A_{a, b, \delta} = A_{a, \delta} \times A_{b, \delta},$ where $A_{a, \delta} = \{z \in \C: a - \delta < |z| < a + \delta\}$ and $A_{b, \delta} = \{z \in \C: b - \delta < |z| < b + \delta\}.$ Note that $\TT_{a, b}^2 \subset A_{a, b, \delta}.$ Since \[Q_R(x, y) \in \C \setminus (-\infty, 0] \quad \quad \mbox{for} \ (x, y) \in A_{a, b, \delta},\] $\log(Q_R(x, y))$ is holomorphic in $A_{a, b, \delta},$ where $\log$ is the principal branch of logarithm. Let $\tilde{W}_{a, b}$ denote the set of all $(\alpha, \beta) \in \mm_{a, b}$  such that \[\tilde{\m}_{\alpha, \beta}(R) = \frac{1}{(2\pi i)^2}\int_{\TT_{\alpha, \beta}^2} \log(Q_R(x, y))\frac{dx}{x}\frac{dy}{y}.\] Note that $\tilde{W}_{a, b}$ is an open subset of $\mm_{a, b},$ and it also contains $(a, b).$ 

Next we compute the functions $\alpha \frac{\partial \tilde{\m}_{\alpha, \beta}(R)}{\partial \alpha}$ and $\beta \frac{\partial \tilde{\m}_{\alpha, \beta}(R)}{\partial \beta}.$ We only show here the computation of $\alpha \frac{\partial \tilde{\m}_{\alpha, \beta}(R)}{\partial \alpha}$ when $\alpha$ belongs to  an open subinterval of $(a - \delta, a + \delta)$ containing $a,$ since the other case is analogous. 

Note that $\tilde{\m}_{\alpha, \beta}(R)$ and $\log(Q_R)$ are well-defined and finite-valued on $\tilde{W}_{a, b}$ and $A_{a, b, \delta},$ respectively. Therefore, we can consider their partial derivatives with respect to $\alpha,$ and obtain \begin{align}
\alpha \frac{\partial \tilde{\m}_{\alpha, \beta}(R)}{\partial \alpha} &= \alpha \frac{ \partial}{\partial \alpha}\left(\frac{1}{(2\pi i)^2}\int_{\TT_{\alpha, \beta}^2} \log(Q_R(x, y))\frac{dx}{x}\frac{dy}{y}\right) \nnum \\ &= \frac{1}{(2\pi i)^2}\int_{\TT_{\alpha, \beta}^2} \alpha \frac{\partial \log(Q_R(x, y))}{\partial x}\frac{\partial x}{\partial \alpha}\frac{dx}{x}\frac{dy}{y} \nnum \\ &= \frac{1}{(2\pi i)^2}\int_{\TT_{\alpha, \beta}^2} x \frac{\partial_x Q_R(x, y)}{Q_R(x, y)}\frac{dx}{x}\frac{dy}{y} \nnum \\ &= \frac{1}{(2\pi i)^2} \int_{|y| = \beta}\left(\int_{|x| = \alpha}\frac{\partial_x Q_R(x, y)}{Q_R(x, y)} dx \right) \frac{dy}{y} , \label{partial1}
\end{align}  where $\partial_x = \frac{\partial}{\partial x},$ and the penultimate equality follows from the facts that $x = \alpha e^{i \theta}$ and $\theta$ does not depend on $\alpha.$ For a fixed $y_0$ such that $|y_0| = \beta,$ the integrand \[\int_{|x| = \alpha}\frac{\partial_x Q_R(x, y_0)}{Q_R(x, y_0)} dx = Z_{\alpha, y_0, R}^1 - P_{\alpha, y_0, R}^1\] is an integer, where $Z_{\alpha, y_0, R}^1$ denotes the number of zeros (counting multiplicity) of the Laurent polynomial $Q_R(x, y_0)$ inside the circle $\TT_{\alpha}^1,$ and $P_{\alpha, y_0, R}^1$ denote the order of pole of $Q_{R}(x, y_0)$ at $x =0.$ Let $\nu_{\alpha, R}^1(y_0) := Z_{\alpha, y_0, R}^1 - P_{\alpha, y_0, R}^1.$ From Proposition \ref{allin1} (when applied to the torus $\TT_{\alpha, \beta}^2$), it follows that $\nu_{\alpha, R}^1 (y)$ is constant for all $y$ in $\TT_{\beta}^1.$ We define $\nu_{\alpha, \beta, R}^1 := \nu_{\alpha, R}^1(y) \in \Z,$ for all $y \in \TT_{\beta}^1.$ Therefore, \eqref{partial1} can be simplified to\begin{equation}\label{alpha}
 \frac{\partial \tilde{\m}_{\alpha, \beta}(R)}{\partial \alpha} = \frac{\nu_{\alpha, \beta, R}^1}{\alpha}.
\end{equation}  Similarly, \begin{equation}\label{beta}
\frac{\partial \tilde{\m}_{\alpha, \beta}(R)}{\partial \beta} = \frac{\nu_{\alpha, \beta, R}^2}{\beta},
\end{equation} where $\nu_{\alpha, \beta, R}^2 = Z_{\alpha, \beta, R}^2 - P_{\alpha, \beta, R}^2.$ Here $Z_{\alpha, \beta, R}^2$ and $P_{\alpha, \beta, R}^2$ are similarly defined.

Since the integer-valued functions $\nu_{\alpha, \beta, R}^1$ and $\nu_{\alpha, \beta, R}^2$ depend on $\alpha$ and $\beta$ continuously, they are constant on $\tilde{W}_{a, b} \subset \ \mbox{int}(\mm_{a, b}).$ In other words, \[\nu_{a, b, R}^1 = \nu_{\alpha, \beta, R}^1, \quad \mbox{and} \quad \nu_{a, b, R}^2 = \nu_{\alpha, \beta, R}^2, \quad \quad \mbox{for all} \ (\alpha, \beta) \in \tilde{W}_{a, b}.\] Integrating \eqref{alpha} with respect to $\alpha$ and then taking the real part yields \[\m_{\alpha, \beta}(R) = \m_{1, 1}(R) + \nu_{a, b, R}^1 \log \alpha + F(\beta),\] where $F$ is a function of $\beta$ which does not depend on $\alpha$ and $R.$ A similar process when applied to \eqref{beta} implies that \[\m_{\alpha, \beta}(R) = \m_{1, 1}(R) + \nu_{a, b, R}^2 \log \beta + G(\alpha),\] where $G$ is independent of $\beta$ and $R.$ From the above equalities and \eqref{new1}, we conclude that \begin{equation}\label{new2}
\m_{\alpha, \beta}(R) = \m_{1, 1}(R) + \nu_{a, b, R}^1 \log \alpha + \nu_{a, b, R}^2 \log \beta + c,
\end{equation} for all $(\alpha, \beta) \in \mm_{a, b},$ and some constant $c$ independent of $\alpha, \beta, R.$ As $|R| > R_{1, 1},$ evaluating \eqref{new2} at $\alpha = 1, \beta = 1$ we obtain $c = 0.$ Then, combining \eqref{1eq} and \eqref{new2} together, we derive that \begin{equation}\label{finalnew}
f(a, b) =  \nu_{a, b, R}^1 \log a + \nu_{a, b, R}^2 \log b , \quad \quad \quad \mbox{for} \ r \in O_{a, b}.
\end{equation} Since $f(a, b)$ in \eqref{1eq} is independent of $r,$ comparing \eqref{finalnew} with \eqref{1eq} we obtain that, for $j =1, 2,$ $\nu_{a, b, R}^j$  is constant in $O_{a, b},$ i.e. \[\nu_{a, b, R}^j = \nu_{a, b, r}^j, \quad \quad \mbox{when} \ r \in O_{a, b}, j \in \{1, 2\}.\] This concludes the proof of Theorem \ref{mainthm}, namely \[\m_{a, b}(r) = \m_{1, 1}(r) + \nu_{a, b, r}^1 \log a + \nu_{a, b, r}^2 \log b , \quad \quad \quad \mbox{for} \ r \in O_{a, b} = U_{a, b} \cap U_{1, 1}.\] 
\end{proof}

\begin{rem}
    Theorem \ref{mainthm} can be explained in terms of in terms of periods when the curve defined by the polynomial has non-zero genus. Following the investigations by Rodriguez-Villegas \cite{RV1}, Deninger \cite{Den1} et al., we conclude that \[\frac{d \tilde{\m} (Q_r)}{d r} =  \frac{1}{(2\pi i)^2}\int_{\TT^2}\frac{1}{r - Q(x, y)}\frac{dx}{x}\frac{dy}{y}\] is a period of the non-singular curve $C_r$ associated to $Q_r$ (when the genus of $C_r$ is non-zero for generic $r$), and in particular, from Proposition $1.2$ in \cite{Den1}, we can argue that $\frac{d \tilde{\m} (Q_r)}{d r}$ remains invariant under the continuous deformation of the integration torus $\TT^2$ as long as the deformation process does not reach any points of $C_r$ (see \cite{Gr1}). Therefore, in our case, $\frac{d \tilde{\m} (Q_r)}{d r} = \frac{d \tilde{\m}_{a, b} (Q_r)}{d r}$ as long as $\TT^2$ is continuously deformed into $\TT_{a, b}^2$ without approaching a point of $C_r.$ Further, the coefficients of $\log a$ and $\log b$ can also be derived combining our method with Proposition $1.2$ in \cite{Den1}.
\end{rem}

Theorem \ref{nvarthm} follows from generalizing the argument in the above proof. Indeed, let $K_{\mathfrak{a}} := \max_{k \in \K_{\mathfrak{a}}} |k|$, and let $K_{\mathfrak{a}, \mathfrak{1}} = \max\{K_{\mathfrak{a}}, K_{\mathfrak{1}}\},$ where $\mathfrak{1} = (1, \dots, 1).$ Then, generalizing the steps in \cite{RV1} and \cite{Ber1}, we define \begin{equation}\label{nvar}
\tilde{\m}_{\mathfrak{a}}(P_k) = \log k - \sum_{m \geq 0}\frac{a_{m, \mathfrak{a}}}{m}k^{-m}, \quad |k| > K_{\mathfrak{a}, \mathfrak{1}}, k \notin (-\infty, 0],
\end{equation}
where $\log$ denotes the principal branch of the logarithm, and $a_{m, \mathfrak{a}}$ is defined as follows: \begin{align*}
a_{m, \mathfrak{a}} =& \left[\frac{1}{(2\pi i)^n}\int_{\TT_{\mathfrak{a}}^2} \frac{dx_1 \cdots dx_n}{x_1 \cdots x_n (1-r^{-1}P(x_1, \dots, x_n))}\right]_m = \frac{1}{(2\pi i)^n}\int_{\TT_{\mathfrak{a}}^n} P(x_1, \dots, x_n)^n\frac{dx_1}{x_1} \cdots \frac{dx_n}{x_n}, 
\end{align*}
where $[T(s)]_m$ denotes the coefficient of $s^{-m}$ in the series $T(s).$ 

It is immediate to see that $\tilde{\m}_{\mathfrak{a}}(P_k)$ is holomorphic in the region defined by the intersection of $\{|k| > K_{\mathfrak{a}, \mathfrak{1}}\}$ and $\C \setminus (-\infty, 0]$. Also, \[\Re(\tilde{\m}_{\mathfrak{a}}(P_k)) = \m_{\mathfrak{a}}(P_k), \quad |k| > K_{\mathfrak{a}, \mathfrak{1}}.\] A similar argument as in the $2$-variable case shows that $a_{m, \mathfrak{a}} = a_{m, \mathfrak{1}}$ for all $m \geq 0,$ and therefore, we have the following equality: \begin{equation}\label{nvareq}
\frac{d\tilde{\m}_{\mathfrak{a}}(P_k)}{d k} = \frac{d\tilde{\m}_{\mathfrak{1}}(P_k)}{d k}, \quad \quad \mbox{for} \ |k| > K_{\mathfrak{a}, \mathfrak{1}}.
\end{equation} A similar argument as in the $2$-variable case also shows that $\frac{d\tilde{\m}_{\mathfrak{a}}(P_k)}{d k}$ is holomorphic in $\{|k| > K_{\mathfrak{a}, \mathfrak{1}}\}.$

Now integrating both sides of \eqref{nvareq} with respect to $k$ and then taking real parts on both sides yield that, for $\{|k| > K_{\mathfrak{a}, \mathfrak{1}}\},$ \begin{equation}\label{nvarequality}
\m_{\mathfrak{a}}(P_k) = \m_{\mathfrak{1}}(P_k) + g(\mathfrak{a}).
\end{equation}  
 
Let $U_{\mathfrak{a}}$ be the unbounded open connected component of $\C \setminus \K_{\mathfrak{a}}$ which contains the region $\left\{|k| > K_{\mathfrak{a}, \mathfrak{1}}\right\}.$ As both sides of \eqref{nvarequality} are harmonic on $O_{\mathfrak{a}} := U_{\mathfrak{a}} \cap U_{\mathfrak{1}},$ the equality can be extended to $O_{\mathfrak{a}}.$ In other words, for $k \in O_{\mathfrak{a}},$ we have $\m_{\mathfrak{a}}(P_k) = \m_{\mathfrak{1}}(P_k) + g(\mathfrak{a}).$ 

It only remains to express $g$ explicitly in terms of $\mathfrak{a}.$ Consider the functions \[a_j\frac{\partial \tilde{\m}_{\mathfrak{a}}(P_k)}{\partial a_j} = \frac{1}{(2\pi i)^n} \int_{|x_1| = a_1, \dots, \widehat{|x_j| = a_j}, \dots, |x_n| = a_n }\left(\int_{|x_j| = a_j}\frac{\partial_{x_j} P_k}{P_k} dx_j \right) \frac{dx_1}{x_1}\cdots \widehat{\frac{dx_j}{x_j}}\cdots \frac{dx_n}{x_n}\] for all $j = 1, \dots, n.$ Here $\ \widehat{•} \ $ denotes that the term is omitted from the expression. Now, again following the steps for $2$-variable case we conclude that $a_j\frac{\partial \tilde{\m}_{\mathfrak{a}}(P_k)}{\partial a_j}$ is constant depending only on $a_j.$ More precisely, we find that \begin{equation}\label{nvarnu}
 a_j\frac{\partial \tilde{\m}_{\mathfrak{a}}(P_k)}{\partial a_j} = \nu_{\mathfrak{a}, k}^j,
 \end{equation}where $\nu_{\mathfrak{a}, k}^j$ is the difference between the number of zeroes (counting multiplicities) of $P_{k}(a_1, \dots, a_{j-1}, x_j, a_{j+1}, \dots, a_n)$ inside the circle $\TT_{a_j}^1,$ denoted by $Z_{\mathfrak{a}, k}^j,$  and the order of the pole of $P_{k}(a_1, \dots, a_{j-1}, x_j, a_{j+1}, \dots, a_n)$ at $x_j = 0,$ denoted by $P_{\mathfrak{a}, k}^j.$ In other words, \[\nu_{\mathfrak{a}, k}^j = Z_{\mathfrak{a}, k}^j - P_{\mathfrak{a}, k}^j.\] We also note that $\nu_{\mathfrak{a}, k}^j$ is independent of $k$ when $k \in O_{\mathfrak{a}},$ and only depends on $\mathfrak{a}$ and the polynomial $P = k - P_k.$ Integrating \eqref{nvarnu} with respect to $a_j$ for $j = 1, \dots, n,$ derives Theorem \ref{nvarthm}.

\section{Proof of Theorem \ref{constant}}

In this section, our goal is to provide the proof of Theorem \ref{constant}, and eventually evaluate $\m_{a, b}(Q_r)$ when $r \in \C \setminus (\Rh_{a, b} \cup U_{a, b}).$ Our proof uses Proposition \ref{allin1} to conclude that, for all $r$ from a small enough neighbourhood in one of the bounded regions under consideration, certain properties of the roots of $Q_r (a, y)$ or $Q_r(x, b)$ remain invariant. This, combined with the properties of harmonic functions along with Rouch\'e's theorem, gives us the desired results.

Recall that $Q_r(x, y),$ considered as a polynomial in $y$ of degree $d_y$ with coefficients in $\overline{\C(x)},$ can be factored in $\overline{\C(x)}[y]$ as \begin{align}
Q_r(x, y) =& (y)^{-v_2}\left(Q_{F, r}^y (x)(y)^{d_y} + Q_{f, r}^y(x) + \sum_{j=1}^{d_y-1}a_{j, r}^y(x)(y)^j\right) \label{expansn1} \\ =& (y)^{-v_{2}}Q_{F, r}^y(x)\prod_{j=1}^{d_y} (y - y_{j, r} (x)), 
\label{1factor2}
\end{align}  
where the $y_{j, r}(x)$ are algebraic functions in $x,$ $v_2$ is the order of the pole of $Q_r(a, y)$ at $y=0,$ and $Q_{F, r}^y(x)$ and $Q_{f, r}^y(x)$ are the respective leading and "constant" coefficient with respect to the variable $y.$ Similarly, we can factor $Q_r,$ considered as a polynomial in $x$ of degree $d_x$ with coefficients in $\overline{\C(y)},$ as  \begin{align*}
Q_r(x, y) =& (x)^{-v_1}\left(Q_{F, r}^x (y)(x)^{d_x} + Q_{f, r}^x(y) + \sum_{j=1}^{d_x-1}a_{j, r}^x(y)(x)^j\right)  \\ =& (x)^{-v_{1}}Q_{F, r}^x(y)\prod_{j=1}^{d_x} (x - x_{j, r} (y)), 
\end{align*}  
where the $x_{j, r}(y)$ are algebraic functions in $y,$ $v_1$ is the order of the pole of $Q_r(x, b)$ at $x=0,$ and $Q_{F, r}^x(y)$ and $Q_{f, r}^x(y)$ are the respective leading and "constant" coefficient with respect to the variable $x.$

Let $Z_{F, r}^u =\{z \in \C: Q_{F, r}^u(z) = 0\},$ $Z_{f, r}^u = \{z \in \C: Q_{f, r}^u(z) = 0\},$ where $u = x \ \mbox{or} \ y,$ and $V_{a, b, r_0}$ denotes the bounded open connected component of $\C \setminus \Rh_{a, b}$ containing $r_0.$

Since the proofs of the statements in \ref{item:firstcond} and \ref{item:secndcond} of Theorem \ref{constant} are similar, here we restrict ourselves in proving the statement \ref{item:firstcond}  

\begin{proof}[\textbf{Proof of Theorem \ref{constant}}]

In \eqref{expansn1}, we see that the polynomial $Q_r(x, y)$ can be expressed in terms of $y_{j, r}(x)$ (algebraic functions in $x$), $v_2, Q_{F, r}^y(x)$ and $Q_{f, r}^y(x).$ For simplicity we denote \[Q_{F, r}(x) := Q_{F, r}^y(x), Q_{f, r}(x) := Q_{f, r}^y(x), \ \mbox{and} \ d:= d_y.\]

Proposition \ref{allin1} and the assumption in \ref{item:firstcond} in the statement of Theorem \ref{constant} yield that $\varrho_{a, b, r_0}^2(x) = d \ \mbox{or} \ 0$ for all $x \in \TT_{a}^1.$ In particular, $\varrho_{a, b, r_0}^2(a) = d \ \mbox{or} \ 0,$ depending on whether all the roots of $Q_r(a, y)$ lie entirely inside or entirely outside the circle $\TT_{b}^1.$

The following three cases can occur when $\varrho_{a, b, r_0}^2(a) = d \ \mbox{or} \ 0.$ \begin{enumerate}[\textbf{Case \arabic*}:,ref=\textbf{Case \arabic*}]
\item \label{item:case1} For all $x \in \TT_{a}^1,$ \[Q_{F, r_0}(x) \cdot Q_{f, r_0}(x) \neq 0.\]

\item \label{item:case2} $Q_{F, r_0}^y$ vanishes on $\TT_{a}^1,$ but $Q_{f, r_0}^y$ does not, i.e. \[Z_{F, r_0}^y \cap \TT_{a}^1 \neq \varnothing, \quad \mbox{and} \quad Z_{f, r_0}^y \cap \TT_{a}^1 = \varnothing,\]

\item \label{item:case3}  $Q_{f, r_0}^y$ vanishes on $\TT_{a}^1,$ but $Q_{F, r_0}^y$ does not, i.e. \[Z_{f, r_0}^y \cap \TT_{a}^1 \neq \varnothing, \quad \mbox{and} \quad Z_{F, r_0}^y \cap \TT_{a}^1 = \varnothing.\]
\end{enumerate}
\ref{item:case1}: Since \[Q_{F, r_0}(x) \cdot Q_{f, r_0}(x) \neq 0 \quad \quad \mbox{for all} \ x \in \TT_{a}^1,\] the discussion preceding the proof of Proposition \ref{allin1} implies that the algebraic functions $y_{j, r_0}(x)$ may have only an algebraic branch point at $x = a \in \TT_{a}^1.$ From Proposition \ref{allin1} we know that $\nu_{a, b, r_0}^2$ is constant in $\TT_{a}^1.$ Therefore, we can in fact assume that $x = a$ is not a branch point of $y_{j, r_0}(x)$ for all $j.$ Indeed, if $x = a$ is branch point, then there exists an $x_0 \in \TT_{a}^1$ close enough to $a$ such that $x_0 \notin S_r,$ where $S_r$ is given in \eqref{criticalpoint}. We replace $a$ with $x_0$ in the statement, and proceed. Here we provide a proof of \ref{item:case1} when $\varrho_{a, b, r_0}^2(a) = d,$ since the case when $\varrho_{a, b, r_0}^2(a) = 0$ is similar. Recall that the condition $\varrho_{a, b, r_0}^2(a) = d$ (resp. $\varrho_{a, b, r_0}^2(a) = 0$) is equivalent to the condition that all the roots of $Q_{r_0}(a, y)$ lie inside (resp. outside) the circle $\TT_{b}^1.$

The polynomial $Q_r(x, y)$ has additional structure: $Q_r(x, y) = r - Q(x, y)$ where $Q$ does not contain any constant term, and $r$ is the constant coefficient in $Q_r.$ Therefore, after multiplying $Q_r$ by $y^{v_2},$ we find from \eqref{expansn1} that one, and only one, among the set of the coefficients \[\mathrm{Coeff}_{Q_r, x}:= \{Q_{F, r}(x), Q_{f, r}(x), a_{1, r}(x), \dots, a_{d-1, r}(x)\} \subset \overline{\C(x)}\] contains $r$ as its constant term, namely the coefficient of $y^{v_2}$ in $y^{v_2}Q_r(x, y).$ Let $b_{v_2, r}(x)$ denotes the said coefficient. Then $b_{v_2, r}(x) \in \mathrm{Coeff}_{Q_r, x},$ and $b_{v_2, r}(x) - b_{v_2, r_0}(x) = r - r_0.$ Since all the coefficients, except $b_{v_2, r},$ do not depend on $r$ by construction, the above discussion further implies that \begin{align}
&\left\{\left|Q_{F, r}(x) - Q_{F, r_0}(x)\right|, \left|Q_{f, r}(x) - Q_{f, r_0}(x)\right|\right\} \cup \left\{\left|a_{j, r}(x) - a_{j, r_0}(x)\right|: 1 \leq  j \leq d-1\right\} \label{F1} \\ =& \left\{0, \left|b_{v_2, r}(x) - b_{v_2, r_0}(x)\right|\right\}  = \left\{0, \left|r -r _0\right|\right\}. \nnum
\end{align}
In other words, if, for example, $Q_{F, r}(x) = b_{v_2, r}(x),$ then \[Q_{F, r}(x) - Q_{F, r_0}(x) = r - r_0, \quad Q_{f, r}(x) = Q_{f, r_0}(x), \quad \mbox{and, for all} \ j, \ a_{j, r}(x) = a_{j, r_0}(x).\]

Next we investigate the relation between $|Q_r(a, y) - Q_{r_0}(a, y)|$ and $|Q_{r_0}(a, y)|$ when $y$ takes values in certain sufficiently small circles. 

Let \[\epsilon_{ij} = \frac{1}{b}\left|y_{i, r_0}(a) - y_{j, r_0}(a)\right| \quad \quad \mbox{and} \quad \quad \epsilon_{k} = \frac{1}{b}\min_{t \in \TT_{b}^1}\left|y_{k, r_0}(a) - t\right|.\] Since all the roots of $Q_{r_0}(a, y)$ are distinct and lie inside the circle $\TT_{b}^1$, the quantities $\epsilon_{ij}, \epsilon_{k}$ are non-zero for any $i, j, k \in \{1, \dots, d\}$ such that $i \neq j.$

We denote \[\Upsilon = \min_{\substack{1 \leq i < j \leq d \\ 1 \leq k \leq d}}\left\{\epsilon_{ij}, \epsilon_{k}\right\}.\] Note that $\Upsilon > 0.$ Let $\epsilon \in \left(0, \Upsilon \right)\cap \left(0, 1\right).$ We define the closed discs \[D_j = \{z: |z - y_{j, r_0}(a)| \leq \epsilon\}, \quad \mbox{for} \ j =1, \dots, d.\] Let $C_j = \partial D_j$ be the boundary of $D_j.$ The choice of $\epsilon$ then confirms that the discs $D_j$ are disjoint and $Q_{r_0}(a, y)$ does not vanish on $C_j.$ This implies $\psi_{j, \epsilon, r_0} := \min_{y \in C_j}|Q_{r_0}(a, y)|$ is positive for each $j.$

Let $\delta_{j, r_0, \epsilon} := \frac{\psi_{j, \epsilon, r_0}}{d+1}.$ Then, for $y \in C_j,$ and $r \in V_{a, b, r_0}$ such that \[|r - r_0| <  \delta_{j, r_0, \epsilon},\] we have \begin{align*}
&\left|Q_r(a, y) - Q_{r_0}(a, y)\right|  \\ =& \left|\left(Q_{F, r}(a) - Q_{F, r_0}(a)\right)(y)^d + \left(Q_{f, r}(a) - Q_{f, r_0}(a)\right) + \sum_{j=1}^{d-1}\left(a_{j, r}(a) - a_{j, r_0}(a)\right)(y)^j \right|  \\ \leq & |r-r_0|\left(\sum_{j = 0}^d |\epsilon|^j\right) \leq  (d+1)|r - r_0|  < \psi_{j, \epsilon, r_0} \leq \left|Q_{r_0}(a, y)\right|,
\end{align*}
where the first inequality follows from \eqref{F1}. 

 This implies that, for $j = 1, \dots, d,$ \[\left|Q_r(a, y) - Q_{r_0}(a, y)\right| < \left|Q_{r_0}(a, y)\right|\] on $C_j.$ Therefore, it follows from Rouch\'e's Theorem that $Q_r(a, y)$ and $Q_{r_0}(a, y)$ have the same number of root(s) in the interior of $D_j$ when $|r-r_0| <  \delta_{j, r_0, \epsilon}.$ Moreover, for \[\delta(\epsilon, r_0) = \min_{1 \leq j \leq d} \delta_{j, r_0, \epsilon} > 0,\] the choice of $\epsilon$ implies that, when $|r - r_0| < \delta(\epsilon, r_0),$ all the roots of $Q_{r}(a, y)$ lie entirely inside the circle $\TT_{b}^1.$ 

When $|r - r_0| < \delta(\epsilon, r_0),$ another application of Proposition \ref{allin1} yields that all the roots of $Q_r(x, y)$ lie inside $\TT_{b}^1$ for every $x \in \TT_{a}^1.$ Following the discussion in Section \ref{ATaMM} regarding the Mahler measure over arbitrary tori, we conclude that, for $r \in \{z: |z-r_0| < \delta_{\epsilon, r_0}\} \subset V_{a, b, r_0},$ \begin{align}
\m_{a, b}(Q_r(x, y)) &= \m_{a, b} \left((y)^{-v_{2}}Q_{F, r}(x)\prod_{j=1}^d (y - y_{j, r} (x))\right) \nnum \\ &= \m_{a}(Q_{F, r}(x)) - v_2 \log b + \m_{a, b}\left(\prod_{j=1}^d (y - y_{j, r} (x))\right) \nnum \\ &= \m_a(Q_{F, r}(x)) -v_2 \log b + d\log b. \label{eval}
\end{align}

Similarly, when all roots of $Q_{r_0}(a, y)$ lie outside the circle $\TT_{b}^1,$ we have for $r \in \{z: |z-r_0| < \delta_{\epsilon, r_0}\} \subset V_{a, b, r_0},$ \begin{align}
\m_{a, b}(Q_r(x, y)) =&  \m_{a}(Q_{F, r}(x)) - v_2 \log b + \m_{a}(Q_{f, r}(x)) - \m_{a}(Q_{F, r}(x)) \nnum \\ =& \m_{a}(Q_{f, r}(x)) - v_2 \log b. \label{eval*}
\end{align}

Recall that, $\nu_{a, b, r}^2$ denotes the difference between the number of zeros (counting multiplicity) of $Q_r(a, y)$ inside $\TT_{b}^1$ and the order of pole of $Q_{r}(a, y)$ at $y =0.$ Then the above discussion implies that, for $r \in \{z: |z-r_0| < \delta_{\epsilon, r_0}\} \subset V_{a, b},$ \[\nu_{a, b, r}^2 = \nu_{a, b, r_0}^2 = \varrho_{a, b, r_0}^2(x) - v_2 = d - v_2 \ \mbox{or} \ - v_2.\]

Since $\m_{a}(Q_{F, r}(x))$ and $\m_{a}(Q_{f, r}(x))$ are harmonic, and $\m_{a, b}(Q_r(x, y))$ is harmonic for all $r \in V_{a, b, r_0} \setminus \mathcal{S}_{a, b, r_0}$ (where $\mathcal{S}_{a, b, r_0}$ is a finite set containing all the $r \in V_{a, b, r_0}$ such that $Q_r(x, y)$ is singular), the equalities in \eqref{eval} and \eqref{eval*} can be extended to a larger set $ V_{a, b, r_0} \setminus \mathcal{S}_{a, b, r_0}.$ using the harmonicity of Mahler measure. In other words, for $r, r_0 \in V_{a, b, r_0} \setminus \mathcal{S}_{a, b, r_0},$ \begin{equation}\label{evalconst}
\m_{a, b}(Q_r) - \nu_{a, b, r_0}^2 \log b = \left\{\begin{array}{ll}
\m_{a}(Q_{F, r}(x)) & \mbox{all roots of} \ Q_{r_0}(a, y) \ \mbox{lie inside} \ \TT_{b}^1, \\ \m_{a}(Q_{f, r}(x)) & \mbox{all roots of} \ Q_{r_0}(a, y) \ \mbox{lie outside} \ \TT_{b}^1.
\end{array} \right.
\end{equation}
By continuity, \eqref{evalconst} holds for all $r \in V_{a, b, r_0},$ and this concludes the proof of the \ref{item:case1}.

\smallskip

Recall that  $Z_{F, r}^y =\{z \in \C: Q_{F, r}^y(z) = 0\},$ and $Z_{f, r}^y = \{z \in \C: Q_{f, r}^y(z) = 0\}.$ \newline
\ref{item:case2}: If \[Z_{F, r_0}^y \cap \TT_{a}^1 \neq \varnothing, \quad \mbox{and} \quad Z_{f, r_0}^y \cap \TT_{a}^1 = \varnothing,\] then there exists $x' \in Z_{F, r_0}^y \cap \TT_{a}^1,$ and a $l \in \{1, \dots, d_y\}$ such that $y_{l, r_0}(x)$ has a pole at $x'.$ Then Proposition \ref{allin1} and the conditions in the statement of Theorem \ref{constant} imply that all the roots of $Q_{r_0}(a, y)$ lie outside the circle $\TT_{b}^1,$ and we can choose an $x_0 \in \TT_{a}^1$ in a sufficiently small neighbourhood of $x',$ such that $x_0$ is not a pole of $y_{j, r_0}$ for all $j.$ Such choice is possible since the set of critical points $S_{r_0}
$ of the global analytic function $\textbf{y}_{r_0}$ is a finite set. Then a similar argument as in \textbf{Case $1$} implies that, for all $r \in V_{a, b, r_0},$ \[\m_{a, b}(Q_r) - \nu_{a, b, r}^2 \log b = \m_{a}(Q_{f, r}^y(x)).\]
\ref{item:case3}: If \[Z_{f, r_0}^y \cap \TT_{a}^1 \neq \varnothing, \quad \mbox{and} \quad Z_{F, r_0}^y \cap \TT_{a}^1 = \varnothing,\] then there exists $x'' \in Z_{f, r_0}^y \cap \TT_{a}^1,$ and a $p \in \{1, \dots, d_y\}$ such that $y_{p, r_0}(x)$ has a zero at $x''.$ Again, Proposition \ref{allin1} and the conditions in the statement of Theorem \ref{constant} imply that all the roots of $Q_{r_0}(a, y)$ lie inside the circle $\TT_{b}^1,$ and we can choose an $x_1 \in \TT_{a}^1,$ such that $x_1 \notin S_{r_0} \cup Z_{f, r_0}^y,$ and $Q_{r_0}(x_1, y)$ has all the roots inside $\TT_{b}^1.$ With these conditions, we have, for all $r \in V_{a, b, r_0},$ \[\m_{a, b}(Q_r) - \nu_{a, b, r}^2 \log b = \m_{a}(Q_{F, r}^y(x)).\]

This concludes the proof of the statement \ref{item:firstcond}. Statement \ref{item:secndcond} follows from an analogous argument.
\end{proof} 

Next, we sketch a proof of Theorem \ref{nvarconstant}. Recall that multiplying $P_k$ with a suitable power of $x_{j},$ we can factorise $P_k$ in linear factors with coefficients in $\overline{\C(x_1, \dots, \widehat{x_j}, \dots, x_{n})}$ as \[P_{k}(x_1, \dots, x_n) = x_j^{-v_j}P_{F, k}^j(x_1, \dots, \widehat{x_j}, \dots, x_{n}) \prod_{l = 1}^{d_n} \left(x_j - X_{l, k, j}\left(x_1, \dots, \widehat{x_j}, \dots, x_{n}\right)\right),\] where $d_j$ is the degree of $P_k$ as a polynomial in $x_j,$ $X_{l, k, j}$ are algebraic functions of $(x_1, \dots, \widehat{x_j}, \dots, x_{n})$ for $l = 1, \dots, d_n,$ $P_{F, k}^j$ is the leading coefficient with respect to the variable $x_j,$ and $v_j$ is the largest power of $x_j^{-1}$ in $P_k.$ Let $P_{f, k}^j(x_1, \dots, \widehat{x_j}, \dots, x_{n})$ denote the "constant" coefficient with respect to the variable $x_j.$ Then \[P_{F, k}^j(x_1, \dots, \widehat{x_j}, \dots, x_{n}) \prod_{j = 1}^{d_n} X_{l, k, j}(x_1, \dots, \widehat{x_j}, \dots, x_{n}) = P_{f, k}^j(x_1, \dots, \widehat{x_j}, \dots, x_{n}).\] 

For $\left(u_1, \dots, \widehat{u_j}, \dots, u_n\right) \in \TT_{a_1, \dots, \widehat{a_j}, \dots, a_n}^{n-1},$ let $\varrho_{\mathfrak{a}, k}^j\left(u_1, \dots, \widehat{u_j}, \dots, u_n\right)$ be the number of zeroes (counting multiplicities) of $P_{k}(u_1, \dots, u_{j-1}, x_j, u_{j+1}, \dots, u_n)$ inside the circle $\TT_{a_j}^1.$ Then, from the above discussion, we have $P_{\mathfrak{a}, k}^j = v_j,$ and \[\varrho_{\mathfrak{a}, k}^j\left(a_1, \dots, \widehat{a_j}, \dots, a_n\right) = Z_{\mathfrak{a}, k}^j = \nu_{\mathfrak{a}, k}^j + P_{\mathfrak{a}, k}^j = \nu_{\mathfrak{a}, k}^j + v_j.\] An analogous argument as in the proof of Proposition \ref{allin1} yields the following proposition. 

\begin{prop}\label{allin1n}
Let $k \notin \K_{\mathfrak{a}}.$ Then $\varrho_{\mathfrak{a}, k}^j (x_1, \dots, \widehat{x_j}, \dots, x_{n})$ is constant for all $(x_1, \dots, \widehat{x_j}, \dots, x_{n}) \in \TT_{a_1, \dots, \widehat{a_j}, \dots, a_n}^{n-1}.$
\end{prop}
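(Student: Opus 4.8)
The plan is to mirror the proof of Proposition~\ref{allin1} verbatim, replacing the single ``free'' variable $x$ by the multivariable tuple $\mathbf{x}_{\widehat{j}} := (x_1, \dots, \widehat{x_j}, \dots, x_n)$ ranging over the compact connected torus $\TT_{a_1, \dots, \widehat{a_j}, \dots, a_n}^{n-1}$. First I would fix an arbitrary $k \notin \K_{\mathfrak{a}}$ and observe that $\varrho_{\mathfrak{a}, k}^j$ defines a map from this torus to $\Z$ (with the discrete topology). As in the two-variable case, the goal is to prove this map is continuous; since the torus is connected and the only connected subsets of $\Z$ are singletons, continuity forces $\varrho_{\mathfrak{a}, k}^j$ to be constant, which is exactly the assertion.

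For continuity I would distinguish, exactly as before, between base points $\mathbf{x}_{\widehat{j}}^{\,0}$ that are critical and non-critical for the global analytic function $\mathbf{X}_{k,j}$ whose branches are the roots $X_{l,k,j}$ of $P_k$ in $x_j$. Here the set of critical points is the zero locus of the product of $P_{F,k}^j$ and the $y$-resultant $\mathrm{Res}_{x_j}(P_k, \partial_{x_j} P_k)$, now viewed as functions on $(\C^*)^{n-1}$; since this is a proper subvariety, its intersection with the $(n-1)$-torus is a nowhere-dense closed set, and near any point of the torus we can find a polydisc neighbourhood meeting at most this branch locus. Away from the critical locus each $X_{l,k,j}(\mathbf{x}_{\widehat{j}})$ is holomorphic, hence $|X_{l,k,j}(\mathbf{x}_{\widehat{j}})|$ is continuous; because $P_k$ does not vanish on $\TT_{\mathfrak{a}}^n$ (as $k\notin\K_{\mathfrak{a}}$), none of these moduli can equal $a_j$ on the torus, so whether $|X_{l,k,j}| < a_j$ or $>a_j$ is locally constant, giving local constancy of $\varrho_{\mathfrak{a},k}^j$. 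At a point of the critical locus one invokes the convergent Puiseux series expansions (properties~\ref{item:critical1} and~\ref{item:critical2} in Section~3, suitably interpreted with the local parameter built from the appropriate coordinate), which again show $|X_{l,k,j}|$ is continuous in a small neighbourhood, reducing to the previous case; hence $\varrho_{\mathfrak{a},k}^j$ is continuous there too. Finally, $\varrho_{\mathfrak{a},k}^j(a_1,\dots,\widehat{a_j},\dots,a_n) = Z_{\mathfrak{a},k}^j = \nu_{\mathfrak{a},k}^j + v_j$ by the computation preceding the statement, identifying the constant value.

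The main obstacle, and the only place the argument is genuinely more than a relabelling, is the local structure of the branches $X_{l,k,j}$ near the critical locus when $n-1 \geq 2$: a multivariable discriminant locus can be singular, and the clean single-cycle Puiseux description of property~\ref{item:critical1} is a one-variable phenomenon. I would handle this by not needing the full local monodromy picture: it suffices that each $|X_{l,k,j}|$ extends continuously across the critical torus, and this follows because the roots of a monic polynomial depend continuously (as an unordered multiset) on the coefficients, which here are continuous functions of $\mathbf{x}_{\widehat{j}}$ on all of the torus. Thus the map $\mathbf{x}_{\widehat{j}} \mapsto \{\,|X_{1,k,j}|, \dots, |X_{d_j,k,j}|\,\}$ (as a multiset in $\R_{>0}$, after clearing the pole $x_j^{v_j}$ by multiplying through by $P_{F,k}^j$ as in~\ref{item:critical2}) is continuous, none of its entries hits $a_j$ by the non-vanishing hypothesis, and therefore the count below $a_j$ is continuous. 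This sidesteps the need for Puiseux cycles in several variables while keeping the proof parallel to that of Proposition~\ref{allin1}.
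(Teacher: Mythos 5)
Your proof is correct. The paper itself omits the argument, asserting only that the statement is ``an immediate extension of Proposition~\ref{allin1}, which follows from an induction argument on $n \geq 2$,'' so there is nothing verbatim to compare against; but your direct generalization is sound and, in one respect, an improvement on the two-variable proof it mirrors. The skeleton is identical: $\varrho_{\mathfrak{a},k}^j$ is an integer-valued function on the connected torus $\TT_{a_1,\dots,\widehat{a_j},\dots,a_n}^{n-1}$, local constancy follows because $k \notin \K_{\mathfrak{a}}$ forces $|X_{l,k,j}| \neq a_j$ everywhere on the torus, and connectedness finishes the job. Your key observation --- that the single-cycle Puiseux description in property~\ref{item:critical1} is a genuinely one-variable phenomenon and that the multivariable discriminant locus need not be as tame --- is a real issue with a naive relabelling, and your workaround (continuity of the unordered multiset of roots in the coefficients, equivalently local constancy of the argument-principle integral $\frac{1}{2\pi i}\oint_{|x_j|=a_j} \partial_{x_j}P_k/P_k\,dx_j$ in the remaining variables) is the right fix; it even simplifies the $n=2$ proof by making the branch-point case distinction unnecessary. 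The only point stated a little loosely is the treatment of points where the leading coefficient $P_{F,k}^j$ vanishes on the torus: there the polynomial in $x_j$ is not monic and its degree drops, so ``continuity of roots of a monic polynomial'' does not apply literally. The conclusion still holds --- the roots that escape do so to infinity and hence leave the fixed disc $|x_j| \le a_j$, or one simply notes that the argument-principle integral is continuous regardless --- but you should say this explicitly rather than gesture at property~\ref{item:critical2}. The paper's suggested induction would presumably proceed by freezing all but one of the free coordinates and applying Proposition~\ref{allin1} along each coordinate circle; your global topological argument buys the same conclusion in one step.
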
 

We omit the proof of the proposition here since it is an immediate extension of Proposition \ref{allin1}, which follows from an induction argument on $n \geq 2.$ 

Then Proposition \ref{allin1n}, along with a similar argument as in the proof of Theorem \ref{constant}, establishes Theorem \ref{nvarconstant}.

\section{Generalized Mahler measure of a family of polynomials}
In this section, we consider the family of polynomials \[\left\{Q_r(x, y) = x + \frac{1}{x} + y + \frac{1}{y} + r : r \in \C \right\}.\] Boyd \cite{Bo2}, Deninger \cite{Den1}, Rodriguez-Villegas \cite{RV1}, Lal\'in \cite{LR07}, Rogers and Zudilin \cite{RZ1} et al have successfully evaluated the (standard) Mahler measure of $Q_r$ for different values of $r$ in terms of special values of Dirichlet $L$-functions, $L$-functions of elliptic curves, special values of the Bloch-Wigner dilogarithm, etc. Our aim here is to apply Theorems \ref{mainthm} and \ref{constant} to evaluate the generalized Mahler measure of $Q_r.$ 

Before proceeding with this evaluation we recall some notation associated to the considered family of polynomials for the reader's convenience. \begin{enumerate}
\item[1.] The map in \eqref{Rh} is defined in this case as \[q: \TT_{a, b}^2 \mapsto \C, \quad \quad (x, y) \mapsto  x + \frac{1}{x} + y + \frac{1}{y}.\]

\item[2.] The image of $q$ is denoted by $\Rh_{a, b}.$ The elements of $\Rh_{a, b}$ are of the form \begin{equation*}
 r = \left(a + a^{-1}\right)\cos \alpha + \left(b + b^{-1}\right)\cos \beta + i\left[\left(a - a^{-1}\right)\sin \alpha + \left(b - b^{-1}\right)\sin \beta\right],
\end{equation*} where $\alpha, \beta \in [-\pi, \pi).$

\item[3.] Since $\Rh_{a, b}$ is compact, $R_{a, b} = \max_{r \in \Rh_{a, b}}|r|$ exists. 

\item[4.] $U_{a, b}$ denotes the unbounded open connected component of $\C \setminus \Rh_{a, b}.$ It contains the region $\{|r| > R_{a, b}\};$ since $R_{1, 1} = 4,$ we have $U_{a, b} \subseteq U_{1, 1}.$
\end{enumerate}

Now we are ready to apply our theorems to evaluate the generalized Mahler measure of $Q_r.$

\subsection{Generalized Mahler measure on the unbounded component of $\C \setminus \Rh_{a, b}$}

In \cite{RV1} Rodriguez-Villegas expressed the (standard) Mahler measure of $Q_r$ in terms of Eisenstein-Kronecker series for any $r \in \C.$ Combining his proof and  Theorem \ref{mainthm}, we will show that, for fixed $a, b >0,$ there exists a large open subset of $\C,$ namely $O_{a, b} = U_{a, b} \cap U_{1, 1},$ such that if $r \in O_{a, b},$ then the Mahler measure remains unchanged irrespective of the dependence of the integration torus on $(a, b).$ We will in fact go further and show that $O_{a, b}$ is the unbounded component of $\C \setminus \Rh_{a, b},$ namely $U_{a, b}.$ Later in this section, we will give an explicit expression of the region $O_{a, b},$ as well as of the region $\Rh_{a, b}.$

 Recall that, for fixed $a, b > 0,$ $Q_r$ does not vanish on $\TT^2_{a, b}$ if and only if $r \notin \Rh_{a, b}.$ In order to show that, for a fixed $a, b >0,$ \[\m_{a, b}(Q_r) = \m (Q_r), \quad \quad \quad \mbox{for all} \ r \in O_{a, b},\] it suffices to evaluate $\nu_{a, b, r}^j$ for $j =1, 2.$ Since these quantities are constant in the region $O_{a, b},$ we can choose a suitable $r$ and apply Theorem \ref{mainthm} to evaluate them. Let \[R = R_{a, b} + R_{1, 1} = a + \frac{1}{a} + b + \frac{1}{b} + 4.\] Note that $R \in O_{a, b}$ and $R \notin (-\infty, 0].$ 

Recall that $\nu_{a, b, r}^1$ denotes the difference between the number of zeros (counting multiplicity), namely $Z_{a, b, r}^1$ and the number of poles (counting multiplicity), namely $P_{a, b, r}^1,$ of $Q_r(x, b)$ inside the circle $\TT_{a}^1,$ i.e. \[\nu_{a, b, r}^1 = Z_{a, b, r}^1 - P_{a, b, r}^1,\] and that $\nu_{a, b, r}^2$ is also defined in a similar way. 

Since $Q_R(x, b)$ is holomorphic everywhere except for a simple pole at $x = 0,$ we have $P_{a, b, R}^1 = 1.$ Therefore, $xQ_R(x, b)$ has no pole in $\C.$ Now $xQ_R(x, b)$ can be factored in $\C[x]$ as \[xQ_R(x, b) = \left(x - x_+\right)\left(x - x_-\right),\] where \[x_{\pm} = \frac{-\left(R + b + \frac{1}{b}\right)\pm \sqrt{\left(R + b + \frac{1}{b}\right)^2 - 4}}{2}.\] Notice that $x_+ \cdot x_- = 1,$ and since $R + b + \frac{1}{b} > a + \frac{1}{a},$ we also have \begin{align*}
|x_-| &= \left|\frac{R + b + \frac{1}{b}+ \sqrt{\left(R + b + \frac{1}{b}\right)^2 - 4}}{2}\right| = \frac{R + b + \frac{1}{b}+ \sqrt{\left(R + b + \frac{1}{b}\right)^2 - 4}}{2} \\ &= \frac{R + b + \frac{1}{b}+ \sqrt{\left( a + \frac{1}{a} + b + \frac{1}{b} + b + \frac{1}{b} + 6\right)\left( a + \frac{1}{a} + b + \frac{1}{b}  + b + \frac{1}{b} + 2\right)}}{2} \\ &\geq a + \frac{1}{a}. 
\end{align*}    
 Since $a + \frac{1}{a} > \max\left\{a, \frac{1}{a}\right\},$ we have $|x_+| \leq \frac{1}{a + \frac{1}{a}} < a,$ and therefore, $Z_{a, b, R}^1 = 1.$ By the definition of $\nu_{a, b, R}^1,$ it follows that $\nu_{a, b, R}^1 = 0.$ A similar argument shows that $\nu_{a, b, R}^2 = 0.$ Combining Theorem \ref{mainthm} and the values obtained above, we derive that, for $r \in U_{a, b} \subset U_{1, 1},$ \[\m_{a, b}(Q_r) = \m(Q_r),\] and the required $O_{a, b}$ is in fact the region $U_{a, b}.$

Until now we have been fixing $a, b > 0$ in our discussion. Next, we want to show that our theorem can even be applied to a fixed suitable $r$ in order to obtain certain values of $(a, b)$ such that the equality $\m_{a, b}(Q_r) = \m(Q_r)$ still holds. 

For some particular values of $r \in \R \cup i\R,$ the standard Mahler measure of $Q_r$ has been proven to be the same as (up to a rational multiple) a special value of $L$-function of the elliptic curve corresponding to $Q_r$ due to Boyd \cite{Bo2}, Rodriguez-Villegas \cite{RV1}, Deninger \cite{Den1}, Rogers and Zudilin \cite{RZ1}, Lal\'in and Rogers \cite{LR07} et al. Therefore, an interesting direction would be to search for values of $(a, b)$ such that changing the integration torus from $\TT^2 $ ($=\TT_{1, 1}^2$) to $\TT_{a, b}^2$ keeps the Mahler measure fixed. In order to do so, first notice that, for all $r > R_{a, b},$ Theorem \ref{mainthm} implies that \[\m_{a, b}(Q_r) = \m(Q_r).\] Since $a$ and $b$ are fixed arbitrarily, we can fix $r = r_0 > 4,$ and conclude that, for all $2$-tuples $(a, b)$ satisfying \[a + \frac{1}{a} + b + \frac{1}{b} < r_0,\] we have $\m_{a, b}(Q_{r_0}) = \m(Q_{r_0}).$ Since the change of variables $r \mapsto -r$ covers the case when $r < -4,$ it is sufficient to consider the $r > 4$ case here.

For $r \in i\R,$ it suffices to investigate the imaginary part of $r \in \Rh_{a, b}.$ Indeed, once we calculate the $\max_{r \in \Rh_{a, b}} \im(r),$ we can conclude that all $r' \in \C,$ such that $\im(r') >  \max_{r \in \Rh_{a, b}} \im(r),$ belong to the unbounded component of $\C \setminus \Rh_{a, b},$ namely $U_{a, b}.$ The following discussion results in gathering the required $2$-tuples $(a, b)$ such that \[\m_{a, b}(Q_{r_0}) = \m(Q_{r_0})\] for a fixed $r' = r_0.$

Recall that, any element in $\Rh_{a, b}$ can be written as \[r = \left(a + a^{-1}\right)\cos \alpha + \left(b + b^{-1}\right)\cos \beta + i\left[\left(a - a^{-1}\right)\sin \alpha + \left(b - b^{-1}\right)\sin \beta\right],\] where $\alpha, \beta \in [-\pi, \pi).$ Notice that, \[\left|\im(r)\right| = \left|\left(a - a^{-1}\right)\sin \alpha + \left(b - b^{-1}\right)\sin \beta\right| \leq \left|a - a^{-1}\right| + \left|b - b^{-1}\right|,\] and, for $\alpha = \beta \in \{-\frac{\pi}{2}, \frac{\pi}{2}\},$ we have \[r_{\max, i\R} =  i\left[\left|a - a^{-1}\right| + \left|b - b^{-1}\right|\right].\] Therefore, when $a$ and $b$ are fixed, we have $\m_{a, b}(Q_{r}) = \m(Q_{r})$  for all $r \in \left\{z \in i\R: |z| >  \left|a - a^{-1}\right| + \left|b - b^{-1}\right|\right\}.$ Then a similar argument as in the real case shows that, for a fixed $r_0 \in i\R_{> 0},$ the Mahler measure of $Q_{r_0}$ over the integration torus $\TT_{a, b}^2$ is same as the standard Mahler measure, i.e. \[\m_{a, b}(Q_{r_0}) = \m(Q_{r_0}),\] for all the $2$-tuples $(a, b)$ satisfying \[\left|a - a^{-1}\right| + \left|b - b^{-1}\right| < |r_0|.\] 

Here we mention two such examples for $r = 8$ and $r = 2i.$
\begin{ex}[\textbf{r = 8}] 
 We provide two cases: $(I)$ when $b = a,$ and $(II)$ when $b = \sqrt{a}.$ Notice that, case $(I)$ keeps the symmetry of the polynomial \[Q_8(x, y) = x + \frac{1}{x} + y + \frac{1}{y} + 8\] in the variables $x$ and $y.$ In other words, under the change of variables $x \mapsto y$ and $y \mapsto x,$ the polynomial $Q_8$ remains unchanged, and so does the integration torus $\TT_{a, a}^2$. On the other hand, case $(II)$ breaks the symmetry as then the above changes of variables change the integration torus from $\TT_{a, \sqrt{a}}^2$ to $\TT_{\sqrt{a}, a}^2.$ In spite of the differences between these two cases, there are certain values of $a$ such that \[\m_{a, \sqrt{a}}(Q_8) = \m_{a, a}(Q_8) = \m(Q_8) = 4L'(E_{24}, 0),\] where $E_{24}$ is an elliptic curve of conductor $24$ associated to $Q_8.$ Here the last equality follows from combining the results due to Rogers and Zudilin \cite{RZ1}, and Lal\'in and Rogers \cite{LR07}, where they showed  \[\m(Q_8(x, y)) =\m (Q_2(x, y)) = L'(E_{24}, 0).\]

From the above discussion, we find that, when $(I)$ $a = b,$ the equality \[\m_{a, a}(Q_8) = \m(Q_8)\] holds for all $a$ satisfying \[a + \frac{1}{a} < 4 \Longleftrightarrow 2 - \sqrt{3} < a < 2 + \sqrt{3}.\] Similarly, when $(II)$ $b = \sqrt{a},$ we find that, for \begin{align*}
 & a + \frac{1}{a} + \sqrt{a} + \frac{1}{\sqrt{a}} < 8 \\ \Longleftrightarrow& \frac{17 - \sqrt{41} - \sqrt{2\left(157 - 17\sqrt{41}\right)}}{4} < a < \frac{17 - \sqrt{41} + \sqrt{2\left(157 - 17\sqrt{41}\right)}}{4},
\end{align*} the equality \[\m_{a, \sqrt{a}}(Q_8) = \m(Q_8)\] holds. Since, \[\frac{17 - \sqrt{41} + \sqrt{2\left(157 - 17\sqrt{41}\right)}}{4} > 2 + \sqrt{3}\] and \[\frac{17 - \sqrt{41} - \sqrt{2\left(157 - 17\sqrt{41}\right)}}{4} = \left[\frac{17 - \sqrt{41} + \sqrt{2\left(157 - 17\sqrt{41}\right)}}{4} \right]^{-1},\] we obtain \[\m_{a, \sqrt{a}}(Q_8) = \m_{a, a}(Q_8) = \m(Q_8) = 4L'(E_{24}, 0) \quad \quad \mbox{for all} \ a \in \left(2 - \sqrt{3}, 2 +\sqrt{3}\right).\]

\end{ex}

\medskip

\begin{ex}[\textbf{r = 2$i$}] In 2011, Mellit \cite{W14} showed that \[\m(Q_{2i}) = L'(E_{40}, 0),\] where $E_{40}$ is an elliptic curve of conductor $40,$ associated to $Q_{2i}.$ 

When $b = a,$ Theorem \ref{mainthm} implies that $\m_{a, a}(Q_{2i}) = \m(Q_{2i})$ is true for \[\left|a -a^{-1}\right| < 1 \Longleftrightarrow \frac{\sqrt{5} - 1}{2} < a < \frac{\sqrt{5} + 1}{2}.\] 

Similarly, when $b = \sqrt{a},$ the equality \[\m_{a, \sqrt{a}}(Q_{2i}) = \m(Q_{2i})\] holds for all $a$ satisfying \[\left|a - \frac{1}{a}\right| + \left|\sqrt{a} - \frac{1}{\sqrt{a}}\right| < 2 \Leftrightarrow a_0 < a < a_1,\] where $a_0 \approx 0.530365\dots$ and $a_1 \approx 1.88549\dots$ satisfy $X - 1/X + \sqrt{X} - 1/\sqrt{X} - 2 =0$ and $X - 1/X + \sqrt{X} - 1/\sqrt{X} + 2 =0,$ respectively. Since $a_0 < \frac{\sqrt{5} - 1}{2}$ and $a_1 > \frac{\sqrt{5} + 1}{2},$ we obtain \[\m_{a, a}(Q_{2i}) = \m_{a, \sqrt{a}}(Q_{2i}) = \m(Q_{2i}) = L'(E_{40}, 0)\] for all $a \in \left(\frac{\sqrt{5} - 1}{2}, \frac{\sqrt{5} + 1}{2}\right).$ 

\end{ex}

\subsection{Generalized Mahler measure on bounded component(s) of $\C \setminus \Rh_{a, b}$}

In this section, our goal is to evaluate $\m_{a, b}(Q_r)$ when $ r$ belongs to the bounded connected component(s) of $\C \setminus \Rh_{a, b}.$ In particular, we show there can be at most one such component of $\C \setminus \Rh_{a, b}.$ Later, we apply Theorem \ref{constant} to calculate $\m_{a, b}(Q_r)$ for all $r$ in said component. 

\subsubsection{\textbf{Existence of at most one bounded open connected component of $\C \setminus \Rh_{a, b}$}}
Our aim here is to show that there exists at most one bounded open connected component of $\C \setminus \Rh_{a, b}$ for any $a, b > 0.$

Recall that the elements of $\Rh_{a, b}$ are of the form \begin{equation}\label{expressr}
 r = \left(a + a^{-1}\right)\cos \alpha + \left(b + b^{-1}\right)\cos \beta + i\left[\left(a - a^{-1}\right)\sin \alpha + \left(b - b^{-1}\right)\sin \beta\right],
\end{equation} where $\alpha, \beta \in [-\pi, \pi).$ We have   
 \[R_{a, b} = \max_{r \in \Rh_{a, b}} |r| = a + a^{-1} + b + b^{-1}, \ \mbox{and} \ r_{a, b} := \min_{r \in \Rh_{a, b}} |r| = \left(a + a^{-1}\right) - \left(b + b^{-1}\right).\] Then \[a = b \Longleftrightarrow r_{a, b} = 0 \Longleftrightarrow 0 \in \Rh_{a, b}.\] 

From this point onwards we assume that $a \geq b \geq 1.$ The other cases follow analogously using Lemma \ref{Lem1}. 
\begin{prop}\label{prop0Rh}
There exists at most one bounded open connected component of $\C \setminus \Rh_{a, b},$ and if it exists, then it contains $0.$
\end{prop}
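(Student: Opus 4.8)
The plan is to analyze the region $\Rh_{a,b}$ as the image of the continuous map $q(\alpha,\beta) = (a+a^{-1})\cos\alpha + (b+b^{-1})\cos\beta + i[(a-a^{-1})\sin\alpha + (b-b^{-1})\sin\beta]$ on the torus $[-\pi,\pi)^2$, writing $A = a+a^{-1}$, $B = b+b^{-1}$, $A' = a-a^{-1} \geq 0$, $B' = b-b^{-1} \geq 0$ (using $a \geq b \geq 1$). Since $\C\setminus\Rh_{a,b}$ is an open subset of $\C$ and $\Rh_{a,b}$ is compact and connected (continuous image of a connected set), the complement has exactly one unbounded component $U_{a,b}$ and some number of bounded components; I want to show there is at most one, and that it must contain $0$.

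First I would dispose of the degenerate cases. If $b = 1$ then $B' = 0$, so $q$ parametrizes (as $\beta$ varies) horizontal segments, and $\Rh_{a,b}$ fills the full ``solid'' region $\{(A\cos\alpha + B\cos\beta, A'\sin\alpha) : \alpha,\beta\}$; here one checks directly that $\C\setminus\Rh_{a,b}$ is connected, i.e. there is no bounded component, consistent with the claim. Similarly if $a = b$ then $A' = B'$ and $A = B$, and $0 = q(\pi/2,-\pi/2) \in \Rh_{a,b}$, so $0$ is not in the complement at all; again I must show no bounded component exists in that case, or rather the proposition is vacuously consistent since any bounded component would have to contain $0$ — but $0\notin\C\setminus\Rh_{a,b}$, so there is none. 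So the substantive case is $a > b > 1$, i.e. $A > B > 0$ and $A' > 0$, $B' > 0$, and here $r_{a,b} = A - B > 0$, so $0 \notin \Rh_{a,b}$ and $0$ lies in \emph{some} component of the complement.

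The core step is to show: (i) $0$ does not lie in the unbounded component $U_{a,b}$, hence lies in a bounded component, and (ii) any bounded component contains $0$, which together force uniqueness. For (i), I would exhibit a closed loop in $\Rh_{a,b}$ that winds around $0$ — e.g. the curve $\gamma(\theta) = q(\theta, \theta)$ or a suitable one-parameter sub-loop — and compute its winding number about the origin to be nonzero; since $\Rh_{a,b}$ is connected and contains a loop enclosing $0$, the origin cannot be connected to $\infty$ within the complement. For (ii), the cleanest route is a \emph{star-shapedness / radial} argument: I claim that for every direction $e^{i\phi}$, the ray $\{t e^{i\phi} : t \geq 0\}$ meets $\Rh_{a,b}$, and moreover the set of $t$ with $te^{i\phi}\in\Rh_{a,b}$ is an interval $[t_-(\phi), t_+(\phi)]$ with $t_-(\phi) \geq 0$; equivalently, $\Rh_{a,b}$ together with its ``inside'' is star-shaped about $0$, or more directly: the bounded region enclosed by the outer boundary of $\Rh_{a,b}$ minus $\Rh_{a,b}$ itself is exactly the ``hole'' around $0$. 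Concretely I would parametrize the boundary of $\Rh_{a,b}$ by the extremal $(\alpha,\beta)$ and show $\Rh_{a,b} = \{r : r_{\min}(\arg r) \leq |r| \cdot(\text{angular factor}) \dots\}$; the Appendix (referenced in the excerpt) presumably carries out exactly this explicit description, so I would invoke that: $\Rh_{a,b}$ is (essentially) an annular region whose inner boundary encircles $0$, from which it is immediate that the unique bounded complementary component is the open region bounded by that inner boundary, and it contains $0$.

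The main obstacle is step (ii) — ruling out \emph{extra} small bounded components, i.e. ``pockets'' of $\C\setminus\Rh_{a,b}$ not adjacent to the origin. This requires genuine control of the geometry of the image $q([-\pi,\pi)^2)$, not just of its boundary. I would handle it by showing $q$ restricted to a suitable annular sub-domain of the $(\alpha,\beta)$-torus is surjective onto the ``filled annulus'' $\{t e^{i\phi} : t_-(\phi) \le t \le t_+(\phi)\}$: fix $\phi$ and use the intermediate value theorem along a path in $(\alpha,\beta)$-space connecting a point mapping to $t_-(\phi)e^{i\phi}$ to one mapping to $t_+(\phi)e^{i\phi}$, keeping the argument equal to $\phi$ — the point being that $|q|$ varies continuously and attains all intermediate values along such a path. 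Combined with the winding-number computation of (i), this shows $\C\setminus\Rh_{a,b}$ has exactly two components, the unbounded one and the one containing $0$. I would relegate the explicit formulas for $t_\pm(\phi)$ and the verification that $t_-(\phi) > 0$ for all $\phi$ (when $a > b > 1$) to the Appendix computation of $\Rh_{a,b}$ already promised in the paper.
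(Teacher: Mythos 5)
Your high-level strategy coincides with the paper's: observe that $0$ lies in a bounded component $V_{a,b}$ (when $a\neq b$), and then show that every point outside $U_{a,b}\cup V_{a,b}$ actually belongs to $\Rh_{a,b}$, which forces $V_{a,b}$ to be the only bounded component. The winding-number observation in your step (i) is fine (the outer curve $q(\theta,\theta)$ is an ellipse about the origin contained in $\Rh_{a,b}$). The problem is that your step (ii) — the only genuinely hard part, as you yourself note — is not actually proved. Your radial argument asserts that for each direction $\phi$ one can connect, \emph{inside the preimage $q^{-1}(\{te^{i\phi}:t\ge 0\})$}, a point mapping to $t_-(\phi)e^{i\phi}$ to a point mapping to $t_+(\phi)e^{i\phi}$, and then apply the intermediate value theorem to $|q|$ along that path. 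But the existence of a path in $(\alpha,\beta)$-space along which $\arg q$ stays constant and which joins those two points is precisely the statement that the ray meets $\Rh_{a,b}$ in an interval; the preimage of a ray under $q$ has no reason a priori to be connected, so this is circular. (It is also unclear that the radial interval property even holds in general; the paper avoids rays through the origin and instead works with horizontal and vertical lines, restricted to one quadrant by symmetry, and runs explicit intermediate-value arguments on functions of the parameters $\alpha,\beta$ built from the ellipse description $E_{b,z}$, $z\in e_a$, handling separately the regimes $\im(P)>a-a^{-1}$ and $0\le\im(P)\le a-a^{-1}$.)

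Your fallback — invoking the Appendix's explicit description of $\Rh_{a,b}$ as the region between two explicit ellipses — does not close the gap either, because Lemmas \ref{bigEll} and \ref{smallEll} only describe the outer and inner boundaries under the additional hypotheses \eqref{condition1}, \eqref{item:lemmell1}, \eqref{item:lemmell2}; the paper even displays $(a,b)=(1.5,\,1.07)$ as a case where these fail, while Proposition \ref{prop0Rh} must hold for all $a,b>0$. Two smaller points: your dismissal of the case $a=b$ is circular (you argue no bounded component exists \emph{because} any bounded component would contain $0$, which is the conclusion to be proved; the paper instead runs the same no-pockets argument showing every point outside $U_{a,b}$ lies in $\Rh_{a,b}$), and the case $b=1$ with $a>1$ still has $r_{a,b}=a+a^{-1}-2>0$, so $0\notin\Rh_{a,b}$ and a bounded component around $0$ does exist — your claim that the complement is connected there is wrong.
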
 

Note that, if $0 \in \Rh_{a, b},$ then Proposition \ref{prop0Rh} implies that there is no bounded open connected component in $\C \setminus \Rh_{a, b}.$ Before we proceed to prove the proposition, we note some useful properties of $\Rh_{a, b}.$ 
\begin{enumerate}[(\textbf{\textsc{\Alph*}}),ref=property (\textbf{\textsc{\Alph*}})]
\item \label{item:propA} The points $r \in \Rh_{a, b}$ can be interpreted as points on the ellipses \begin{equation}\label{Ell}
E_{b, z} : \left|r - (z + 2)\right| + \left|r - (z - 2)\right| = 2\left(b + b^{-1}\right), 
\end{equation}
where $z \in \C$ lies on the ellipse \begin{equation}\label{ell}
 e_{a} : \left|z - 2\right| + \left|z + 2\right| = 2\left(a + a^{-1}\right).
\end{equation} In other words, elements of $\Rh_{a, b}$ can be identified with points on ellipses $E_{b, z}$ defined by \eqref{Ell} with centres on the ellipse $e_{a}$ in \eqref{ell}. Note that, the centre ($c$) and the foci ($c_1$ and $c_2$) of the ellipse $e_{a}$ are the points $c = 0, c_1 = -2$ and $c_2 = 2.$ The centre ($C_z$) and the foci ($C_{1, z}$ and $C_{2, z}$) of the ellipse $E_{b, z}$ are \[C_{z} = z, C_{1, z} = z-2, \ \mbox{and} \ C_{2, z} = z + 2.\] Any point $p \in \C$ lying inside (resp. outside) the ellipse $E_{b, z}$ satisfies $\left|p - C_{1, z}\right| + \left|p - C_{2, z}\right| < 2\left(b + b^{-1}\right)$ (resp. $\left|p - C_{1, z}\right| + \left|p - C_{2, z}\right| > 2\left(b + b^{-1}\right)$).  Since the length of the minor axis of $e_{a}$ is $2\left(a - a^{-1}\right),$ we derive that, for $z \in e_{a},$ \[\left|\im\left(C_{z}\right)\right| \leq \left(a - a^{-1}\right),\] and, for all $h \in \left(-a + a^{-1}, a - a^{-1}\right),$ there exists a $\tilde{z} \in e_{a}$ such that \begin{equation}\label{imaginaryzh}
\im\left(C_{\tilde{z}}\right) = h.
\end{equation} 

\smallskip

\item \label{item:propB} The region $\Rh_{a, b}$ is symmetric with respect to the imaginary and real axes, i.e. if $r \in \Rh_{a, b},$ then $-\bar{r}, \bar{r} \in \Rh_{a, b}.$ Let $Q_{\pm, \pm}$ denote the four quadrants of $\C,$ namely $Q_{+, +} = \{s \in C: \re(s) \geq 0, \im(s) \geq 0\}, Q_{+, -} = \{s \in C: \re(s) \geq 0, \im(s) \leq 0\}$ and so on. Then, the changes of variable $\{\alpha \mapsto \pi - \alpha, \beta \mapsto \pi - \beta\}$ and $\{\alpha \mapsto -\alpha, \beta \mapsto -\beta\}$ applied to \eqref{expressr} take points $r \in \Rh_{a, b} \cap Q_{+, \pm}$ to $-\bar{r} \in \Rh_{a, b} \cap Q_{-, \pm},$ and $r \in \Rh_{a, b} \cap Q_{\pm, +}$ to $\bar{r} \in \Rh_{a, b} \cap Q_{\pm, -}.$ 
\end{enumerate}

For the rest of this section, $\Rh_{a, b}^c$ denotes the region defined by $\C \setminus \Rh_{a, b}.$

\begin{proof}[\textbf{Proof of Proposition \ref{prop0Rh}}] Recall that $U_{a, b}$ denotes the connected unbounded component of $\Rh_{a, b}^c.$ First we consider the case $a > b.$ 

From the discussion at the beginning of this section, we have $0 \in \Rh_{a, b}^c,$ and moreover the open disc $\{u \in \C: |u| < r_{a, b}\}$ is contained in one of the bounded open connected components of $\Rh_{a, b}^c.$ Let $V_{a, b}$ denote this component. Note that $0 \in V_{a, b}.$

In order to prove the statement, we note that the \ref{item:propB} above implies that we can restrict ourselves to the quadrant $Q_{+, +}.$ 

Since $a > b,$ the ellipse $E_{b, z_0}$ lies completely in the interior of $Q_{+, -},$ where $z_0 = -i\left(a - a^{-1}\right).$  This implies that, for $P \in Q_{+, +},$ we have $\left|P - C_{1, z_0}\right| + \left|P - C_{2, z_0}\right| > 2\left(b + b^{-1}\right),$ where $C_{1, z_0}$ and $C_{2, z_0}$ are the foci of $E_{b, z_0}.$ In other words, the point $P$ lies completely outside the ellipse $E_{b, z_0}$ with centre at $z_0 = -i\left(a - a^{-1}\right) \in e_{a}.$ Let $z_1$ and $z_2$ denote the points $a + a^{-1}$ and $i\left(a - a^{-1}\right),$ respectively. We now consider the following cases: \begin{enumerate}[(\textsc{\textsc{\Roman*}}),ref= (\textsc{\textsc{\Roman*}})]
\item \label{item:propI} $\im(P) >  \im(z_2) = \left(a - a^{-1}\right),$

\item \label{item:propII} $0 \leq \im(P) \leq \im(z_2) = \left(a - a^{-1}\right).$ 
\end{enumerate}

Let $E^{a, b}$ and $e^{a, b}$ denote the boundaries of $U_{a, b}$ and $V_{a, b}$ respectively, i.e. \[E^{a, b} = \overline{U}_{a, b} \setminus U_{a, b}, \quad \mbox{and} \quad e^{a, b} = \overline{V}_{a, b} \setminus V_{a, b}.\] Therefore, $E^{a, b} \cup e^{a, b} \subset \Rh_{a, b}.$ If $P \notin U_{a, b} \cup V_{a, b},$ then $P$ lies in the region bounded by $E^{a, b}$ and $e^{a, b}.$ Then the line $L_{\re(P)}: t = \re(P)$ intersects $E^{a, b}$ at some point $P' \in Q_{+, +}.$ By construction, $\re(P) = \re(P'),$ and $\im(P) \leq \im(P'),$ where the equality holds iff $P = P'.$ Note that $P =P'$ is the trivial case. Therefore, we assume that $P \neq P'.$

\begin{figure}[h]
\centering
\includegraphics[width=110mm,scale=.75]{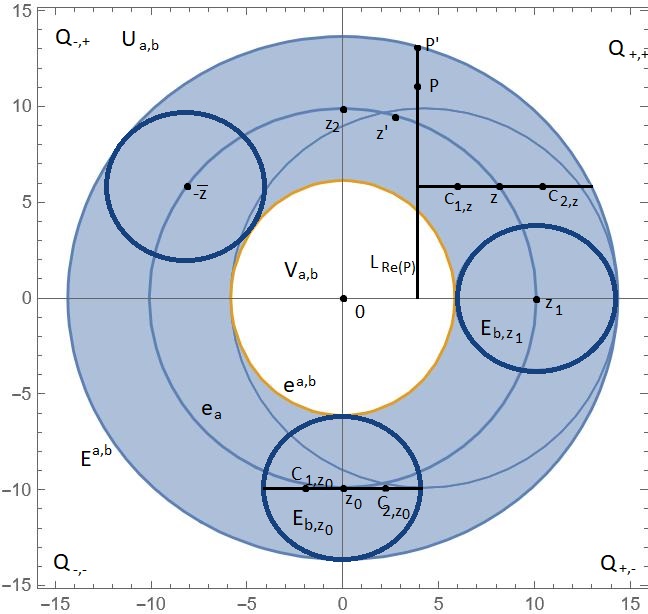}  
\caption{$\Rh_{a, b}, U_{a, b}, V_{a, b}, e_{a}, C_{j, z}, P, P', L_{\re(P)}$ when $a > b$ and \ref{item:propI} holds.}\label{region4}
\end{figure}

\textbf{Claim}: \textit{For $P \in Q_{+, +},$ if \ref{item:propI} holds, and $P \notin V_{a, b} \cup U_{a, b},$ then there exists an ellipse $E_{b, z'},$ with centre at $z' \in e_{a},$ such that \[\left|P - C_{1, z'}\right| + \left|P - C_{2, z'}\right| < 2\left(b + b^{-1}\right).\]}

Since $\im(C_{j, z}) \leq a - a^{-1}$ for all $z \in e_{a}$ and $j = 1, 2,$ case \ref{item:propI} implies that $|P' - C_{j, z}|^2 - |P - C_{j, z}|^2 > |P' - P|^2 > 0$ (see Figure \ref{region4}), and then we have \[\left|P - C_{1, z}\right| + \left|P - C_{2, z}\right| < \left|P' - C_{1, z}\right| + \left|P' - C_{2, z}\right|.\] On the other hand, $P' \in E_{a, b} \subset \Rh_{a, b},$ i.e. there exists a $z' \in e_{a}$ such that $\left|P' - C_{1, z'}\right| + \left|P' - C_{2, z'}\right| = 2\left(b + b^{-1}\right).$ This concludes the proof of the claim.  

Now note that $z \in e_{a}$ can also be written as $z = \left(a + a^{-1}\right)\cos \alpha_z + i \left(a - a^{-1}\right) \sin \alpha_z,$ for $\alpha_z \in [-\pi, \pi).$ Recall that $C_{1, z} = z -2$ and $C_{2, z} = z +2.$ Therefore, when $a$ is fixed, $\left|P - C_{j, z}\right|$ is a continuous function of $\alpha_z$ for $j = 1, 2.$ Let \[\Theta(\alpha_z) := \left|P - C_{1, z}\right| + \left|P - C_{2, z}\right| - 2\left(b + b^{-1}\right)\] define a function from $[-\pi, \pi)$ to $\R.$ From the claim, we have already concluded that, for the case \ref{item:propI}, either $ P \in U_{a, b} \cup V_{a, b},$ or there are $z_0, z' \in e_{b}$ such that \[\left|P - C_{1, z_0}\right| + \left|P - C_{2, z_0}\right| > 2\left(b + b^{-1}\right), \quad \mbox{and} \ \left|P - C_{1, z'}\right| + \left|P - C_{2, z'}\right| < 2\left(b + b^{-1}\right).\]This implies that $\Theta$ is a continuous function which takes both negative and positive values, and, using Mean Value Theorem (MVT) on $\Theta,$ we derive that there exists $z_1 \in e_{a}$ such that $\Theta(\alpha_{z_1}) = 0.$ In other words, $P \in \Rh_{a, b},$ which completes the proof of the proposition for $a > b$ when \ref{item:propI} holds.  

For case \ref{item:propII}, note that \[0 \leq \im(P), \im(C_{z}) \leq a - a^{-1}, \quad \quad \mbox{for all} \ z \in e_{a},\] where $C_{z}$ $(= z)$ is the centre of $E_{b, z}$ given in \ref{item:propA}. Then, the \textit{continuous property} of $\im\left(C_{ z}\right)$ mentioned in \eqref{imaginaryzh} implies that there exists a $z'' \in e_{a}$ such that $\im(P) = \im(C_{z''})$ (see Figure \ref{region5}). Let $L_{\im(P)} : t = \im(P)$ denote the line joining $P$ and $C_{z''},$ and let $L_{\im(P)}$ intersect $E^{a, b}$ at $P_1$ in $Q_{+, +}$ such that any $t \in L_{\im(P)}$ satisfying $\re(t) > \re(P_1)$ lies in $U_{a, b}.$ Then $P_1$ has a representation as in \eqref{expressr}, namely \[
 P_1 = \left(a + a^{-1}\right)\cos \alpha + \left(b + b^{-1}\right)\cos \beta + i\left[\left(a - a^{-1}\right)\sin \alpha + \left(b - b^{-1}\right)\sin \beta\right],\] for some $\alpha, \beta \in [0, \pi/2).$ Moreover, there exist $\gamma \in [0, \pi/2)$ such that $\im(P) = \im (P') = \im(C_{z''}) = \left(a - a^{-1}\right)\sin \gamma.$ The case $\re(P) = \re(P_1)$ is trivial as the above discussion implies that $P = P_1.$ If $\re(P) > \re(P_1),$ then from the definition of $P_1$ it follows that $P \in U_{a, b}.$  Therefore, it only remains to investigate the case when $0 \leq \re(P) < \re(P_1).$

\begin{figure}[h]
\centering
{\includegraphics[width=80mm,scale=.4]{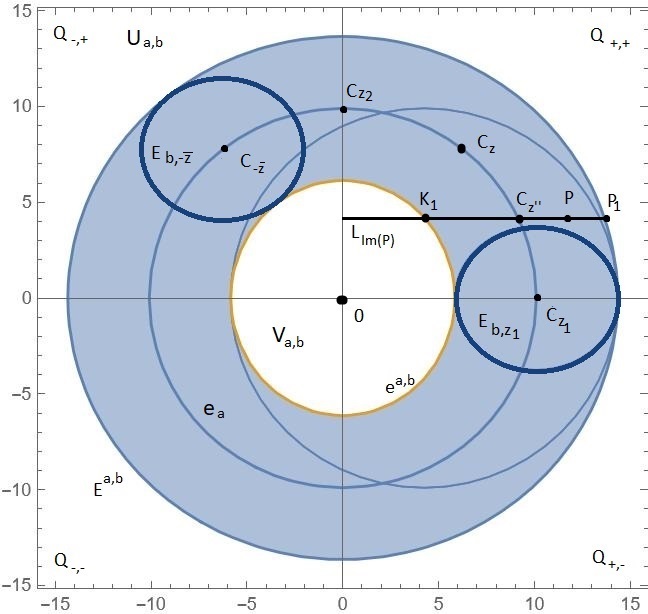}}   \ {\includegraphics[width=80mm,scale=.4]{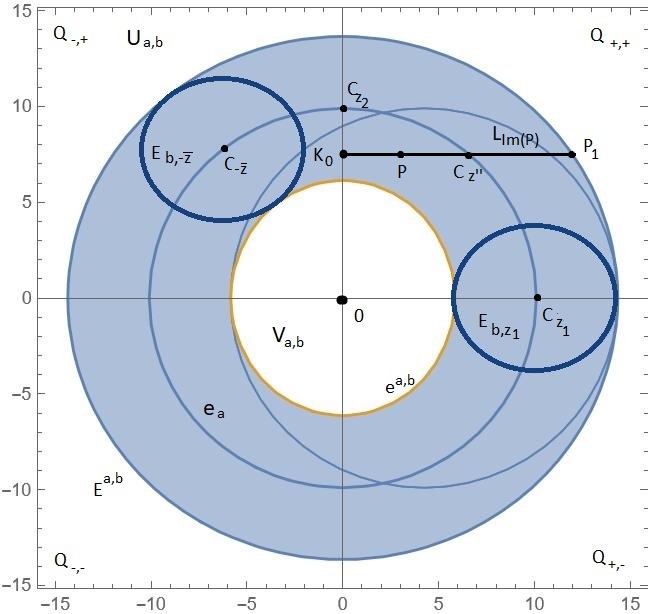}} \\ \ \ \quad (a) \quad \quad \quad \quad \quad \quad \quad \quad \quad \quad \quad \quad \quad \quad \quad \quad \quad \quad \   (b)
\caption{$\Rh_{a, b}, U_{a, b}, V_{a, b}, e_{a}, C_{z}, P, P_1, L_{\im(P)}$ when $a > b$ and \ref{item:propII} holds; left (resp. right) figure shows the case when $L_{\im(P)} \cap e^{a, b} \neq \varnothing$ (resp. $L_{\im(P)} \cap e^{a, b} = \varnothing$).}\label{region5}
\end{figure}

Define the function $f : \left[\alpha, \gamma\right] \rightarrow \left[0, \beta\right],$ which sends $\psi$ to $f(\psi)$ such that \[\left(a - a^{-1}\right)\sin \gamma = \left(a - a^{-1}\right)\sin \alpha + \left(b - b^{-1}\right)\sin \beta = \left(a - a^{-1}\right)\sin \psi + \left(b - b^{-1}\right)\sin f(\psi).\] Note that this is a continuous onto function from a connected set. Then the graph of this function, namely $\Gamma_f := \{(\psi, f(\psi)): \psi \in \left[\alpha, \gamma\right]\},$ is also a connected set. Consider another function $g: \Gamma_f \rightarrow \R,$ defined by \[(\psi, f(\psi)) \mapsto \left(a + a^{-1}\right)\cos \psi + \left(b + b^{-1}\right)\cos f(\psi).\] Note that this function is a continuous function, and there exist $\chi, \xi \in \Gamma_f$ such that $g(\chi) = \re(P_1) = \left(a + a^{-1}\right)\cos \alpha + \left(b + b^{-1}\right)\cos \beta,$ and $g(\xi) =  \re\left(C_{z''}\right) = \left(a + a^{-1}\right)\cos \gamma.$ 

Now if $\re(P) \in \left(\re\left(C_{z''}\right), \re(P_1)\right),$ we claim that there exists a $\psi_0 \in \left[\alpha, \gamma\right]$ such that \[P = \left(a + a^{-1}\right)\cos \psi_0 + \left(b + b^{-1}\right)\cos f(\psi_0) + i\left[\left(a - a^{-1}\right)\sin \psi_0 + \left(b - b^{-1}\right)\sin f(\psi_0)\right],\] which will imply that $P \in \Rh_{a, b}.$ Indeed, $g$ is a continuous function on a connected set, and $g(\xi) < g(\chi).$ Therefore, all the values of the interval $(g(\xi), g(\chi))$ are attained by $g.$ In particular, such $\psi_0$ exists. This proves the statement of the proposition when $\im(P) \in \left[0, a - a^{-1}\right]$ and $\re(P) \geq \re\left(C_{z''}\right).$

Therefore, it remains to consider the case when \ref{item:propII} holds along with $\re(P) \in \left[0,  \re\left(C_{z''}\right)\right).$ 

If the line $L_{\im(P)}$ intersect $e^{a, b}$ in $Q_{+, +},$ then we consider the intersection point with the smallest non-negative real part. In other words, if $K_1, \dots, K_l \in Q_{+,+} \cap e^{a, b} \cap  L_{\im(P)}$ are distinct with $0 \leq \re(K_1) < \dots < \re(K_l),$ then consider $\re(K_1).$ We want to show that, in fact there can be at most one intersection point of $L_{\im(P)}$ and $e^{a, b}$ in $Q_{+, +}.$ That is, if $\re(P) \in (\re(K_1), \re(C_{z''})),$ then $P \in \Rh_{a, b};$ but this follows from a similar argument as above. Therefore, it only remains to investigate the case when $L_{\im(P)}$ does not intersect $e^{a, b}$ in $Q_{+, +}.$ Let $K_0$ be the intersection point of $L_{\im(P)}$ and the imaginary axis. Then we have $\re(K_0) = 0,$ and $\re(P) \in (\re(K_0), \re(C_{z''})) \subset L_{\im(P)}.$ Again, an analogous argument as above implies that $P \in \Rh_{a, b}.$

We collect all the results above, and then using the symmetry of $\Rh_{a, b}$ (see \ref{item:propB}) we conclude that if $P \notin U_{a, b} \cup V_{a, b},$ then $P \in \Rh_{a, b},$ which completes the proof of the proposition for $a > b.$ The case $a = b$ follows from a similar argument.
\end{proof}

\subsubsection{\textbf{Application of Theorem \ref{constant} to the bounded component}}

Now we are ready to apply Theorem \ref{constant} to $V_{a, b},$ and evaluate $\m_{a, b}(Q_r).$ Firstly, we need to investigate the roots of $xQ_{r_0}(x, b).$ Since, $0 \in V_{a, b},$ we can choose $r_0 = 0$ in our theorem. In particular, we need to count the number of roots of $xQ_0(x, b)$ lying inside the circle $|x| = a.$ By Lemma \ref{Lem1}, we can also assume $a > b > 1.$

Factoring $xQ_0(x, b)$ in $\C[x],$ we obtain that \begin{equation*}
xQ_0(x, b) = x^2 + \left(b + \frac{1}{b}\right)x + 1 = \left(x + b\right)\left(x + \frac{1}{b}\right).
\end{equation*} Since $a > b > 1,$ both roots of $xQ_0(x, b))$ lies inside the circle $|x| = a.$ Also note that $Q_{F, 0}^x(y)$ and $Q_{f, 0}^x(y)$ in \eqref{expansn1} are equal to the constant function $\textbf{1}.$ Applying Theorem \ref{constant}, we have, for $a > b > 1$ and $r \in V_{a, b},$ \[\m_{a, b}(Q_r) = \nu_{a, b, 0}^1 \log a = \log a,\] where the last equality follows from the fact that \[\nu_{a, b, 0}^1 = Z_{a, b, 0}^1 - P_{a, b, 0}^1 = 2 - 1 = 1.\] Other cases, such as $b > a > 1,$ $a > 1 > b$ etc, follow from a combination Lemma \ref{Lem1} and a similar arguments as above.

\section{Generalized Mahler measure of $X + \frac{1}{X} + Y + \frac{1}{Y} + 4$}   

In this section, our goal is to provide a proof of Theorem \ref{k=4*}, and evaluate \[\m_{a, b}(Q_4):= \m_{a, b}(Q_4(x, y)) = \m_{a, b}\left(x + \frac{1}{x} + y + \frac{1}{y} + 4\right)\] for all $a, b > 0.$ 

Our method of proof is mostly inspired from the proof of Theorem $12$ in \cite{CKL19}. We apply the change of variables considered by Boyd (see Section $2$A in \cite{Bo2}), namely \[x \mapsto \frac{w}{z} \quad \mbox{and} \quad y \mapsto wz,\] to $Q_4 (x, y),$ and this yields that \begin{equation}\label{eq:CoV}
  P\left(w, z\right) = Q_4\left(\frac{w}{z}, wz \right) = \frac{1}{wz} \left( 1 + iw + iz + wz\right)\left( 1 - iw - iz + wz\right). 
\end{equation}

Since $\m_{a, b}(S(x, y)T(x, y)) = \m_{a, b}(S(x, y)) + \m_{a, b}(T(x, y)),$ it is sufficient to evaluate the Mahler measures of the linear polynomials $\left( 1 \pm iw \pm iz + wz\right)$ over $\TT_{c, d}^2 = \{ (w, z) \in \C^* \times \C^* : |w| = c, |z| = d\},$ where \[c = \sqrt{ab}, \ d = \sqrt{\frac{b}{a}}.\] Afterwards, using the changes of variables, we can evaluate $\m_{a, b}(Q_4).$ The changes of variables \[w \mapsto -w \quad \mbox{and} \quad z \mapsto -z\] transform $\left( 1 + iw + iz + wz\right)$ to $\left( 1 - iw - iz + wz\right).$ As these changes of variables preserve the Mahler measure, we find that \begin{align}
\m_{a, b}(Q_4) = \m_{c, d}(P(w, z)) =& \m_{c, d}\left(\frac{1}{wz}\right) + \m_{c, d}\left( 1 + iw + iz + wz\right) + \m_{c, d}\left( 1 - iw - iz + wz\right) \nnum \\ =& -\log cd + 2\m_{c, d}\left( 1 + iw + iz + wz\right) \nnum \\ =& -\log b + 2\m_{c, d}\left( 1 + iw + iz + wz\right), \label{eq:factor4}
\end{align} 
where the last equality follows from the fact that $cd = \sqrt{ab} \cdot \sqrt{\frac{b}{a}} = b.$ 

Among the terms in \eqref{eq:factor4}, it remains to evaluate \[\frac{1}{2}\left(\m_{c, d}(P) + \log cd\right) = \frac{1}{2}\left(\m_{a, b}(Q_4) + \log b\right) = \m_{c, d}(1+iw+iz + wz).\] 

Note that $z(w) = -\frac{1+iw}{i+w}$ is the only root of $R(w, z) = 1+iw+iz + wz,$ when considered as a polynomial in $z.$ Therefore, \begin{align}
\m_{c, d}(R(w, z)) &= \m_{c, d}(w + i) +  \m_{c, d}\left(z + \frac{1+iw}{i+w} \right) \nnum \\ &= \frac{1}{(2\pi i)^2}\int_{\TT_{c, d}^2}\log |w + i| \frac{dw}{w}\frac{dz}{z} +  \m_{c, d}\left(z + \frac{1+iw}{i+w} \right) \nnum \\ &= \frac{1}{2\pi i}\int_{|w| = c}\log |w + i| \frac{dw}{w} + \m_{c, d}\left(z + \frac{1+iw}{i+w} \right). \label{evalmaincd}
\end{align}
To evaluate the first integral, we apply the change of variables $w = cw'$ and Jensen's formula (see \eqref{jensen}) to obtain \begin{align}
    \frac{1}{2\pi i}\int_{|w| = c} \log|w+i| \frac{dw}{w} = \log c + \frac{1}{2\pi i}\int_{|w'| = 1} \log\left|w' + \frac{i}{c} \right| \frac{dw'}{w'} =& \left\{\begin{array}{ll} \log c \quad &\mbox{if} \ c > 1, \\ 0 \quad &\mbox{if} \ c \leq 1. \end{array}\right.  \label{intlog}
\end{align}

 It now suffices to evaluate \begin{align}
\m_{c, d}\left(z + \frac{1+iw}{i+w} \right) &= \frac{1}{(2\pi i)^2}\int_{\TT_{c, d}^2}\log \left|z +\frac{1+iw}{i+w} \right| \frac{dw}{w}\frac{dz}{z} \label{cdpart2} \\ &= \frac{1}{2\pi i}\int_{|w| = c}\left(\frac{1}{2\pi i}\int_{|z| = d}\log \left|z +\frac{1+iw}{i+w} \right| \frac{dz}{z}\right)\frac{dw}{w} \nnum
\end{align} to complete the proof. Note that $\frac{1}{2\pi i}\int_{|z| = d}\log \left|z + \frac{1+iw}{i+w} \right| \frac{dz}{z}$ can be simplified to \begin{equation}\label{simplify}
\frac{1}{2\pi i}\int_{|z| = d}\log \left|z +\frac{1+iw}{i+w} \right| \frac{dz}{z} =  \left\{\begin{array}{ll} \log  \left|\frac{1+iw}{i+w} \right| \quad &\mbox{if} \ \left|\frac{1+iw}{i+w} \right| > d, \\ \\ \log d \quad &\mbox{if} \ \left|\frac{1+iw}{i+w}\right| \leq d \end{array}\right.
\end{equation} following an application of Jensen's formula. 

Let $\gamma_{> d}$ and $\gamma_{\leq d}$ be the two collections of arcs defined by \[\gamma_{> d} = \{w: |w| = c, |z(w)| > d\}, \quad \quad \gamma_{\leq d} = \{w: |w| = c, |z(w)| \leq d\}.\]Then, applying Jensen's formula with respect to the variable $z,$ \eqref{cdpart2} can be expressed as \begin{align}
\m_{c, d}\left(z + \frac{1+iw}{i+w} \right) &= \frac{1}{2\pi i}\int_{|w| = c}\left(\frac{1}{2\pi i}\int_{|z| = d}\log \left|z + \frac{1+iw}{i+w}\right| \frac{dz}{z}\right)\frac{dw}{w} \nnum \\ &= \frac{1}{2\pi i}\int_{\gamma_{> d}}\log  \left|\frac{1+iw}{i+w} \right| \frac{dw}{w} + \frac{1}{2\pi i}\int_{\gamma_{\leq d}}\log d \frac{dw}{w}. \label{cdpart2.1}
\end{align}
Since $\im \left(\frac{dw}{w}\right) = d \arg w,$ the differential form can be represented in terms of $\eta$ as \[\log  \left|\frac{1+iw}{i+w} \right| \frac{dw}{w} = \log|z(w)|\frac{dw}{w} =-i\left(\eta(w, z(w)) - \eta(c, z(w))\right).\] The second term above can be further simplified to \[\eta(c, z(w)) = \eta(c, iz(w)) - \eta(c, i) = \eta(c, iz(w)) = (\log c)  d\arg \left(  \frac{1 + iw}{1 - iw} \right), \] where $iz(w) = i\frac{1+iw}{i+w} = \frac{1 + iw}{1-iw}.$ 
Therefore, once we have determined $\gamma_{> d}$ and $\gamma_{\geq d}$ explicitly, the integrals in \eqref{cdpart2.1} can be evaluated individually using the properties of $\eta$ and the following two lemmas.
\begin{lem}\label{lemma1.3}
For $w, z(w)$ mentioned above, $\eta\left(w, z(w)\right)$ decomposes as \[\eta(w, z(w)) = \eta\left(-iw, 1 + iw\right) - \eta\left(iw, 1 - iw\right).\] 
\end{lem}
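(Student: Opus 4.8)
The plan is to derive the decomposition purely from the formal properties of $\eta$ collected in Lemma~\ref{lem:eta}, together with Remark~\ref{remark2} (which gives $\eta(\nu,z)=0$ whenever $\nu$ is a constant with $|\nu|=1$). The preliminary step is to rewrite $z(w)$ so that the factors $1\pm iw$ appear explicitly. Since $i+w = i(1-iw)$, the root $z(w) = -\frac{1+iw}{i+w}$ of $R(w,z) = 1+iw+iz+wz$ is
\[
z(w) = -\frac{1+iw}{i(1-iw)} = i\,\frac{1+iw}{1-iw},
\]
and the unimodular scalar in front will play no role in what follows.

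Next I would expand $\eta(w,z(w))$ using the bilinearity and the vanishing on constants encoded in parts \ref{anttsym}, \ref{multpl}, \ref{constnt} of Lemma~\ref{lem:eta} (in particular $\eta(f,gh) = \eta(f,g)+\eta(f,h)$ and $\eta(f,h^{-1}) = -\eta(f,h)$). Writing $z(w) = i\,(1+iw)(1-iw)^{-1}$ gives
\[
\eta(w,z(w)) = \eta(w,i) + \eta(w,1+iw) - \eta(w,1-iw) = \eta(w,1+iw) - \eta(w,1-iw),
\]
where $\eta(w,i)=0$ by Remark~\ref{remark2} since $|i|=1$. Finally I would reinsert the constants $-i$ and $i$ into the first arguments: by the same multiplicativity and the vanishing of $\eta$ against a modulus-one constant,
\[
\eta(-iw,1+iw) = \eta(-i,1+iw) + \eta(w,1+iw) = \eta(w,1+iw), \qquad \eta(iw,1-iw) = \eta(w,1-iw),
\]
and substituting these two identities into the previous display yields exactly $\eta(w,z(w)) = \eta(-iw,1+iw) - \eta(iw,1-iw)$.

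This is an entirely formal manipulation, so there is no genuine obstacle here; the only points needing attention are the elementary identity $i+w = i(1-iw)$ and the bookkeeping of the scalars $i$, $-i$ of modulus $1$. The role of the lemma is organizational: since $1-(-iw) = 1+iw$ and $1-(iw) = 1-iw$, the right-hand side equals $\eta\bigl(-iw, 1-(-iw)\bigr) - \eta\bigl(iw, 1-(iw)\bigr)$, which by \eqref{eta(x,1-x)} is $dD(-iw) - dD(iw)$, the dilogarithm shape needed to integrate over the arcs $\gamma_{>d}$ in \eqref{cdpart2.1}.
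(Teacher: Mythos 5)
Your proposal is correct and follows essentially the same route as the paper: both arguments are purely formal manipulations using the multiplicativity and antisymmetry of $\eta$ together with its vanishing against unimodular constants (Remark \ref{remark2}), the only cosmetic difference being that you rewrite $i+w=i(1-iw)$ before expanding while the paper expands first and converts afterwards.
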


\begin{lem}[Lemma 16, \cite{CKL19}]\label{lemma1.4}
For $c \in \R_{> 0}$ and $\theta \in [-\pi, \pi),$ let $w = ce^{i\theta}$ and $\psi = \theta + \frac{\pi}{2}.$ Then \[ d \arg \left(  \frac{1 + iw}{1 - iw} \right) = \frac{2\left(c^{-1} - c\right)\cos\psi }{\left(c^{-1} - c\right)^2 + 4\sin^2\psi} d \psi.\]
\end{lem}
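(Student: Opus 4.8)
The plan is to differentiate the single logarithm $\log\!\left(\frac{1+iw}{1-iw}\right)$ directly, rather than splitting the argument into two separate arctangents. Recalling the relation $d\arg x = \im\!\left(\frac{dx}{x}\right)$ noted after \eqref{eq:eta}, the first step is the substitution $iw = ce^{i\psi}$: since $w = ce^{i\theta}$ and $i = e^{i\pi/2}$, we have $iw = ce^{i(\theta+\pi/2)} = ce^{i\psi}$, so setting $u := iw = ce^{i\psi}$ turns the problem into computing $d\arg\!\left(\frac{1+u}{1-u}\right)$ as $u$ traverses the circle of radius $c$.

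The core computation is the logarithmic differential
\[
d\log\!\left(\frac{1+u}{1-u}\right) = \frac{du}{1+u} + \frac{du}{1-u} = \frac{2\,du}{1-u^2}.
\]
Using $du = iu\,d\psi$, this gives $d\arg\!\left(\frac{1+iw}{1-iw}\right) = \im\!\left(\frac{2iu}{1-u^2}\right)d\psi$. To extract the imaginary part I would write $\frac{2iu}{1-u^2} = \frac{2ice^{i\psi}}{1-c^2e^{2i\psi}}$ and multiply numerator and denominator by the conjugate $1 - c^2e^{-2i\psi}$ of the denominator. The denominator becomes $\left|1 - c^2e^{2i\psi}\right|^2 = 1 - 2c^2\cos 2\psi + c^4$, while expanding the numerator $2ice^{i\psi}(1 - c^2e^{-2i\psi})$ into real and imaginary parts isolates the imaginary component as $2c(1-c^2)\cos\psi$.

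It then remains to reconcile
\[
d\arg\!\left(\frac{1+iw}{1-iw}\right) = \frac{2c(1-c^2)\cos\psi}{1 - 2c^2\cos 2\psi + c^4}\,d\psi
\]
with the claimed form. Applying the identity $\cos 2\psi = 1 - 2\sin^2\psi$ rewrites the denominator as $(1-c^2)^2 + 4c^2\sin^2\psi$, and dividing numerator and denominator by $c^2$ (using $\frac{1-c^2}{c} = c^{-1}-c$) produces exactly $\frac{2(c^{-1}-c)\cos\psi}{(c^{-1}-c)^2 + 4\sin^2\psi}$, as required. The argument is elementary throughout; the only step demanding care is this final algebraic matching, where one must correctly apply the double-angle identity and the rescaling by $c^2$ to see that the two superficially different rational expressions coincide. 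I do not anticipate any genuine obstacle, since the identity reduces to differentiating a one-variable logarithm along the circle $|w| = c$.
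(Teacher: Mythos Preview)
Your computation is correct. Note, however, that the paper does not actually prove Lemma~\ref{lemma1.4}: it is quoted verbatim from \cite{CKL19} and the proof is explicitly omitted (``We omit the proof of Lemma~\ref{lemma1.4} since it is an intermediate step in the Lemma~16 of \cite{CKL19}''). So there is no ``paper's own proof'' to compare against; your direct differentiation of $\log\!\left(\frac{1+u}{1-u}\right)$ along the circle $|u|=c$, followed by the double-angle simplification and the rescaling by $c^{2}$, supplies a clean self-contained argument where the paper simply cites the external reference.
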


Using property \eqref{eta(x,1-x)} of $\eta$ we can rewrite $\eta(w, z(w))$ in Lemma \ref{lemma1.3} as \begin{equation}\label{etaD}
\eta(w, z(w)) = d D\left(-iw\right) - d D \left(iw\right),
\end{equation} where $D$ is the Bloch--Wigner dilogarithm given in \eqref{Bloch-Wigner}.

The evaluation of the remaining integral involving $\eta(c, z(w))$ ($= \log c \ d \arg \left(  \frac{1 + iw}{1 - iw} \right)$) over the integration path $\gamma_{> d}$  follows from the lemma below.   

\begin{lem}\label{lemma1.5}
For $c \in \R_{> 0}$ and $\theta \in [-\pi, \pi),$ let $w = ce^{i\theta}.$ Let $\alpha, \beta \in [-\pi, \pi).$ Then \[\int_{w(\alpha)}^{w(\beta)}  d \arg \left(  \frac{1 + iw}{1 - iw} \right)  =  \tan^{-1}\left(\frac{2\cos\alpha}{c - c^{-1}}\right) - \tan^{-1} \left(\frac{2\cos\beta}{c - c^{-1}}\right),\] where $w(\alpha) = ce^{i\alpha}$ and $w(\beta) = ce^{i\beta}.$
\end{lem}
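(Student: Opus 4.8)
The plan is to prove Lemma~\ref{lemma1.5} by a direct computation building on Lemma~\ref{lemma1.4}. First I would recall from Lemma~\ref{lemma1.4} that, writing $w = ce^{i\theta}$ and $\psi = \theta + \tfrac{\pi}{2}$, we have
\[
d\arg\left(\frac{1+iw}{1-iw}\right) = \frac{2(c^{-1}-c)\cos\psi}{(c^{-1}-c)^2 + 4\sin^2\psi}\,d\psi.
\]
So the integral over the arc from $w(\alpha) = ce^{i\alpha}$ to $w(\beta) = ce^{i\beta}$ becomes, after the substitution $\psi = \theta + \tfrac{\pi}{2}$,
\[
\int_{w(\alpha)}^{w(\beta)} d\arg\left(\frac{1+iw}{1-iw}\right) = \int_{\alpha + \pi/2}^{\beta + \pi/2} \frac{2(c^{-1}-c)\cos\psi}{(c^{-1}-c)^2 + 4\sin^2\psi}\,d\psi.
\]

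Next I would evaluate the antiderivative. Setting $u = \sin\psi$, so that $du = \cos\psi\, d\psi$, the integrand becomes $\dfrac{2(c^{-1}-c)}{(c^{-1}-c)^2 + 4u^2}\,du$, whose antiderivative is $\tan^{-1}\!\left(\dfrac{2u}{c^{-1}-c}\right)$ (using $\int \frac{dk}{k^2 + s^2} = \frac1s\tan^{-1}\frac{k}{s}$ with $s = c^{-1}-c$, assuming $c \neq 1$; the case $c=1$ gives a vanishing differential and is trivial). Hence the integral equals
\[
\tan^{-1}\!\left(\frac{2\sin(\beta + \pi/2)}{c^{-1}-c}\right) - \tan^{-1}\!\left(\frac{2\sin(\alpha + \pi/2)}{c^{-1}-c}\right) = \tan^{-1}\!\left(\frac{2\cos\beta}{c^{-1}-c}\right) - \tan^{-1}\!\left(\frac{2\cos\alpha}{c^{-1}-c}\right),
\]
since $\sin(\theta + \pi/2) = \cos\theta$. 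Finally, using that $\tan^{-1}$ is odd and $c^{-1} - c = -(c - c^{-1})$, this rewrites as $\tan^{-1}\!\left(\dfrac{2\cos\alpha}{c - c^{-1}}\right) - \tan^{-1}\!\left(\dfrac{2\cos\beta}{c - c^{-1}}\right)$, which is the claimed identity.

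The only genuinely delicate point is the branch/continuity issue: $d\arg\left(\frac{1+iw}{1-iw}\right)$ is a well-defined closed $1$-form only away from the points where $\frac{1+iw}{1-iw}$ vanishes or blows up (i.e.\ $w = \pm i$), and $\tan^{-1}$ has range $(-\tfrac\pi2,\tfrac\pi2)$, so one must check that the arc from $w(\alpha)$ to $w(\beta)$ stays in a region where the naive antiderivative $\psi \mapsto \tan^{-1}\!\left(\frac{2\cos\psi}{c^{-1}-c}\right)$ is actually continuous (no jump of $\pi$ as $\cos\psi$ changes sign while the arctangent argument passes through $\pm\infty$ — which does not happen here since $c \neq 1$ keeps the denominator bounded away from $0$, so the argument of $\tan^{-1}$ stays finite). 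In the application the relevant arcs $\gamma_{>d}$ are subarcs of $|w| = c$ avoiding $w = \pm i$, so this is automatic; I would note this and otherwise treat the computation as routine. The antiderivative computation itself is the main (and only moderately involved) step, and it is elementary.
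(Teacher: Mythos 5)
Your proposal is correct and follows essentially the same route as the paper: both apply Lemma~\ref{lemma1.4}, substitute $t = \sin\psi$ (so the endpoints become $\cos\alpha$ and $\cos\beta$), and evaluate the resulting arctangent integral, treating $c=1$ as a degenerate/limiting case. Your extra remark on the continuity of the antiderivative for $c \neq 1$ is a harmless addition not present in the paper.
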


We omit the proof of Lemma \ref{lemma1.4} since it is an intermediate step in Lemma $16$ of \cite{CKL19}. We should also remark that Lemma \ref{lemma1.5} is a generalized version of Lemma $16$ in \cite{CKL19}, which states the above result for the case $\alpha = -\pi$ and $\beta = 0.$ We will see later that the proof of Lemma \ref{lemma1.5} also follows from an argument similar to the proof in \cite{CKL19}. We now provide the proofs of Lemma \ref{lemma1.3} and \ref{lemma1.5}.

\begin{proof}[\textbf{Proof of Lemma \ref{lemma1.3}}]
Using properties of $\eta$ in Lemma \ref{lem:eta}, $\eta (w, z(w))$ decomposes as \begin{align*}
\eta(w, z(w)) &= \eta \left(w, \frac{1+iw}{i+w}\right) \\ &= \eta (w, 1 + iw) - \eta(w, i + w) \\ &= \eta(-iw, 1 + iw) - \eta(-i, 1 + iw) - \eta(iw, i + w) + \eta(i, i + w) \\ &= \eta(-iw, 1+iw) - \eta(iw, 1 - iw) - \eta(iw, i) \\ &= \eta(-iw, 1 + iw) - \eta(iw, 1-iw),
\end{align*} where we applied Remark \ref{remark2}, which implies that $\eta(\zeta, f(w)) = 0 = \eta(f(w), \zeta)$ for any root of unity $\zeta$ and any function $f(w)$ of $w.$  
\end{proof}

 \begin{proof}[\textbf{Proof of Lemma \ref{lemma1.5}}:] We first assume that $c \in \R_{> 0} \setminus \{1\},$ and the case $c = 1$ follows from a continuity argument. 
 
For $\alpha, \beta \in [-\pi, \pi)$ and $w = ce^{i\theta},$ Lemma \ref{lemma1.4} yields that \[\int_{w(\alpha)}^{w(\beta)}  d \arg \left(  \frac{1 + iw}{1 - iw} \right)  = \int_{\alpha + \frac{\pi}{2}}^{\beta + \frac{\pi}{2}} \frac{2\left(c^{-1} - c\right)\cos\psi }{\left(c^{-1} - c\right)^2 + 4\sin^2\psi} d \psi = - \int_{\cos \alpha}^{\cos \beta} \frac{2(c - c^{-1})}{(c - c^{-1})^2 + 4t^2}dt, \] where $\psi = \theta + \frac{\pi}{2},$ $w(\phi) = ce^{i\phi},$ and the last equality follows from the change of variables $\sin \psi \mapsto t.$

Further, the change of variables $\frac{2t}{c - c^{-1}} \mapsto u$ gives that \begin{equation*}
\int_{w(\alpha)}^{w(\beta)}  d \arg \left(  \frac{1 + iw}{1 - iw} \right) = - \int_{\frac{2 \cos \alpha}{c - c^{-1}}}^{\frac{2 \cos \beta}{c - c^{-1}}} \frac{d u}{1 + u^2} = \tan^{-1}\left(\frac{2\cos\alpha}{c - c^{-1}}\right) - \tan^{-1} \left(\frac{2\cos\beta}{c - c^{-1}}\right),
\end{equation*} which proves the lemma.
\end{proof}

Now we have everything to complete the proof of Theorem \ref{k=4*}.

\begin{proof}[\textbf{Proof of Theorem \ref{k=4*}}] In order to apply Lemma \ref{lemma1.3} and Lemma \ref{lemma1.4} to \eqref{cdpart2.1}, it is necessary to explicitly express $\gamma_{\leq d}$ and $\gamma_{> d}.$  Since $\gamma_{> d}$ and $\gamma_{\leq d}$ are disjoint, and \[\{w : |w| = c\} = \gamma_{> d} \cup \gamma_{\leq d},\] it suffices to understand $\gamma_{> d}.$

Recall that $z(w) = -\frac{1+iw}{i+w}.$ Then, $|z(w)| > d \Leftrightarrow \left|\frac{1+iw}{i+w}\right| > d \Leftrightarrow \left|1 + iw\right| > d\left|i+w\right|.$ Since both sides of the inequality are non-negative, we can square them and get \begin{align*}
\left|1 + iw\right| > d\left|i+w\right| &\Leftrightarrow \left|1 + iw\right|^2 > d^2\left|i+w\right|^2 \\  &\Leftrightarrow 2 (1 + d^2)\Re(iw) > (d^2 - 1)(1 + |w|^2) \\ &\Leftrightarrow \Re(ie^{i\theta}) > \frac{d^2 - 1}{1 + d^2}\cdot \frac{1+c^2}{2c},
\end{align*}
where the last inequality follows from the fact that $w = ce^{i\theta}$ for $\theta \in [-\pi, \pi).$ In other words, the condition $|z(w)| > d$ is equivalent to the condition \begin{equation}\label{condition}
-1 \leq - \re(ie^{i\theta}) = \sin \theta < \frac{1 - d^2}{1 + d^2}\cdot \frac{1+c^2}{2c}, 
\end{equation} with $\theta \in [-\pi, \pi).$  For simplicity we denote \[\mathcal{A}_{c, d} :=  \frac{1 - d^2}{1 + d^2}\cdot \frac{1+c^2}{2c}.\] As $|\sin \theta| \leq 1,$ there are three cases to consider. \begin{itemize}
\item[\hypertarget{label}{\textbf{Case 1}}:] If $\mathcal{A}_{c, d} \leq -1,$ then $\gamma_{> d} = \varnothing \quad \mbox{and} \quad \gamma_{\leq d} = \{w: |w| = c\}.$

\medskip

\item[\hypertarget{label}{\textbf{Case 2}}:] If $\mathcal{A}_{c, d} \geq 1,$ then $\gamma_{> d} = \{w: |w| = c\}  \quad \mbox{and} \quad \gamma_{\leq d} = \varnothing.$

\medskip

\item[\hypertarget{label}{\textbf{Case 3}}:] If $\left|\mathcal{A}_{c, d}\right| < 1,$ then, for $w = ce^{i\theta}$ with $\theta \in [-\pi, \pi),$ \newline $\gamma_{> d} = \left\{w: |w|= c, -1 \leq \sin \theta < \mathcal{A}_{c, d}\right\}$ and $\gamma_{\leq d} = \left\{w: |w|= c, \mathcal{A}_{c, d} \leq  \sin \theta \leq 1\right\}.$
\end{itemize}

Now we have everything needed to evaluate \eqref{cdpart2.1}, namely \[\m_{c, d}\left(z + \frac{1+iw}{i+w} \right) = \frac{1}{2\pi i}\int_{\gamma_{> d}}\log  \left|\frac{1+iw}{i+w} \right| \frac{dw}{w} + \frac{1}{2\pi i}\int_{\gamma_{\leq d}}\log d \frac{dw}{w}.\]

\hyperlink{label}{\textbf{Case 1}}: Since $\gamma_{> d} = \varnothing,$ the integrals in \eqref{cdpart2.1} can be evaluated individually to obtain \[\frac{1}{2\pi i}\int_{\gamma_{> d}}\log  \left|\frac{1+iw}{i+w} \right| \frac{dw}{w} = 0, \quad \mbox{and} \quad \frac{1}{2\pi i}\int_{\gamma_{\leq d}}\log d \frac{dw}{w} = \frac{1}{2\pi i}\int_{|w| = c}\log d \frac{dw}{w} = \log d.\] Therefore, in this case we have \begin{equation}\label{4case1}
\m_{c, d}\left(z + \frac{1+iw}{i+w} \right) = \log d.
\end{equation} 

\medskip

\hyperlink{label}{\textbf{Case 2}}: Since $\gamma_{\leq d} = \varnothing,$ the second integral in \eqref{cdpart2.1} contributes nothing. However the first integral can be decomposed into simpler integrals, i.e. \begin{align}
\frac{1}{2\pi i}\int_{\gamma_{> d}}\log  \left|\frac{1+iw}{i+w} \right| \frac{dw}{w} &= \frac{1}{2\pi i}\int_{|w| = c}\log  \left|\frac{1+iw}{i+w} \right| \frac{dw}{w} \nnum \\ &= \frac{1}{2\pi i}\int_{|w| = c}\log  \left|1 +  iw \right| \frac{dw}{w} - \frac{1}{2\pi i}\int_{|w| = c}\log  \left|i + w \right| \frac{dw}{w} \nnum  &= 0. \nnum
\end{align} 
Therefore, when $\gamma_{> d} = \{|w| = c\}$ (and $\gamma_{ \leq d} = \varnothing$), then \begin{equation}\label{4case2}
\m_{c, d}\left(z + \frac{1+iw}{i+w} \right) = 0.
\end{equation}

\medskip

\hyperlink{label}{\textbf{Case 3}}: Since \[\left|\mathcal{A}_{c, d}\right| < 1,\]  we have two sub-cases to consider. 

\hypertarget{label}{\textbf{\textsc{3a}}}: When \[ -1 < \mathcal{A}_{c, d} < 0,\] then $\sin^{-1}\left(\mathcal{A}_{c, d}\right) \in [-\pi, 0).$ For simplicity, we denote $ \tau = \sin^{-1}\left(\mathcal{A}_{c, d}\right)$ such that $\tau \in \left(-\frac{\pi}{2}, 0\right).$ Note that $\sin \tau = \sin (-\pi - \tau).$ Then the boundary values of $\gamma_{> d}$ are \[\partial \gamma_{> d} = \{w(-\pi - \tau), w(\tau)\} = \{ce^{i(-\pi - \tau)}, ce^{i\tau}\} = \{-ce^{-i\tau}, ce^{i\tau}\},\] where $w(\theta) = ce^{i\theta}.$ The integration path $\gamma_{\leq d}$ is then the union of the arcs joining $w(-\pi)$ and  $w(-\pi - \tau),$ and joining $w(\tau)$ and $w(\pi).$ Therefore \[\partial \gamma_{\leq d} = \{w(-\pi), w(-\pi - \tau), w(\tau), w(\pi)\}\] are the boundary values of $\gamma_{\leq d}.$ All the paths are assumed to be traversed counter-clockwise. Now, we have all the tools to calculate \eqref{cdpart2.1} in this case. Combining Lemma \ref{lemma1.3} and \eqref{etaD} with the above discussion we obtain \begin{align}
\m_{c, d}\left(z + \frac{1+iw}{i+w} \right) =& \frac{1}{2\pi i}\int_{\gamma_{> d}}\log  \left|\frac{1+iw}{i+w} \right| \frac{dw}{w} + \frac{1}{2\pi i}\int_{\gamma_{\leq d}}\log d \frac{dw}{w} \nnum \\ =& -\frac{1}{2\pi}\int_{\gamma_{> d}} \eta(w, z(w)) + \frac{1}{2\pi} \int_{\gamma_{> d}} \eta(c, z(w)) + \frac{1}{2\pi i}\int_{\gamma_{\leq d}}\log d \frac{dw}{w} \nnum \\ =&  -\frac{1}{2\pi}\int_{\gamma_{> d}} (dD(-iw) - dD(iw))  \nnum \\  &+ \frac{\log c}{2\pi}\int_{w(-\pi - \tau)}^{w(\tau)}  d \arg \left(  \frac{1 + iw}{1 - iw} \right) + \frac{\log d}{2\pi}\left(\int_{-\pi}^{-\pi -\tau} + \int_{\tau}^{\pi}\right) d\theta, \label{evalcd3a} 
\end{align} where the simplification of the last integral follows from the above discussion regarding $\partial \gamma_{\leq d}$ and substituting $w$ with $w(\theta) = ce^{i\theta}.$ The first integral in \eqref{evalcd3a} can be evaluated using Stokes' theorem as \begin{align}
\frac{1}{2\pi}\int_{\gamma_{> d}} (dD(-iw) - dD(iw)) =&  \frac{1}{2\pi}\left[D(-iw) - D(iw)\right]_{\partial \gamma_{> d}} \nnum \\ =&  \frac{1}{2\pi}\left[D(-iw) - D(iw)\right]_{w(-\pi - \tau)}^{w(\tau)} \nnum \\ =& -\frac{1}{\pi}\left( D(ice^{-i\tau}) + D(ice^{i\tau})\right), \label{evalcd3a1}
\end{align} where the last equality follows from the property \eqref{eq:relations} of the Bloch-Wigner dilogarithm. 
 
Substituting $\alpha = -\pi - \tau$ and $\beta = \tau$ in the statement of Lemma \ref{lemma1.5}, we evaluate the second integral in \eqref{evalcd3a}: \begin{equation}\label{evalcd3a2}
\frac{\log c}{2\pi}\int_{w(-\pi - \tau)}^{w(\tau)}  d \arg \left(  \frac{1 + iw}{1 - iw} \right) = - \frac{\log c}{\pi} \tan^{-1} \left(\frac{2\cos\tau}{c - c^{-1}}\right).
\end{equation} 
The remaining integral's contribution is \begin{equation}\label{evalcd3a3}
\frac{\log d}{2\pi}\left(\int_{-\pi}^{-\pi -\tau} + \int_{\tau}^{\pi}\right) d\theta = \frac{\log d}{2\pi} \left[-\pi - \tau + \pi + \pi - \tau\right] = \frac{\pi - 2\tau}{2\pi} \log d.
\end{equation} 
Then \eqref{evalcd3a1}, \eqref{evalcd3a2} and \eqref{evalcd3a3} together yield that \begin{equation*}
\m_{c, d}\left(z + \frac{1+iw}{i+w} \right) = \frac{1}{\pi}\left[ D(ice^{-i\tau}) + D(ice^{i\tau})- (\log c) \tan^{-1} \left(\frac{2\cos\tau}{c - c^{-1}}\right) + \left(\frac{\pi}{2} - \tau\right)\log d\right].
\end{equation*}

\medskip

\hypertarget{label}{\textbf{\textsc{3b}}}: It remains to evaluate the case when \[0 < \mathcal{A}_{c, d} < 1.\] This condition is equivalent to \[\sin^{-1}\left(\mathcal{A}_{c, d}\right) \in (0, \pi).\] Again, for simplicity, we denote $\kappa = \sin^{-1}\left(\mathcal{A}_{c, d}\right)$ such that $\kappa \in \left(0, \frac{\pi}{2}\right).$ Since $\sin \kappa = \sin (\pi - \kappa)$ and $\sin \pi = 0,$ the boundary values in this case are \[\partial \gamma_{> d} = \{w(-\pi), w(\kappa), w(\pi - \kappa), w(\pi)\},\] and \[\partial \gamma_{\leq d} = \{w(\kappa), w(\pi - \kappa)\}.\] The arcs are considered to be oriented in a counter-clockwise direction. From a similar argument as before, we deduce that \begin{equation*}
\m_{c, d}\left(z + \frac{1+iw}{i+w} \right) = \frac{1}{\pi}\left[ D(ice^{-i\kappa}) + D(ice^{i\kappa})- (\log c) \tan^{-1} \left(\frac{2\cos\kappa}{c - c^{-1}}\right) + \left(\frac{\pi}{2} - \kappa\right)\log d\right].
\end{equation*}

 We combine the results obtained in \textbf{\textsc{3a}} and \textbf{\textsc{3b}} to obtain  \begin{equation}\label{4case3}
 \m_{c, d}\left(z + \frac{1+iw}{i+w} \right) = \frac{1}{\pi}\left[ D(ice^{-i\mu}) + D(ice^{i\mu})- (\log c) \tan^{-1} \left(\frac{2\cos\mu}{c - c^{-1}}\right) + \left(\frac{\pi}{2} - \mu\right)\log d \right],
 \end{equation}
 where $\mu =  \sin^{-1}\left(\mathcal{A}_{c, d}\right) \in \left(-\frac{\pi}{2}, \frac{\pi}{2}\right).$ This concludes the evaluation of $\m_{c, d}\left(z + \frac{1+iw}{i+w} \right)$ for the different cases.

Recall that $R(w, z) = 1 + iw + iz + wz.$ In order to evaluate $\m_{c, d}(R(w, z)),$ it suffices to collect the equalities in \eqref{intlog}, \eqref{4case1}, \eqref{4case2} and \eqref{4case3}. We deduce \[\m_{c, d}(R(w, z)) = \max\{\log c, 0\} + \left\{\begin{array}{ll} \max\{\log d, 0\}  \quad &\mbox{if} \ \left|\mathcal{A}_{c, d}\right| \geq 1, \\ \\ \frac{1}{\pi}\left[ D(ice^{-i\mu}) + D(ice^{i\mu}) \right. \\  \left. - (\log c) \tan^{-1} \left(\frac{2\cos\mu}{c - c^{-1}}\right)+ \left(\frac{\pi}{2} - \mu\right)\log d \right] \quad &\mbox{if} \ \left|\mathcal{A}_{c, d}\right| < 1, \end{array}\right.,\] where $\mu =  \sin^{-1}\left(\mathcal{A}_{c, d}\right) \in \left(-\frac{\pi}{2}, \frac{\pi}{2}\right).$ Notice that $\m_{c, d}(R(w, z)) = \frac{1}{2}\left(\m_{a, b}(Q_4) + \log b\right),$ where $a = \frac{c}{d}$ and $b = cd.$ This implies that when $\left|\mathcal{A}_{c, d}\right| \geq 1,$ we have \begin{equation}\label{grt1}
\m_{a, b}(Q_4) = \max\{\log c, -\log c\} + \max\{\log d, -\log d\}.
\end{equation} On the other hand, when $\left|\mathcal{A}_{c, d}\right| < 1,$ \begin{align*}
\m_{a, b}(Q_4) =& \max\{\log c, -\log c\} - \log d + \frac{2}{\pi}\left[ D(ice^{-i\mu}) + D(ice^{i\mu}) \right] \\ &- \frac{2\log c}{\pi}\tan^{-1} \left(\frac{2\cos\mu}{c - c^{-1}}\right) + \left(1 - \frac{2\mu}{\pi}\right)\log d  \\ =& \frac{2}{\pi}\left[ D(ice^{-i\mu}) + D(ice^{i\mu}) - \mu \log d\right] + \frac{2\log c}{\pi}\left[\max\left\{\frac{\pi}{2}, -\frac{\pi}{2}\right\} - \tan^{-1} \left(\frac{2\cos\mu}{c - c^{-1}}\right)\right] \\ =& \frac{2}{\pi}\left[ D(ice^{-i\mu}) + D(ice^{i\mu}) - \mu \log d\right] + \frac{2\log c}{\pi}\tan^{-1}\left(\frac{c - c^{-1}}{2\cos \mu}\right),
\end{align*} where $\mu = \sin^{-1}\left(\mathcal{A}_{c, d} \right) \in \left(-\frac{\pi}{2}, \frac{\pi}{2}\right),$ and the simplification of the last term follows from the fact that  
\[\mbox{if} \ x > 0, \quad \pi/2 - \tan^{-1}(x) = \tan^{-1}\left(x^{-1}\right),\] and \[\mbox{if} \ x < 0, \quad -\pi/2 - \tan^{-1}(x) = \tan^{-1}\left(x^{-1}\right).\] Therefore, \eqref{grt1} along with the above discussion implies that \begin{equation*}
\m_{a, b}(Q_4) = \left\{\begin{array}{ll} \max\{\log c, -\log c\} + \max\{\log d, -\log d\} \quad &\mbox{if} \ \left|\mathcal{A}_{c, d}\right| \geq 1, \\ \\ \frac{2}{\pi}\left[ D(ice^{-i\mu}) + D(ice^{i\mu}) - \mu \log d + (\log c) \tan^{-1}\left(\frac{c - c^{-1}}{2\cos \mu}\right)\right] \quad &\mbox{if} \ \left|\mathcal{A}_{c, d}\right| < 1, \end{array}\right.
\end{equation*} where $\mu = \sin^{-1}\left(\mathcal{A}_{c, d} \right) \in \left(-\frac{\pi}{2}, \frac{\pi}{2}\right),$ which completes the proof of Theorem \ref{k=4*}.
\end{proof}

\section{Conclusion}

 There are several directions for further exploration. The most immediate question one can ask is how to evaluate $\m_{a, b}(Q_r)$ when $r \in \Rh_{a, b}.$ A primary observation in this case is that the integration path is not necessarily closed (and in most cases it is not). This turns out to be a challenging problem since the integration path in this case cannot be easily identified as a cycle in the homology group. We face a similar obstacle while evaluating the Mahler measure of $Q_r(x, y)$ on the bounded connected components when the number of roots of $Q_r(a, y)$ (counting multiplicity) (or $Q_{r}(x, b)$) inside $\TT_{b}^1$ (or $\TT_{a}^1$) is strictly less than the degree of the polynomials. In this situation it is frequently required to integrate the algebraic functions coming from the factorisation of $Q_r(x, y)$ (when considered as a polynomial in either $x$ or $y$), on paths which are not closed. These similar challenges also extend to the $n$-variable cases when $n \geq 3.$
 
 A different direction would be to consider the family of rational polynomials \[P_{k}(x_1, \dots, x_n) = k - \frac{P(x_1, \dots, x_n)}{Q(x_1, \dots, x_n)} \in \C(x_1, \dots, x_n), \quad \quad \mbox{for} \ k \in \C.\] Our method of proof for Theorems \ref{mainthm} and \ref{constant} extends to this type of rational polynomials when $Q(x_1, \dots, x_n)$ is a monomial, which essentially recovers Theorems \ref{nvarthm} and \ref{nvarconstant}. 
 
 The expression of $\nu_{\mathfrak{a}, k}^j$ in \eqref{nvarnu} appears in the work of Forsberg, Passare, and Tsikh \cite{FPT}, where it is denoted as the \textit{order} of an element in the complement of the \textit{Amoeba} associated to the respected polynomial. Our theorems also re-establish certain properties of the \textit{Ronkin function} associated to amoebas mentioned in \cite{FPT}. Therefore, it would be also natural to explore the generalized Mahler measure in terms of the Ronkin function associated to amoebas in more depth.

\bibliographystyle{abbrv}

\bibliography{References}

\end{document}